\numberwithin{equation}{section}
\definecolor{orange}{RGB}{0,0,0}
\definecolor{blue}{RGB}{0,0,0}
\def\Card{\mathrm{Card\, }}
\def\exp{\mathrm{exp}}
\def\loc{\mathrm{loc}}
\def\R{\mathbb R}
\def\N{\mathbb N}
\def\Z{\mathbb Z}
\def\D{\mathbb D}
\def\C{\mathbb C}
\def\loc{\mathrm{loc}}
\def\cl{\mathrm{cl}}
\def\supp{\mathrm{supp}\:}
\def\qand{\quad \text{and} \quad}
\def\1{{1\!\! 1}}
\theoremstyle{plain}
\newtheorem{thm}{\bf Theorem}[section]
\newtheorem{theorem}[thm]{\bf Theorem}
\newtheorem*{conjecture*}{\bf Conjecture}
\newtheorem{problem}[thm]{\bf Problem}
\newtheorem{proposition}[thm]{\bf Proposition}
\newtheorem{corollary}[thm]{\bf Corollary}
\newtheorem{lemma}[thm]{\bf Lemma}
\newtheorem*{Takens prbm}{Takens' Last Problem} 
\newtheorem{remark}[thm]{\bf Remark}
\newtheorem{fact}[thm]{\bf Fact}
\newtheorem{definition}[thm]{\bf Definition}
\newtheorem*{definition*}{\bf Definition}
\newtheorem*{question*}{\bf Question}
\newtheorem{theo}{\bf Theorem}
\newtheorem{example}[thm]{\bf Example}
\renewcommand*{\backref}[1]{}
\renewcommand*{\backrefalt}[4]{\quad \tiny 
  \ifcase #1 (\textbf{NOT CITED.})%
  \or    (Cited on page~#2.)%
  \else   (Cited on pages~#2.)%
  \fi}
\newcommand{\supess}{ \mathrm{ess \ sup\,  }}
\newcommand{\infess}{ \mathrm{ess \ inf\,  }}
\def \Scl{\mathsf{scl}}
\def \scl{\mathrm{scl}}
\def \Card{\mathrm{Card \, }}
\def \dim{\mathsf{dim}}
\def \ord{\mathsf{ord}}
\def \R{\mathbb{R}}
\def \N{\mathbb{N}}
\begin{document}
\selectlanguage{english} 
 \title{Scales}
\author{Mathieu Helfter \footnote{Partially supported by the ERC project 818737 \textit{Emergence on wild differentiable dynamical systems}} }
\date{\today}
\maketitle
\begin{abstract}
We introduce the notions of \emph{scale} for sets and measures on metric space
by generalizing the usual notions of dimension. Several versions of scales are introduced such as \emph{Hausdorff, packing, box, local and quantization}. They are defined for different \emph{growth}, allowing a refined study of infinite dimensional spaces. We prove general theorems comparing the different versions of scales. They are applied to describe geometries of ergodic decompositions, of the Wiener measure and from functional spaces. The first application solves a problem of Berger on the notions of emergence (2020); the second lies in the geometry of the Wiener measure and extends the work of Dereich-Lifshits (2005); the last refines Kolmogorov-Tikhomirov (1958) study on finitely differentiable functions. 
\end{abstract}
\tableofcontents
\section{Introduction and results} 
 Dimension theory was popularized by Mandelbrot in the article \emph{How long is the coast of Britain ?} \cite{mandelbrot1967long} and shed light on the general problem of measuring how large a natural object is.
 The category of objects considered are metric spaces possibly endowed with a measure. Dimension theory encompasses not only smooth spaces such as manifolds, but also wild spaces such as fractals, so that the dimension may be any non-negative real number. There are several notions of dimension: for instance Hausdorff \cite{hausdorff1918dimension}, packing \cite{tricot1982two} or box dimensions \cite{bouligand1928ensembles}. Also, when the space is endowed with a measure, there are moreover the local and the quantization dimensions. These different versions of dimension are bi-Lipschitz invariants. They are in general not equal, so that they reveal different aspects of the underlying space. Seminal works by Hausdorff, Frostman, Tricot, Fan, Tamashiro, P{\"o}tzelberger, Graf-Luschgy, and Dereich-Lifshits have described the relationships between these notions and provided conditions under which they coincide.

Obviously, these invariants do not give much information on infinite dimensional spaces. However, such spaces are the subject of many studies. As motivations, Kolmogorov-Tikhomirov in  \cite{tikhomirov1993varepsilon}  gave asymptotics of  the covering numbers of some functional spaces. Dereich-Lifshits gave asymptotics of the mass of the small balls for the Wiener measure and exhibited their relationship with the quantization problem, see \cite{dereich2003link,dereichproba05,Cameron1944TheWM,chung1947maximum,baldi1992some,kuelbs1993metric}. Also Berger and Bochi \cite{berger2020complexities} gave estimates on the covering number and quantization number of the ergodic decomposition of some smooth dynamical systems. See also \cite{baldi1992some,kloeckner2015geometric,berger2021emergence}. 

These results lead to the following:

\begin{question*}
Are there infinite-dimensional counterparts of the various versions of dimension that maintain similar relationships ?
\end{question*}
To address this question, we introduce  
the notion of \emph{scale}.
The key idea involves considering a \emph{scaling}, which is a one-parameter family of gauge functions that satisfies mild assumptions, dictating at which 'scale' the size of the space is examined. For instance the families for the $\dim$ for the  \emph{dimension} and $ \ord$ for the \emph{order} given in \cref{example of growth} are scalings. Given a scaling, different versions of scales are defined. In particular, Hausdorff dimension, packing dimension or box dimension are scales.

Given a scaling, we will generalize comparison theorems between the different kinds of dimensions to all the different kinds of scales in \cref{resultsA}, \ref{resultsB} and \ref{resultsC}. The definition of scaling is crafted so that the proofs for \cref{resultsA} and \cref{resultsB} are nearly direct extensions of established results from dimension theory (see \cref{sec compp}).

\medskip
\emph{The main difficulty will be then to prove \cref{resultsC} which enables us to compare the quantization scales with  both the local and the box scales. 
Also even for the specific case of dimension, new inequalities between quantization dimension of a measure and box dimension of the set of positive mass are proven in \cref{resultsC} (inequalities (f) and (h)). Main novelties from \cref{resultsC} are reformulated in \cref{lem quant boite rep au pb,rep au pb}.}
\medskip

In the next \cref{lieu from dim to scale} we recall usual definitions of dimension and introduce the notions of scaling and scales. The theorems comparing the different versions of scales are stated in \cref{sec compp}.
Precise definitions of the involved scales are given in \cref{lieu metric scl}, and in \cref{lieu meas scl} for measures. 
Then, \cref{applicationscales} is devoted to applications of the main results. 
In \cref{lieu wiener}, a first  application of \cref{resultsC} together with Dereich-Lifshits estimate in \cite{dereichproba05} implies the coincidence of local, Hausdorff, packing, quantization and box orders of the Wiener measure for the $L^p$-norm, for any $ p \in [1, \infty]$. Then in \cref{app func}, we apply \cref{resultsA} to show the coincidence of the box, Hausdorff and packing orders for finitely regular functional spaces; refining the Kolmogorov-Tikhomirov study in  \cite[Thm XV]{tikhomirov1993varepsilon}. Lastly in \cref{app emergence}, a consequence of \cref{resultsC} is that the local order of the ergodic decomposition is at most its quantization order. This solves a problem set by Berger in \cite{berger2020complexities}.

\medskip

\paragraph{Thanks:}

\textit{Warmest thanks to Pierre Berger for his dedication and advice; to Ai-Hua Fan for his interest, references, and guidance; to Fran\c{c}ois Ledrappier for answering my questions and providing references; to Martin Leguil for his interest; and to Camille Tardif and Nicolas de Saxc\'e for their helpful references. I would also like to particularly thank the referee for their insightful and highly detailed comments, which helped considerably to improve the clarity of the proofs and the overall structure of this article.}

 \medskip

\subsection{From dimension to scale} \label{lieu from dim to scale}
Let us first recall some classical definitions from dimension theory and see how they could be naturally extended to define finite invariants for infinite dimensional spaces. The Hausdorff, packing and box dimensions of a totally bounded metric space $(X,d)$ are defined by looking at families of subsets of $X$.
First consider the box dimension. Recall that, given an error $ \epsilon > 0$, the covering number $ \mathcal{N}_\epsilon (X) $ is the minimal cardinality of a covering of $X$ by balls of radius $ \epsilon$. Then the \emph{lower and upper box dimensions} of $(X,d)$ are given by: 
\[ \underline{\dim}_B X := \sup \lbrace \alpha > 0 :  \mathcal{N}_\epsilon (X)  \cdot \phi_\alpha (\epsilon) \to + \infty \rbrace \qand  \overline{\dim}_B X := \inf \lbrace \alpha > 0 : \mathcal{N}_\epsilon (X)  \cdot \phi_\alpha (\epsilon) \to 0  \rbrace \;  ,   \]
where $ (\phi_\alpha)_{\alpha >0} $ is the family of functions on $(0,1)$ given for $ \alpha > 0 $ by $\phi_\alpha : \epsilon  \mapsto \epsilon^\alpha $.

Box dimensions, also called box counting dimensions or Minkovski-Bouligand dimensions were introduced by Bouligand in \cite{bouligand1928ensembles}. 
In general, upper and lower box dimensions do not coincide (see e.g. \cite{falconer1997techniques,falconer2004fractal}). However, when $ X$ is a smooth manifold endowed with the Euclidean metric, these two dimensions coincide with the usual definition of dimension. 
Basic properties of  box dimensions are revealed when looking at subsets of a metric space with the induced metric. Notably, box dimensions are non decreasing for the inclusion and are invariant by topological closure.  
In general they are not \emph{$\sigma$-stable}, i.e. the box dimensions of a countable union of subsets of a metric space are a priori not equal to the suprema of the corresponding dimensions of the subsets. 
The most popular version  of dimension that enjoys the property of $\sigma$-stability is  \emph{Hausdorff dimension}. Let us recall its definition. Given an error $ \epsilon > 0 $, consider:
 \[\mathcal{H}_\epsilon^\alpha (X)    := \inf_{  C \in \mathcal{C}_H (\epsilon)  } \sum_{ B(x , \delta )  \in C  }  \phi_\alpha ( \delta )  \; ,   \]
where $ \mathcal{C}_H (\epsilon) $ is the set of countable coverings of $ X$ by balls of radius at most $ \epsilon$. 
Then, the \emph{Hausdorff dimension} of $(X,d)$ is given by: 
\[ \dim_H   X =  \sup \left\{ \alpha > 0 :  
\mathcal{H}_\epsilon^\alpha (X)   \xrightarrow[\epsilon \rightarrow 0]{}   + \infty  \right\}  =   \inf \left\{ \alpha > 0 : \mathcal{H}_\epsilon^\alpha (X)   \xrightarrow[\epsilon \rightarrow 0]{}   0  \right\}  \; .  \]
Lastly, another interesting dimension that enjoys $\sigma$-stability is the \emph{packing dimension}. Its construction is analogous to that of Hausdorff dimension and was introduced by Tricot in his thesis \cite{tricot1982two}. It is actually linked to the upper box dimension by the following characterization that we will use for the moment as a definition:
\[ \dim_P X :=  \inf \sup_{n \ge 1 } \overline{\dim}_B E_n  \; ,  \]
where the infimum is taken over countable coverings $ ( E_n )_{n \ge 1} $ by subsets of $X$. 
These four versions of dimension are bi-Lipschitz invariants; they quantify different aspects of the geometry of the studied metric space since they a priori do not coincide. However, it always holds:
\[ \dim_H X  \le \underline{\dim}_B X \le \overline{\dim}_B X  \qand  \dim_H X  \le \dim_P  X \le \overline{\dim}_B X  \; .  \]
See e.g. \cite{falconer1997techniques}, \cite{falconer2004fractal} for detailed proofs.

Let us now introduce \emph{scales}. A simple observation is that all of the above versions of dimension involve a specific parameterized family $ ( \phi_\alpha)_{\alpha > 0} = ( \epsilon \mapsto \epsilon^\alpha )_{ \alpha > 0 } $ of \emph{gauge functions} with polynomial behavior. In dimension theory, gauge functions generalize the measurement of ball diameters, enabling more control over the definition of the Hausdorff measure in the finite dimensional case. For scales, the objective is different. Roughly speaking, we will allow gauge functions to exhibit behaviors that are far from being polynomial. 
Let us precise the discussion. If a space $ (X,d) $ is infinite dimensional then its covering number $ \mathcal{N}_\epsilon(X) $ grows faster than any polynomial in $ \epsilon^{-1} $ as $ \epsilon$ decreases to $ 0$. In order to define finite invariants for infinite dimensional spaces we must allow other gauge functions that decrease faster than any polynomial when the radius of the involved balls decreases to $ 0$. 
Consequently, we propose to replace the family $(\phi_\alpha)_{\alpha>0}= (\epsilon \in ( 0, 1 ) \mapsto \epsilon^\alpha)_{\alpha > 0 } $  in all the above definitions of dimensions, by other families of gauge functions that encompass the following examples of growth: 
\begin{example}\label{example of growth}
\begin{enumerate}
\item The family   $\dim = ( \epsilon	 \in ( 0,1) \mapsto \epsilon^\alpha ) _{ \alpha > 0 }$ which is used in the definitions of dimensions, 
\item the family $\ord = ( \epsilon \in ( 0,1 ) \mapsto  \exp ( - \epsilon^{-\alpha} )) _{\alpha> 0 }$ which is called \emph{order}. It fits with the growth of  the covering number of spaces of finitely regular functions studied by Kolmogorov-Tikhomirov  \cite{tikhomirov1993varepsilon}, see \cref{KT},  or with the one of the space of ergodic measures spaces by Berger-Bochi \cite{berger2020complexities}, as we will see in \cref{ans Berger}, 
\item the family $  \left(  \epsilon \in ( 0,1 ) \mapsto \exp ( - (\log \epsilon^{-1} ) ^\alpha )\right) _{\alpha > 0 }$ which fits with the growth  of  the covering number of holomorphic functions estimated by Kolmogorov-Tikhomirov  \cite{tikhomirov1993varepsilon}, as we will see in \cref{holo}.  
\end{enumerate}
\end{example}
To properly extend definitions and comparison theorems among different scales, i.e. the generalized box, Hausdorff, and packing dimensions; the family of functions \((\phi_\alpha)_{\alpha > 0}\) must satisfy certain mild assumptions. This requirement leads us to introduce the notion of \emph{scaling}: 
\begin{definition}[Scaling] \label{scaling}
A family  $ \Scl = (\scl_\alpha)_{\alpha > 0} $  of positive non-decreasing functions on  $(0, 1)$ is  a \emph{scaling} when for every $ \alpha> \beta > 0 $ and any $\lambda > 1 $ close enough to $1$, it holds:  
  \begin{equation}  \tag{$ * $} \label{def scl}  \scl_\alpha  ( \epsilon)  = o \left(  \scl_\beta ( \epsilon^\lambda ) \right)   \qand   \scl_\alpha  ( \epsilon) = o  \left(   \scl_\beta ( \epsilon ) ^\lambda  \right)   \, \text{ as }  \epsilon \rightarrow 0  \; . \end{equation}
\end{definition} 
\begin{remark}
The left hand side condition is used in all the proofs of the theorems represented on \cref{fig:my_label}. The right hand side condition is only used to prove the equalities between packing and upper local  scales in \cref{resultsB} and to compare upper local scales with upper box and upper quantization scales in \cref{resultsC} inequalities $(c) \& (g)  $. It also allows us to characterize packing scale with packing measure. 
\end{remark}

\begin{remark}
There are scalings that allow one to study $0$-dimensional spaces, for instance the family: \[ \left(  \epsilon \mapsto  \log ( \epsilon^{-1} \right)^{ - \alpha} ) _{ \alpha > 0 }   \; . \] 
\end{remark}

 We will show in  \cref{ex scl} that the families in \cref{example of growth} are scalings. 
Scalings enable defining  \emph{scales} that generalize packing dimension, Hausdorff dimension,  box dimensions,  quantization  dimensions and local dimensions.
For each scaling, the different kinds of scales do not a priori coincide on a generic space. 
Nevertheless, in \cref{applicationscales}, as a direct application of our comparison  theorems, we bring examples of metric spaces and measures where all those definitions coincide. In these examples,  equalities between the different scales are linked to some underlying "homogeneity" of the space that is provided by the existence of an equilibrium state. 

Now for a metric space $ (X,d)$, replacing the specific family $  \dim $ in the definition of box dimensions by a given scaling $ \Scl = ( \scl_\alpha ) _{ \alpha > 0} $ gives the following: 
\begin{definition}[Box scales] \label{def box}
\emph{Lower and upper  box scales} of a metric space $ (X,d) $ are defined by:  
\[\underline \Scl_B X=  \sup \left\{ \alpha > 0 :  \mathcal{N}_\epsilon (X) \cdot \scl_\alpha ( \epsilon )\xrightarrow[\epsilon \rightarrow 0]{}   + \infty  \right\} \]
and
\[ \overline{\Scl}_B   X =   \inf \left\{ \alpha > 0 : \mathcal{N}_\epsilon (X)  \cdot  \scl_\alpha ( \epsilon ) \xrightarrow[\epsilon \rightarrow 0]{}  0 \right\}  \;.\] 
\end{definition}
Moreover, we will generalize the notion of Hausdorff and packing dimensions to the \emph{Hausdorff scale} denoted $ \Scl_H X $  (see \cref{H scale}) and \emph{packing scale}  denoted $ \Scl_P X$ (see \cref{packing l box}). The constructions are fully detailed in the next section. Let us now state the main results on the comparison of scales of metric spaces.
\subsection{Results on the comparison of scales \label{sec compp}}
\begin{figure}[!ht]
    \centering
    \includegraphics[scale=.15]{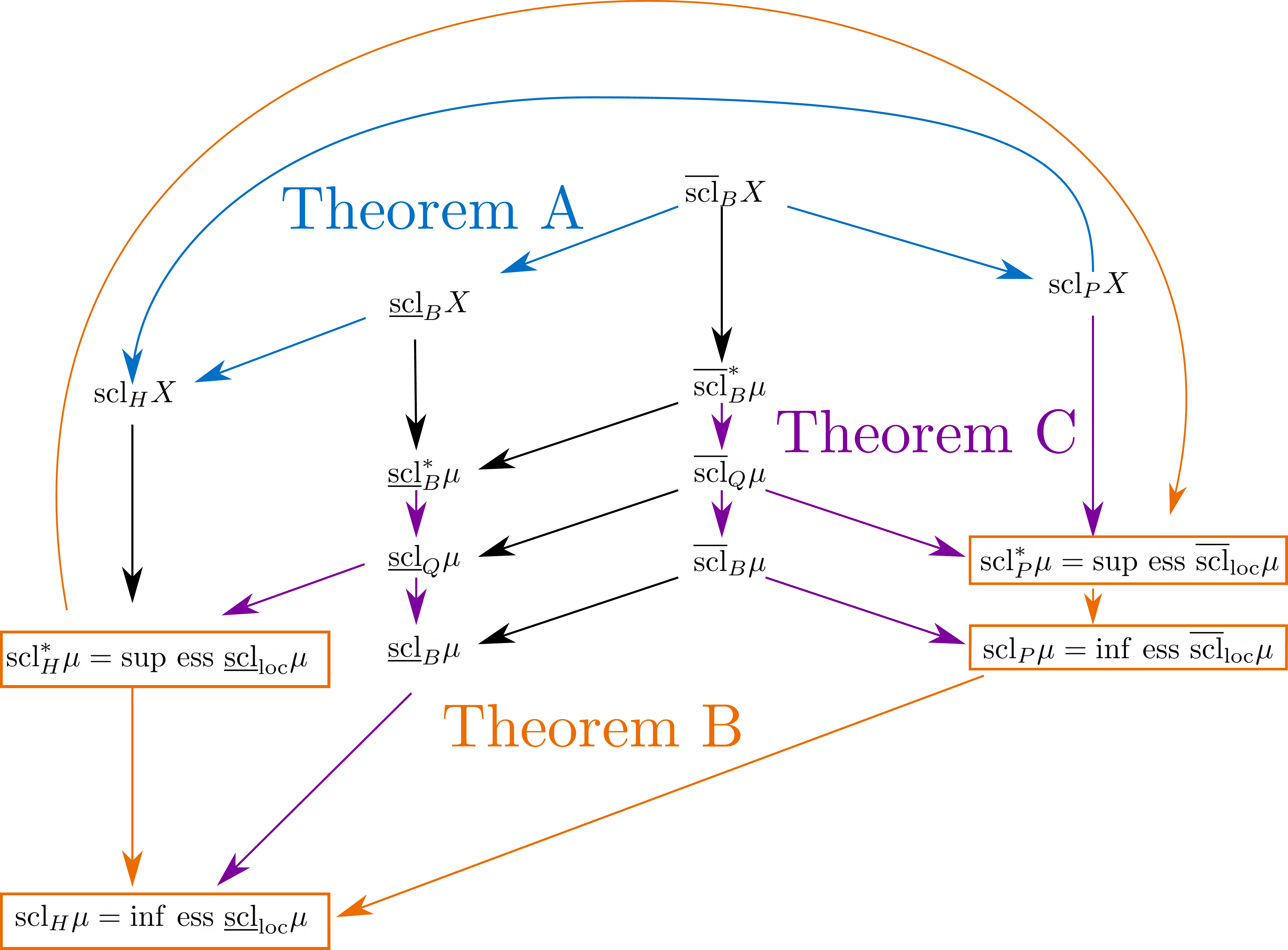}
    \caption{Diagram presenting results of Theorems \ref{resultsA}, \ref{resultsB} and \ref{resultsC}. }
    {Each arrow is an inequality, the scale at the starting point of the arrow is at least the one at its ending point : $" \rightarrow " = " \ge" $.
    } 
    \label{fig:my_label}
\end{figure}
In this section, we introduce other kinds of scales and Theorems  \ref{resultsA}, \ref{resultsB} and \ref{resultsC}  that state the inequalities between them as illustrated in \cref{fig:my_label}.

First, we bring the following generalization of classical inequalities comparing dimensions of metric spaces to the frame of scales: 

\medskip
\begin{theo}
\label{resultsA}
Let $( X,d)$ be a metric space and $ \Scl$ a scaling, the following inequalities hold: 

\[ \Scl_H X \le \Scl_P X  \le \overline \Scl_B X \qand\Scl_H X  \le \underline  \Scl_B X  \le  \overline \Scl_B X \; . \]
\end{theo}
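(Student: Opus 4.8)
The plan is to transpose the classical proof of $\dim_H X \le \dim_P X \le \overline\dim_B X$ and $\dim_H X \le \underline\dim_B X \le \overline\dim_B X$, replacing the elementary estimate $\epsilon^\alpha \le \epsilon^\beta$ (valid for $\alpha>\beta$, $\epsilon\in(0,1)$) by what the left-hand part of the scaling condition $(*)$ provides. The first thing I would record is that, for $\alpha > \gamma > 0$, one has $\psi_\alpha(\epsilon) = o\bigl(\psi_\gamma(\epsilon)\bigr)$ as $\epsilon \to 0$: choosing $\lambda > 1$ close to $1$ with $\psi_\alpha(\epsilon) = o(\psi_\gamma(\epsilon^\lambda))$ and using that $\psi_\gamma$ is non-decreasing with $\epsilon^\lambda \le \epsilon$, we get $\psi_\alpha(\epsilon)/\psi_\gamma(\epsilon) \le \psi_\alpha(\epsilon)/\psi_\gamma(\epsilon^\lambda) \to 0$. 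Consequently the set $\{\alpha > 0 : \mathcal N_\epsilon(X)\,\psi_\alpha(\epsilon) \to 0\}$ is an up-set, the set $\{\alpha > 0 : \mathcal N_\epsilon(X)\,\psi_\alpha(\epsilon) \to +\infty\}$ is a down-set, and they are disjoint; hence the supremum of the latter is at most the infimum of the former, i.e. $\underline\Psi_B X \le \overline\Psi_B X$.

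Next I would prove $\Psi_H X \le \underline\Psi_B X$, the substantive step. Write $\mathcal H_\epsilon^{\psi_\alpha}(X) := \inf_{C \in \mathcal C_H(\epsilon)} \sum_{E \in C} \psi_\alpha(\diam E)$ for the quantity defining the Hausdorff scale (see \cref{H scale}), and recall it is non-increasing in $\epsilon$. Given an $\epsilon$-covering of $X$ by $\mathcal N_\epsilon(X)$ balls of radius $\epsilon$, the same family, viewed as sets of diameter at most $2\epsilon$, lies in $\mathcal C_H(2\epsilon)$, so $\mathcal H_{2\epsilon}^{\psi_\alpha}(X) \le \mathcal N_\epsilon(X)\,\psi_\alpha(2\epsilon)$. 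Fix now $\alpha > \underline\Psi_B X$ and pick $\beta$ with $\underline\Psi_B X < \beta < \alpha$; choose $\lambda > 1$ close to $1$ realizing $(*)$ for the pair $(\alpha,\beta)$. For $\epsilon$ small enough $2\epsilon \le \epsilon^{1/\lambda}$, so $\psi_\alpha(2\epsilon) \le \psi_\alpha(\epsilon^{1/\lambda}) = o\bigl(\psi_\beta(\epsilon)\bigr)$ (apply $\psi_\alpha(\eta) = o(\psi_\beta(\eta^\lambda))$ with $\eta = \epsilon^{1/\lambda}$). Since $\beta$ is not in the down-set above, there is a sequence $\epsilon_n \to 0$ along which $\mathcal N_{\epsilon_n}(X)\,\psi_\beta(\epsilon_n)$ stays bounded; then $\mathcal H_{2\epsilon_n}^{\psi_\alpha}(X) \le \mathcal N_{\epsilon_n}(X)\,\psi_\alpha(2\epsilon_n) = \bigl(\mathcal N_{\epsilon_n}(X)\,\psi_\beta(\epsilon_n)\bigr)\cdot\frac{\psi_\alpha(2\epsilon_n)}{\psi_\beta(\epsilon_n)} \to 0$. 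By monotonicity of $\epsilon \mapsto \mathcal H_\epsilon^{\psi_\alpha}(X)$ this forces $\mathcal H_\epsilon^{\psi_\alpha}(X) \to 0$, hence $\alpha \ge \Psi_H X$. Letting $\alpha \downarrow \underline\Psi_B X$ gives $\Psi_H X \le \underline\Psi_B X$; the same argument applied to any subset $Y \subseteq X$ gives $\Psi_H Y \le \underline\Psi_B Y$.

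For the packing inequalities I would use the description $\Psi_P X = \inf\{\sup_{n \ge 1}\overline\Psi_B E_n : X = \bigcup_{n \ge 1} E_n\}$ from \cref{packing l box} (the scale analogue of $\dim_P X = \inf \sup_n \overline\dim_B E_n$). Testing it on the one-element cover $\{X\}$ immediately yields $\Psi_P X \le \overline\Psi_B X$. For $\Psi_H X \le \Psi_P X$, I would invoke countable stability of the Hausdorff scale (established together with \cref{H scale}): for every countable cover $X = \bigcup_n E_n$, $\Psi_H X \le \sup_n \Psi_H E_n \le \sup_n \underline\Psi_B E_n \le \sup_n \overline\Psi_B E_n$, the middle inequality being the one of the previous paragraph applied to each $E_n$ (with nothing to prove when $E_n$ is not totally bounded, so that $\overline\Psi_B E_n = +\infty$). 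Taking the infimum over all such covers gives $\Psi_H X \le \Psi_P X$. Combining, $\Psi_H X \le \Psi_P X \le \overline\Psi_B X$ and $\Psi_H X \le \underline\Psi_B X \le \overline\Psi_B X$.

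I expect the main obstacle to be the inequality $\Psi_H X \le \underline\Psi_B X$: it is the only step that is not purely formal, and it is here that the left-hand part of $(*)$ is genuinely needed, namely to absorb the passage from a covering by balls of radius $\epsilon$ to a covering by sets of diameter $2\epsilon$, while at the same time one must cope with the fact that $\underline\Psi_B X$ is a $\liminf$-type quantity, which forces the argument along a subsequence $\epsilon_n \to 0$ and then an appeal to the monotonicity of $\epsilon \mapsto \mathcal H_\epsilon^{\psi_\alpha}(X)$ to turn a subsequential conclusion into a genuine one. The remaining inequalities reduce, once the definitions of \cref{H scale} and \cref{packing l box} are in place, to countable stability of $\Psi_H$ and to testing the defining infimum of $\Psi_P$ on the trivial cover.
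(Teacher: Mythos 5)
Your proposal is correct and follows essentially the same route as the paper: the substantive step $\Psi_H X \le \underline\Psi_B X$ by comparing a minimal covering with the Hausdorff pre-measure (the paper works with radius-$\epsilon$ balls and argues from $\alpha < \Psi_H X$, whereas you argue the contrapositive with diameters and absorb the factor $2$ via $(*)$ — cosmetically different, same idea), then $\Psi_P X \le \overline\Psi_B X$ from the trivial cover, and $\Psi_H X \le \Psi_P X$ from countable stability of the Hausdorff scale combined with the first inequality applied to each piece of a cover. No gaps.
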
 

 In the specific case of dimension, these inequalities are well known and presented for instance by Tricot \cite{tricot1982two} or Falconer \cite{falconer1997techniques,falconer2004fractal}. 
The proof of this theorem will be done in \cref{comp of scales}.  The key part is to show that Hausdorff scales and packing scales are well defined quantities. Then we will follow the lines of Falconer's proof to show \cref{resultsA}.
\medskip 

The relationship between the dimensions of \((X, d)\) and its measures was first studied by Frostman \cite{frostman1935potentiel}, who used equilibrium states to describe the Hausdorff dimension of the underlying space. Conversely, the dimensions of sets can characterize the asymptotic behavior of the mass of small balls of a measure. This concept was introduced by Fan as \emph{local dimension} \cite{fan1994dimensions}, leading to seminal studies by Fan, Lau, and Rao \cite{fan2002relationships}, P\"otzelberger \cite{potzelberger1999quantization}, and Tamashiro \cite{tamashiro1995dimensions}.
Similarly we introduce \emph{local scales} that extend the notion of local dimensions of a measure:
\begin{definition}[Local scales] \label{def loc}
Let $ \mu$ be a Borel measure on a metric space $(X,d)$ and $ \Scl $ a scaling. The \emph{lower and upper scales} of $\mu$ are the functions that map a point $x \in X$ to:
\[\underline{\Scl}_{\loc}  \mu (x)  = \sup \left\{  \alpha > 0 : \frac{\mu \left(   B(x,  \epsilon ) \right) }{ \scl_\alpha  ( \epsilon )}   \xrightarrow[\epsilon \rightarrow 0]{}  0\right \} \] and \[\overline{\Scl}_{\loc}  \mu (x) = \inf \left\{ \alpha > 0 :  
\frac{\mu \left(   B(x,  \epsilon ) \right) }{ \scl_\alpha  ( \epsilon )}   \xrightarrow[\epsilon \rightarrow 0]{}  +  \infty
\right\} \;.\]
\end{definition} 
As in dimension theory, we should not compare the local scales with the scales of $X$ but to those of its subsets with positive mass. This observation leads to considering the following: 
\begin{definition}[Hausdorff, packing and box scales of a measure] \label{metric to measure}
Let $ \Scl$ be a scaling and $ \mu$ a non-null Borel measure on  a metric space $(X,d)$. 
For any $ \Scl_\bullet \in \left\{ \Scl_H, \Scl_P, \underline \Scl_B, \overline \Scl_B \right\}$  we define 
lower and upper scales of the measure $ \mu$ by: 
\[\Scl_\bullet \mu = \inf_{ E \in \mathcal{B} } \left\{ \Scl_\bullet E : \mu(  E ) >  0 \right\} \qand  \Scl^*_\bullet\mu = \inf_{ E \in \mathcal{B} }  \left\{ \Scl_\bullet E : \mu( X \backslash E ) = 0 \right\} \;,  \]
where $\mathcal{B}$ is the set of Borel subsets of $X$. 
\end{definition}
In the case of dimension  Fan  \cite{fan1994dimensions,fan2002relationships} and Tamashiro \cite{tamashiro1995dimensions} exhibited the relationships between the Hausdorff and packing dimensions of measures and their local dimensions that we generalize as: 
\begin{theo}
\label{resultsB}
Let $\mu$ be a Borel measure on a  metric space $(X,d)$, then for any scaling $ \Scl$, Hausdorff and packing scales of $\mu$ are characterized by:   
\[  \Scl_H \mu = \infess  \underline \Scl_{\loc} \mu, \quad \Scl^* _H \mu = \supess  \underline \Scl_{\loc} \mu, \quad
  \Scl_P \mu = \infess  \overline \Scl_{\loc} \mu , \quad \Scl^* _P \mu = \supess  \overline \Scl_{\loc} \mu \;,  \]
  where
 $ \supess$ and $ \infess$  denote the essential suprema and  infima of a function.
\end{theo}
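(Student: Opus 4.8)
The plan is to prove the four identities by the standard Frostman-type covering/packing arguments, adapted to a general scaling $\Psi$; the only place where anything beyond bookkeeping enters is the use of condition $(*)$ in \cref{scaling}. I will establish the Hausdorff identities first and then dualize for the packing ones. Throughout, set $s := \infess \underline\Psi_{\loc}\mu$ and recall that for a Borel set $E$ with $\mu(E)>0$, the inequality $\Psi_H E \ge \alpha$ holds as soon as $\underline\Psi_{\loc}\mu(x) > \alpha$ for $\mu$-a.e.\ $x \in E$: indeed, on such a set the mass distribution principle applies, since $\mu(B(x,\epsilon)) = o(\psi_\alpha(\epsilon))$ forces $\mathcal H^{\psi_\alpha}_\epsilon(E) \gtrsim \mu(E) > 0$ via a Vitali-type argument (cover $E$ efficiently by sets $E_i$ of diameter $\le\epsilon$, bound each $\mu(E_i) \le \mu(B(x_i, \diam E_i))$ by a point $x_i$ chosen with small local scale, and sum). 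Conversely, if $\underline\Psi_{\loc}\mu(x) < \alpha$ on a positive-mass subset, one extracts from that subset a further subset $F$ with $\mu(F)>0$ on which $\mu(B(x,\epsilon_x)) \ge \psi_\alpha(\epsilon_x)$ along a sequence $\epsilon_x \to 0$, and a $5r$-covering lemma turns this into an efficient cover of $F$ with $\sum \diam(\cdot)^{\text{scaling}} \to 0$, i.e.\ $\Psi_H F \le \alpha$. Combining these two directions with \cref{def loc} and \cref{metric to measure} yields $\Psi_H \mu = \infess \underline\Psi_{\loc}\mu$ and $\Psi^*_H\mu = \supess\underline\Psi_{\loc}\mu$: for the $\inf$-version one notes any $E$ with $\mu(E)>0$ must meet $\{\underline\Psi_{\loc}\mu \le s+\delta\}$ in positive mass (by definition of essential infimum) hence $\Psi_H E \le s+\delta$, while the set $\{\underline\Psi_{\loc}\mu \ge s\}$ itself has positive mass and realizes $\Psi_H \ge s$; the $\sup$-version is the same computation on full-measure sets.

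For the packing identities the scheme is dual, replacing coverings by packings and $\mathcal H$ by the pre-packing measure $\mathcal P_0$, then $\Psi_P$ by the usual countable-stabilization $\inf\sup_n \overline\Psi_B E_n$. The key local statement becomes: if $\overline\Psi_{\loc}\mu(x) < \alpha$ on a set of positive mass, then $\mu(B(x,\epsilon))/\psi_\alpha(\epsilon) \to \infty$ there, so every $\epsilon$-packing of that set by disjoint balls has at most $\mu(X)/\psi_\alpha(\epsilon)$ members, giving an upper bound on the packing pre-measure and hence $\overline\Psi_B \le \alpha$ on a suitable subset after the stabilization; conversely $\overline\Psi_{\loc}\mu(x) > \alpha$ forces, along a sequence, $\mu(B(x,\epsilon_x)) \le \psi_\alpha(\epsilon_x)$, and a maximal-packing argument produces packings witnessing $\overline\Psi_B \ge \alpha$ on each piece of a countable decomposition, whence $\Psi_P \ge \alpha$. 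Here is exactly where the right-hand half of $(*)$, $\psi_\alpha(\epsilon) = o(\psi_\beta(\epsilon)^\lambda)$, is needed: passing from "$\mu(B(x,\epsilon_x)) \le \psi_\alpha(\epsilon_x)$ along a sequence" to a genuine upper-box estimate for a whole subset requires comparing $\psi$ at a point with a small power of $\psi$ at a comparable scale, exactly as in Tricot's and Tamashiro's treatment of packing dimension of measures. As noted in the remark after \cref{scaling}, the left-hand half of $(*)$ suffices for the Hausdorff identities.

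The main obstacle is the packing direction, specifically the step showing $\Psi_P\mu \ge \infess\overline\Psi_{\loc}\mu$: one must take a Borel set $E$ with $\mu(E)>0$ on which $\overline\Psi_{\loc}\mu \ge \alpha$, decompose it (countably) into pieces $E_n$ and show $\overline\Psi_B E_n \ge \alpha$ for at least one $n$, and the decomposition has to be arranged so that on each $E_n$ the inequality $\mu(B(x,\epsilon)) \le \psi_\alpha(\epsilon)$ holds for a \emph{common} range of scales (not merely a point-dependent sequence $\epsilon_x$), which is where uniformization over the set and the scaling inequality $(*)$ interact delicately. Once that uniformization is in place, the $\inf/\sup$ bookkeeping against $\mathcal B$ in \cref{metric to measure} is routine. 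I expect to structure the write-up as two lemmas — one giving the two-sided local control of $\Psi_H E$ (resp.\ $\overline\Psi_B E$) by $\underline\Psi_{\loc}\mu$ (resp.\ $\overline\Psi_{\loc}\mu$) on sets of positive/full measure — followed by a short deduction of the four stated equalities.
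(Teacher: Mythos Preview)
Your overall approach---Frostman mass-distribution for the lower bounds and Vitali $5r$-covering for the upper bounds, then dualizing to packings---is the paper's, and your two key lemmas are the correct ones. But the deduction step in your plan has the quantifiers reversed. You write that ``any $E$ with $\mu(E)>0$ must meet $\{\underline\Psi_{\loc}\mu \le s+\delta\}$ in positive mass (by definition of essential infimum), hence $\Psi_H E \le s+\delta$''. Both clauses are wrong: the essential infimum only guarantees that the level set $\{\underline\Psi_{\loc}\mu \le s+\delta\}$ \emph{itself} has positive mass, not that every positive-mass set meets it (take $E=\{\underline\Psi_{\loc}\mu > s+2\delta\}$ when the local scale is non-constant); and even if $E$ did meet it, that would only bound $\Psi_H$ of the intersection from above, not $\Psi_H E$. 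The correct use of your lemmas is the opposite of what you wrote: for $\Psi_H\mu \le s$, \emph{exhibit} the single set $F=\{\underline\Psi_{\loc}\mu < s+\delta\}$ and apply your Vitali lemma to it; for $\Psi_H\mu \ge s$, take an \emph{arbitrary} $E$ with $\mu(E)>0$, observe that $\mu$-a.e.\ $x\in E$ has $\underline\Psi_{\loc}\mu(x)\ge s$, and apply your mass-distribution lemma to $\mu|_E$. The same swap fixes the $\sup$-version.

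Your description of the ``main obstacle'' is also slightly off. For $\Psi_P\mu \ge \infess\overline\Psi_{\loc}\mu$ you need, for \emph{every} $E$ with $\mu(E)>0$ and \emph{every} countable cover $(E_n)$ of $E$, that some $E_n$ has $\overline\Psi_B E_n \ge \alpha$; you cannot ``arrange'' the decomposition. And uniformizing the scales to a common range is neither possible here (you only have $\mu(B(x,\epsilon_x))\le\psi_\alpha(\epsilon_x)$ along an $x$-dependent sequence) nor needed. The paper bypasses this by working through the packing pre-measure: for each $x$ pick one radius $r(x)$ witnessing $\mu(B(x,5r(x)))\le\psi_\alpha(5r(x))$, apply Vitali to $\{B(x,r(x)):x\in E_n\}$ to extract a pack, and read off $\mathcal P_0^{\psi_\beta}(E_n)\ge\mu(E_n)$; summing over $n$ gives $\mathcal P^{\psi_\beta}(E)\ge\mu(E)>0$, hence $\Psi_P E\ge\beta$ via \cref{car packing}. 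No uniformization, and the only use of $(*)$ is the harmless comparison $\psi_\alpha(5r)\le\psi_\beta(r)$.
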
 
The proof of \cref{resultsB} is provided in \cref{proof thm B}. It is inspired by the proofs of Fan \cite{fan1994dimensions} and Tamashiro \cite{tamashiro1995dimensions} in the dimensional case.  

Let us introduce a last kind of scale, the \emph{quantization scale}. It generalizes the quantization dimension. This definition is motivated by the following works \cite{graf2007foundations, potzelberger1999quantization,dereich2003link,dereichproba05,berger2017emergence,berger2021emergence,berger2020complexities}. 
\begin{definition}[Quantization scales] \label{def quant}
Let $(X, d)$ be a metric space and  $\mu$ a Borel measure on $X$.
Given an error $ \epsilon >0 $, the \emph{quantization number} $\mathcal Q_\mu( \epsilon) $ of $ \mu$ is the minimal cardinality of a finite set of points that is  $ \epsilon$-close on average to any point in $X$:
\[ \mathcal{Q}_\mu(\epsilon) = \inf  \left\{ N \ge 0 : \exists 
\left\{ c_i \right\}_{i= 1, \dots, N } \subset X,  \int_X d(x,  \left\{ c_i\right\}_{  1 \le i \le N } ) d \mu (x) \le \epsilon  \right\} \; . \]
Then \emph{lower and upper quantization scales} of $ \mu$  for a given scaling $ \Scl$ are defined by:
\[ \underline{\Scl}_Q \mu  =  \sup \left\{ \alpha > 0 :   \scl_\alpha ( \epsilon ) \cdot   \mathcal{Q}_\mu ( \epsilon )  \xrightarrow[\epsilon \rightarrow 0]{}  + \infty  \right\}   \] and \[
\overline{\Scl}_Q \mu  =  \inf \left\{ \alpha > 0 :    \scl_\alpha ( \epsilon ) \cdot  \mathcal{Q}_\mu ( \epsilon )   \xrightarrow[\epsilon \rightarrow 0]{}  0  \right\}  \; . \]
\end{definition}
The following gives relationships between the  different kinds of scales of measures:
\begin{theo}[Main]
\label{resultsC}
Let $(X, d)$ be a metric space. 
Let  $\mu$ be a Borel measure on $X$. For any scaling $ \Scl$, the following inequalities on the scales of $\mu$ hold:  
\[  \infess \underline{\Scl}_{\loc} \mu  \underbrace{\le}_{(a)}  \underline   \Scl_B \mu  \underbrace{\le}_{(b)} \underline \Scl_Q \mu  \quad ; \quad \infess \overline{\Scl}_{\loc} \mu   \underbrace{\le}_{(c)}  \overline   \Scl_B \mu  \underbrace{\le}_{(d)}\overline \Scl_Q \mu   \]
and 
\[  \supess \underline{\Scl}_{\loc} \mu  \underbrace{\le}_{(e)}\underline \Scl_Q \mu   \underbrace{\le}_{(f)}  \underline   \Scl^*_B \mu   \quad ; \quad \supess \overline{\Scl}_{\loc} \mu  \underbrace{\le}_{(g)}\overline \Scl_Q \mu  \underbrace{\le}_{(h)}  \overline   \Scl^*_B \mu\;  .  \]
\end{theo}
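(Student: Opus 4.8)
The plan is to prove the eight inequalities by grouping them according to the three structural mechanisms at play: (i) passing between local behaviour of $\mu$ and covering numbers of well-chosen Borel subsets, (ii) passing between covering numbers and quantization numbers, and (iii) the essential-infimum/essential-supremum bookkeeping. Inequalities $(a)$ and $(c)$ [resp. $(e)$, $(g)$] are really a restatement of half of \cref{resultsB}: by \cref{resultsB} one has $\infess \underline\Psi_{\loc}\mu = \Psi_H\mu$ and $\infess\overline\Psi_{\loc}\mu = \Psi_P\mu$, and by \cref{metric to measure} applied with a Borel set $E$ of positive mass, $\Psi_H E \le \Psi_P E \le \overline\Psi_B E$ and $\Psi_H E\le\underline\Psi_B E$ from \cref{resultsA}; taking the infimum over $E$ with $\mu(E)>0$ gives $(a)$ and $(c)$. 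For $(e)$ and $(g)$ the same argument is run with the starred versions: $\supess\underline\Psi_{\loc}\mu = \Psi_H^*\mu$, $\supess\overline\Psi_{\loc}\mu = \Psi_P^*\mu$, and one needs $\Psi_H^*\mu \le \underline\Psi_Q\mu$ and $\Psi_P^*\mu\le\overline\Psi_Q\mu$ — but these in turn will follow from $(f)$ and $(h)$ once those are established, since $(e)$ composed with $(f)$ is exactly $\supess\underline\Psi_{\loc}\mu\le\underline\Psi_B^*\mu = \Psi_H^*\mu$ via \cref{resultsA}, \cref{metric to measure}; so it is cleanest to prove $(f)$, $(h)$ first and deduce $(e)$, $(g)$.

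For $(b)$ and $(d)$ — the covering-number-to-quantization comparison in the direction $\Psi_B\le\Psi_Q$ — the key inequality is $\mathcal N_{2\epsilon}(\supp\mu)\lesssim$ something controlled by $\mathcal Q_\mu(\epsilon)$, or more precisely a Markov-type estimate: if $\{c_i\}_{i\le N}$ realizes $\int_X d(x,\{c_i\})\,d\mu \le \epsilon$, then the set where $d(x,\{c_i\})>\epsilon/\delta$ has $\mu$-mass at most $\delta$, so outside a set of small mass the balls $B(c_i,\epsilon/\delta)$ cover everything; hence $\mathcal N_{\epsilon/\delta}$ of a full-or-large-mass subset is at most $N = \mathcal Q_\mu(\epsilon)$. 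Converting this into a scale inequality uses the left-hand condition $(*)$ of \cref{scaling} to absorb the constant factor $1/\delta$ in the radius (this is exactly the role flagged in the remark after \cref{scaling}: $\psi_\alpha(\epsilon) = o(\psi_\beta(\epsilon^\lambda))$ lets us trade $\epsilon \rightsquigarrow \epsilon^\lambda$, and a bounded rescaling of $\epsilon$ is harmless up to shrinking $\alpha$). One has to be slightly careful that the covered set depends on $\epsilon$ through $\delta$; choosing $\delta=\delta(\epsilon)\to 0$ slowly gives a genuine covering of a fixed subset of full measure in the limit, which is what the definitions of $\Psi_B\mu$ and $\Psi_B^*\mu$ (infimum over $E$) are designed to accommodate.

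The inequalities $(f)$ and $(h)$ — quantization dominated by box scale of a full-measure set, $\Psi_Q\mu\le\Psi_B^*\mu$ — go the other way: given a covering of a full-measure set $E$ by $\mathcal N_\epsilon(E)$ balls of radius $\epsilon$, pick one center $c_i$ per ball; then every point of $E$ is within $\epsilon$ of some $c_i$, so $\int_X d(x,\{c_i\})\,d\mu\le\epsilon\cdot\mu(X)$ provided $\mu$ is finite (if $\mu(X)=1$, exactly $\epsilon$), giving $\mathcal Q_\mu(\epsilon\cdot\mu(X))\le\mathcal N_\epsilon(E)$, and then optimizing over $E$ and translating to scales, again using $(*)$ to handle the $\mu(X)$ factor and to pass from $\overline{\phantom{x}}$ to $\underline{\phantom{x}}$ growth conditions. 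For the upper-local/quantization inequality $(g)$ one additionally invokes the right-hand condition of $(*)$, $\psi_\alpha(\epsilon)=o(\psi_\beta(\epsilon)^\lambda)$, exactly as flagged in the remark, to relate $\mathcal Q_\mu(\epsilon)$ growth of exponent $\lambda$ back to exponent $1$.

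**Main obstacle.** The routine content is the two Markov/packing estimates above; the genuinely delicate point is the quantitative translation between a pointwise-in-$\epsilon$ covering statement about an $\epsilon$-dependent Borel set and the three-quantifier definitions of the scales — i.e. making the choice of the auxiliary function $\delta(\epsilon)$ (and of the exhausting sequence of Borel sets) uniform enough that the limiting set has full (or positive) measure, while the scaling axiom $(*)$ still absorbs all the accumulated multiplicative losses. This is precisely where the paper says "the main difficulty will be to prove \cref{resultsC}", and I expect the bulk of the real work to be in inequalities $(c)$, $(g)$ (the upper-local comparisons, which need the right-hand half of $(*)$) and in $(b)$, $(f)$ where the set $E$ and the scale exponent $\alpha$ must be chosen simultaneously.
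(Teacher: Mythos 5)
Your plan for $(a)$, $(c)$, $(f)$, $(h)$ is sound and close to the paper's (for $(a)$, $(c)$ you go through \cref{resultsB} and the termwise inequality $\Psi_H E\le\underline\Psi_B E$, $\Psi_P E\le\overline\Psi_B E$ over positive-mass sets, which works; the paper instead routes $(a)$, $(c)$ through $(e)$, $(g)$ applied to restrictions of $\mu$, but both are valid). The genuine gap is in $(e)$ and $(g)$. You propose to deduce $\supess\underline\Psi_{\loc}\mu\le\underline\Psi_Q\mu$ from $(f)$, on the grounds that the composition of $(e)$ and $(f)$ is the known inequality $\Psi_H^*\mu\le\underline\Psi_B^*\mu$. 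This is logically backwards: $(f)$ bounds $\underline\Psi_Q\mu$ from \emph{above} by $\underline\Psi_B^*\mu$, while $(e)$ requires a \emph{lower} bound on $\underline\Psi_Q\mu$; knowing $a\le c$ and $b\le c$ never yields $a\le b$. Inequalities $(e)$ and $(g)$ are precisely the ones the paper singles out as needing a new argument (\cref{rep au pb}): one restricts $\mu$ to $E=\{x:\underline\Psi_{\loc}\mu(x)>\alpha\}$ (positive mass), applies the Borel--Cantelli construction of $(b)$, $(d)$ to the restriction $\sigma$ to produce a Borel set $F\subset E$ of positive $\sigma$-mass with $\underline\Psi_B F\le\underline\Psi_Q\sigma\le\underline\Psi_Q\mu$, and then closes the loop with the Frostman/Vitali-type estimate $\alpha\le\infess\underline\Psi_{\loc}\tau\le\Psi_H F\le\underline\Psi_B F$ for $\tau=\mu|_F$ (\cref{infess}, \cref{lemma dim loc}, \cref{Falconer hausdorff boite}). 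In short, $(e)$ and $(g)$ follow from $(b)$ and $(d)$ applied to restrictions, not from $(f)$ and $(h)$; without this your proposal does not prove them.

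A secondary but substantive issue is in $(b)$, $(d)$ themselves. Your Markov estimate (the paper's \cref{lemer}) is the right starting point, but "choosing $\delta(\epsilon)\to0$ slowly gives a genuine covering of a fixed subset of full measure in the limit" skips the actual mechanism and overstates the conclusion. The paper takes $\epsilon_n=e^{-n}$, $r_n=n^2\epsilon_n$, so that $\mu(X\setminus E_n)\le n^{-2}$ is summable, and invokes Borel--Cantelli: some tail intersection $F=\bigcap_{n\ge m}E_n$ has \emph{positive} (not full) measure, and it is this single fixed $F$ that satisfies $\mathcal N_{r_n}(F)\le\mathcal Q_\mu(\epsilon_n)$ for all $n\ge m$, whence $\underline\Psi_B F\le\underline\Psi_Q\mu$ via \cref{contdiscr} and \cref{lem inegalite}. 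The full-measure set $\bigcup_m\bigcap_{n\ge m}E_n$ has no controlled covering numbers, and box scales are not countably stable, so one cannot upgrade to a full-measure set; this is exactly why $(b)$, $(d)$ involve $\underline\Psi_B\mu$ (infimum over positive-mass sets) rather than $\underline\Psi_B^*\mu$. You should make the summable choice of radii and the Borel--Cantelli step explicit, and then feed the resulting positive-mass set both into $(b)$, $(d)$ and into the restricted-measure argument for $(e)$, $(g)$.
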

Inequalities $(b)$ and $(d)$  are part of \cref{lem quant boite rep au pb} and rely mainly on the use of the Borel-Cantelli lemma. Even in the specific case of dimension, these inequalities are new, as far as we know.  
Inequalities $(e)$ and $(g)$  were shown by  P\"otzelberger in  \cite{potzelberger1999quantization} for dimension and in $[ 0, 1]^d$. A new approach for the general case is brought in \cref{rep au pb}. We deduce the inequality $(a)$  from $ (e) $ and $(f)$ and the inequality $(c)$ from $(g)$ and $(h)$. The proof of inequalities $(f)$ and $(h)$ is straightforward, see \cref{quantisation boite}.

As a direct application,  inequality $ (e)$ allows us to answer a problem set by Berger in \cite{berger2020complexities} (see \cref{app emergence}). 
We will give in \cref{scales of group} examples of spaces for which the different versions of orders do not coincide.
\medskip
\subsection{Applications \label{applicationscales}} 

Let us see how our main theorems easily imply the equality between the different scales of some natural infinite dimensional spaces.

\subsubsection{Wiener measure} \label{lieu wiener}
The first example is the computation of the orders of the Wiener measure $W$ that describes uni-dimensional standard Brownian motion on $ [0,1 ] $. Recall that $W$ is the law of a continuous process $ ( B_t)_{ t \in [0,1] }$ with independent increments. It is such that for any $ t \ge s$  the law of the random variable $ B_t - B_s $ is $ \mathcal{N} ( 0, t-s )  $. 
Computation of the local scales of the Wiener measure relies on small ball estimates which received much interest \cite{Cameron1944TheWM,chung1947maximum, baldi1992some, kuelbs1993metric}. These results gave asymptotics on the measure of small balls centered at $0$ for  $L^p$ norms  and H\"older norms. 
Moreover, for a random ball, Dereich-Lifshits made the following estimate for $L^p$-norms:
\begin{theorem}[Dereich-Lifshits {\cite{dereichproba05}[3.2, 5.1, 6.1, 6.3]}] \label{ord loc Wiener}
For the Wiener measure on $ C^0([ 0,1 ] , \R ) $  endowed with the $L^p$-norm, for $ p \in [1, \infty]$, there exists \footnote{ Note that for $ p <  \infty$, the constant $ \kappa$ does not depend on the value of $p$. } a constant $ \kappa > 0$ such that for  $W$-almost any $ \omega  \in C^0 ( [0,1] , \R ) $: 
\[  - \epsilon^2 \cdot \log   W ( B(\omega, \epsilon ))  \rightarrow \kappa, \text{when $\epsilon \rightarrow 0$ } \; ,   \]
and moreover the quantization number of $W$ verifies: 

\[ \epsilon^2 \cdot \log   \mathcal{Q} _W ( \epsilon )  \rightarrow \kappa, \text{when $\epsilon \rightarrow 0$ } \;  .    \]
\end{theorem}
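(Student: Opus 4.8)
This is a theorem of Dereich and Lifshits, and I would only indicate how one reproves it; the two displayed assertions are tied together by the general principle that the high-resolution quantization number of a Gaussian measure on a separable Banach space is governed by its small-ball function. Three points are needed: \textbf{(a)} the almost sure small-ball exponent $-\epsilon^{2}\log W(B(\omega,\epsilon))\to\kappa$; \textbf{(b)} the matching exponent $\epsilon^{2}\log\mathcal Q_{W}(\epsilon)\to\kappa$; and \textbf{(c)} the independence of $\kappa$ from $p\in[1,\infty)$.

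For \textbf{(a)} I would expand $B$ in its Karhunen--Lo\`eve basis on $L^{2}([0,1])$, namely $B=\sum_{k\ge1}\sqrt{\lambda_{k}}\,\xi_{k}\,e_{k}$ with $\lambda_{k}=\big((k-\tfrac12)\pi\big)^{-2}\asymp k^{-2}$, $(\xi_{k})_{k\ge1}$ i.i.d.\ $\mathcal N(0,1)$ and $(e_{k})_{k\ge1}$ an orthonormal basis, treating first the case $p=2$. Conditionally on $\omega=\sum_{k}\omega_{k}e_{k}$ one has $\|B-\omega\|_{2}^{2}=\sum_{k}(\sqrt{\lambda_{k}}\,\xi_{k}-\omega_{k})^{2}$. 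An upper bound for $W(B(\omega,\epsilon))$ is obtained by projecting onto the first $n\asymp\epsilon^{-2}$ coordinates and controlling the Gaussian density on the resulting ball, and a matching lower bound by forcing the coordinates $k\le n$ to lie within a window $\delta_{k}$ of $\omega_{k}$ and the tail $\sum_{k>n}\lambda_{k}\xi_{k}^{2}$ to be $o(\epsilon^{2})$, which is possible since $\sum_{k>n}\lambda_{k}\asymp n^{-1}\asymp\epsilon^{2}$. After optimizing in $(\delta_{k})_{k\le n}$ and in $n$, both bounds are controlled by $\tfrac12\sum_{k\le n}\xi_{k}(\omega)^{2}$ together with deterministic terms built from $\log\lambda_{k}$ and the volume of the $n$-dimensional Euclidean ball; the law of large numbers gives $\sum_{k\le n}\xi_{k}(\omega)^{2}\sim n$ for $W$-a.e.\ $\omega$, the logarithmic contributions cancel, and one is left with $-\log W(B(\omega,\epsilon))\sim\kappa\,\epsilon^{-2}$, where $\kappa$ is the value of an explicit extremal problem. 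A short concentration argument (e.g.\ a martingale along the coordinate filtration) promotes this from convergence in probability to the stated almost sure convergence.

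For \textbf{(b)} I would bound $\mathcal Q_{W}(\epsilon)$ from above by product quantization of the Karhunen--Lo\`eve coordinates: allotting $N_{k}$ codepoints to the coordinate $\mathcal N(0,\lambda_{k})$ under the budget $\sum_{k}\log N_{k}\le\log N$, the mean distortion $\int_{X}d(x,\{c_{i}\})\,dW(x)$ is at most $\big(c\sum_{k}\lambda_{k}N_{k}^{-2}\big)^{1/2}$, and optimizing the bit allocation produces a codebook of cardinality $\exp\!\big((\kappa+o(1))\epsilon^{-2}\big)$ with distortion $\le\epsilon$. For the lower bound, any codebook $\{c_{i}\}_{i\le N}$ with distortion $\le\epsilon$ satisfies $\sum_{i\le N}W(B(c_{i},2\epsilon))\ge\tfrac12$ by Markov's inequality; the crude estimate $W(B(c_{i},2\epsilon))\le W(B(0,2\epsilon))$ from Anderson's inequality has too small an exponent, so instead one appeals to the sharp small-ball/quantization correspondence for Gaussian measures (Dereich and coauthors): a volumetric argument in $C^{0}([0,1],\R)$ forces an $\epsilon$-distortion codebook to have cardinality at least $\exp\!\big((\kappa-o(1))\epsilon^{-2}\big)$, with the \emph{same} $\kappa$ that governs the $W$-typical ball mass from \textbf{(a)}. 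Combining the two bounds gives $\epsilon^{2}\log\mathcal Q_{W}(\epsilon)\to\kappa$.

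Part \textbf{(c)}, and the parallel treatment of the sup-norm, is where the real work sits. For $p=2$ all constants are explicit; for general $p\in[1,\infty)$ the point is that the bulk of $\|B-\omega\|_{L^{p}}$ is carried by a long, slowly varying portion of the path, so the small-deviation constant does not feel the exponent $p$ --- the phenomenon identified by Lifshits and coauthors --- whereas $p=\infty$ requires a separate chaining and entropy argument. I expect the principal obstacle to be precisely the extraction of the \emph{exact} value of $\kappa$ (as opposed to merely the order $\epsilon^{-2}$): this needs the sharp large-deviation analysis of the Karhunen--Lo\`eve series for $p=2$, the $p$-insensitivity of the bulk for $p<\infty$, and the two-sided small-ball/quantization link \emph{with matching constants} --- each of which is a substantial analytic input, and all of which is imported in the present paper from \cite{dereichproba05}.
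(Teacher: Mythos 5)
The paper does not prove this statement: it is imported verbatim from Dereich--Lifshits, with the theorem environment serving only as a citation of \cite{dereichproba05}[Results 3.2, 5.1, 6.1, 6.3], and the surrounding text uses it as a black box to deduce \cref{brownian}. So there is no in-paper argument to compare yours against; the only fair benchmark is the cited source itself.

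Taken on its own terms, your sketch is a faithful roadmap of the actual Dereich--Lifshits strategy and gets the two genuinely delicate points right: first, that the relevant small-ball function is the \emph{randomly centered} one, whose exponential constant differs from the centered constant (so Anderson's inequality in your lower bound for $\mathcal{Q}_W$ is correctly flagged as insufficient), and second, that the quantization asymptotics are obtained by a two-sided correspondence with that randomly centered small-ball function \emph{with matching constants}, not merely matching orders $\epsilon^{-2}$. Your Karhunen--Lo\`eve reduction for $p=2$, the shift cost $\tfrac12\sum_{k\le n}\xi_k(\omega)^2\sim n/2$ with $n\asymp\epsilon^{-2}$, and the Markov-inequality covering bound $\sum_i W(B(c_i,2\epsilon))\ge\tfrac12$ are all sound. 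But as you acknowledge, every quantitative step that actually produces the theorem --- extraction of the exact $\kappa$, promotion to almost sure convergence, the matching-constant quantization link, the $p$-insensitivity for $p<\infty$, and the separate chaining argument for $p=\infty$ --- is deferred to \cite{dereichproba05}. That leaves your text in exactly the same logical position as the paper's: a citation, here usefully annotated with an outline of what the citation contains. There is no gap relative to the paper, but neither document proves the statement.
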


As a direct consequence of \cref{resultsB} and \cref{resultsC}  we get that the new invariants we introduced for a measure with growth given by $ \ord$ all coincide: 
\begin{theo}[Orders of the Wiener measure] \label{brownian}
The Wiener measure on $ C^0([ 0,1 ] , \mathbb R ) $  endowed with the $L^p$-norm, for $ p \in [1, \infty]$, verifies for $W$ almost every $\omega \in C^0 ( [0,1]) $ : 
\begin{align*}
   2 &=  \underline \ord_{\loc} ( \omega )  = \ord_H W = \ord^*_H W =  \underline \ord_B W = \underline \ord_Q  W\\
   &= \overline \ord_{\loc} ( \omega )  =  \ord_P W = \ord^*_P W = \overline \ord_B W =  \overline \ord_Q W \; .
\end{align*}
\end{theo}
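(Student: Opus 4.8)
\textbf{Proof plan for \cref{brownian}.}

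The strategy is to invoke the comparison theorems with the scaling $\Psi = \ord$ (which is a scaling by the promised \cref{ex scl}, item $2$ of \cref{example of growth}) and feed in the Dereich--Lifshits asymptotics of \cref{ord loc Wiener} as the single quantitative input. First I would record the chain of inequalities that the theorems give for \emph{any} Borel measure $\mu$ and \emph{any} scaling: combining the two displays of \cref{resultsC} with \cref{resultsB} and \cref{resultsA} yields
\[
\infess \underline\Psi_{\loc}\mu \;\le\; \Psi_H\mu \;\le\; \{\,\underline\Psi_B\mu,\ \Psi_P\mu\,\} \;\le\; \overline\Psi_B\mu \;\le\; \overline\Psi_Q\mu \;\le\; \overline\Psi^*_B\mu,
\]
and dually, using $\Psi_H^*\mu = \supess\underline\Psi_{\loc}\mu$, $\Psi_P^*\mu = \supess\overline\Psi_{\loc}\mu$ together with inequalities $(c),(e),(f),(g),(h)$, the remaining starred quantities and $\underline\Psi_Q\mu$, $\overline\Psi_Q\mu$ all sit inside the interval bounded below by $\infess\underline\Psi_{\loc}\mu$ and above by $\supess\overline\Psi_{\loc}\mu$. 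So for \emph{any} measure, once one shows that the local scales are almost everywhere constant and equal to a common value $s$, and that the quantization scale equals the same $s$, every one of the ten invariants listed in the theorem is squeezed to $s$.

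The second step is therefore purely a matter of reading off $s=2$ from \cref{ord loc Wiener}. Fix $p\in[1,\infty]$; by that result there is $\kappa>0$ such that, for $W$-a.e. $\omega$, one has $-\epsilon^2\log W(B(\omega,\epsilon))\to\kappa$, i.e. $W(B(\omega,\epsilon)) = \exp(-(\kappa+o(1))\epsilon^{-2})$. I would now compute $\underline\ord_{\loc}W(\omega)$ and $\overline\ord_{\loc}W(\omega)$ directly from \cref{def loc} with $\psi_\alpha(\epsilon)=\exp(-\epsilon^{-\alpha})$: the ratio $W(B(\omega,\epsilon))/\psi_\alpha(\epsilon) = \exp(\epsilon^{-\alpha} - (\kappa+o(1))\epsilon^{-2})$ tends to $0$ precisely when $\alpha<2$ and to $+\infty$ precisely when $\alpha>2$ (the constant $\kappa$ being irrelevant since $\epsilon^{-\alpha}$ and $\epsilon^{-2}$ are of incomparable order whenever $\alpha\neq2$). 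Hence $\underline\ord_{\loc}W(\omega) = \overline\ord_{\loc}W(\omega) = 2$ for $W$-a.e. $\omega$, so both $\infess$ and $\supess$ of the lower and upper local scales equal $2$. The very same computation applied to $\mathcal Q_W(\epsilon) = \exp((\kappa+o(1))\epsilon^{-2})$ in \cref{def quant} gives $\underline\ord_Q W = \overline\ord_Q W = 2$. Plugging these two facts into the squeeze from the first step collapses the whole list to $2$.

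There is essentially no hard obstacle here once the general comparison theorems are in hand; the proof is an application. The one point requiring a little care is bookkeeping: one must check that the \emph{particular} web of inequalities in \cref{fig:my_label} actually connects all ten invariants appearing in \cref{brownian} into a single squeezed interval, rather than leaving some invariant (e.g. $\underline\ord_B W$ versus $\Psi_P W$, which are not directly comparable in general) outside the sandwich. This is handled by noting that $\infess\underline\Psi_{\loc}\mu = \supess\underline\Psi_{\loc}\mu$ here forces $\Psi_H\mu = \Psi_H^*\mu$ and collapses the lower branch, and likewise $\infess\overline\Psi_{\loc}\mu = \supess\overline\Psi_{\loc}\mu$ collapses the packing/upper branch, after which $(b)$, $(d)$, $(e)$, $(f)$ pin the quantization scales and both box scales in between — so the coincidence of the extreme local scales is exactly the degeneracy that forces global equality. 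I would close by remarking that the argument uses nothing specific to Brownian motion beyond the two limits in \cref{ord loc Wiener}, so it applies verbatim to any measure whose small-ball probability and quantization number have matching $\exp(-\Theta(\epsilon^{-2}))$ asymptotics.
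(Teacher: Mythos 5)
Your proposal is correct and follows essentially the same route as the paper: read off the local and quantization orders ($=2$) from the Dereich--Lifshits asymptotics, use \cref{resultsB} to identify the Hausdorff and packing orders with the (a.e.\ constant) local orders, and use \cref{resultsC} to squeeze the box orders between $\infess\underline\ord_{\loc}W$ and $\overline\ord_Q W$. The only difference is that you spell out the elementary computation translating the limits $-\epsilon^2\log W(B(\omega,\epsilon))\to\kappa$ and $\epsilon^2\log\mathcal{Q}_W(\epsilon)\to\kappa$ into the values of the scales, which the paper treats as immediate.
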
 
In particular, the framework of scales allows us to define Hausdorff, packing and box orders which are equal to $2$ for the Wiener measure. This indicates some kind of constant geometric property of subsets of maps with positive Wiener measure. 
\begin{proof}
By \cref{ord loc Wiener}, for $W$-almost $ \omega$ and for any $ p \in [1, \infty]$, in the $L^p$-norm it holds: 
\[   \underline \ord_{\loc} W(\omega) = \overline \ord_{\loc} W ( \omega) = 2  =  \underline \ord_{Q} W= \overline \ord_{Q} W \; . \] 
Now by \cref{resultsB}, it holds:
  \[     \ord_H W  =  \underline \ord_{\loc} W(\omega) =  \ord^*_H W   \qand  \ord_P W = \overline \ord_{\loc} W (\omega) =  \ord^*_P W   \; . \] 
Finally, since by \cref{resultsC} we have: 
\[  \overline \ord_Q W \ge \overline \ord_B W \ge \underline \ord_B W \ge \ord_H W \;, \]
the desired result comes by combining the three above lines of equalities and inequalities. 		
\end{proof}

\
\begin{remark}
Since $ (C^0( [0,1 ] , \R ) , L^p ) $  is not (totally) bounded, as well as any of its subsets with total mass, it holds $ \underline  \ord^*_B = + \infty $.  
\end{remark}

\subsubsection{Functional spaces endowed with the $C^0$-norm \label{app func}}
Let $d $ be a positive integer. For any integer $ k  \ge 0 $ and for any $\alpha \in (0,1]$ denote:  
\[ \mathcal F^{d,k,0} :=  \left\{ f \in C^k ( [0,1]^d, [-1,1]) : \| f \|_{C^k} \le 1 \right\} \]
and\[ \mathcal F^{d,k,\alpha} :=  \left\{ f \in C^k ( [0,1]^d, [-1,1]) : \| f \|_{C^k} \le 1 \text{ and $D^k f$ is $\alpha$-H{\"o}lder with constant $1$ } \right\} \; .  \]
We endow these spaces with the $C^0$ uniform norm (see \cref{proof func} for details). 

Kolmogorov and Tikhomirov gave the following asymptotics:

\begin{theorem}[Kolmogorov-Tikhomirov, {\cite{tikhomirov1993varepsilon}[Thm XV]}] \label{KT} 
Let $d $ be a positive integer. For any integer $ k $ and for any $\alpha \in [0,1]$, there exist two constants $ C_1 > C_2 > 0$ such that for every $ \epsilon > 0$,  the covering number $ \mathcal{N}_{\epsilon} ( \mathcal{F}^{d, k, \alpha} ) $ of the space $(  \mathcal{F}^{d,k,\alpha}, \| \cdot \|_\infty ) $ verifies:  
\[ C_1  \cdot \epsilon^{-\frac{d}{ k + \alpha } }  \ge   \log \mathcal{N}_\epsilon ( \mathcal{F}^{d, k, \alpha} )  \ge  C_2  \cdot  \epsilon^{-\frac{d}{ k + \alpha } } \; .   \]
\end{theorem}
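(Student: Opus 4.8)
Write $s=k+\alpha$; the goal is $\log\mathcal N_\epsilon(\mathcal F^{d,k,\alpha})\asymp\epsilon^{-d/s}$ as $\epsilon\to 0$, with implied constants depending only on $d,k,\alpha$. I would prove the two inequalities separately, the lower one by a packing argument and the upper one by a multiscale encoding.

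For the lower bound the plan is a bump-function packing. Fix once and for all a $C^\infty$ function $\phi$ with $\supp\phi\subset(0,1)^d$, $\phi\ge 0$, $\phi\not\equiv 0$, normalized so that $\|\phi\|_{C^k}\le 1$ and (when $\alpha>0$) $D^k\phi$ is $\alpha$-H\"older with constant $\le 1$, and set $c_0=\max\phi>0$. For $\delta\in(0,1)$ put $\phi_\delta(y)=\delta^{s}\phi(y/\delta)$; a one-line scaling check gives $\|\phi_\delta\|_{C^k}\le 1$, $D^k\phi_\delta$ still $\alpha$-H\"older with constant $\le 1$, $\|\phi_\delta\|_\infty=c_0\delta^s$, and $\supp\phi_\delta\subset(0,\delta)^d$. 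Tile $[0,1]^d$ by $N=\lfloor 1/\delta\rfloor^d$ disjoint translates of $(0,\delta)^d$ and, for $\sigma\in\{0,1\}^N$, set $f_\sigma=\sum_i\sigma_i\,\phi_\delta(\cdot-a_i)$. Disjointness of supports yields $f_\sigma\in\mathcal F^{d,k,\alpha}$ for $\delta$ small, while $\|f_\sigma-f_{\sigma'}\|_\infty\ge c_0\delta^s$ whenever $\sigma\ne\sigma'$. Choosing $\delta$ with $c_0\delta^s=3\epsilon$ makes $\{f_\sigma\}_\sigma$ a $3\epsilon$-separated set of cardinality $2^N$, so $\mathcal N_\epsilon\ge 2^N$ with $N\asymp\epsilon^{-d/s}$, which is the right-hand inequality.

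For the upper bound I would use a \emph{multiscale} polynomial encoding; the whole difficulty is to avoid the spurious factor $\log(1/\epsilon)$ that a single-scale Taylor encoding produces. For $j\ge 0$ let $\mathcal D_j$ be the dyadic partition of $[0,1]^d$ into $2^{jd}$ cubes of side $2^{-j}$, and for $f\in\mathcal F^{d,k,\alpha}$ let $f_j$ be the function equal on each $Q\in\mathcal D_j$ to the degree-$\le k$ Taylor polynomial of $f$ at the centre of $Q$ (degree $\le k-1$ when $\alpha=0$); to keep the encoded functions continuous one may instead take a fixed spline quasi-interpolant with the same locality and polynomial reproduction, at the cost of routine bookkeeping. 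Taylor's theorem with the H\"older bound on $D^k f$ gives $\|f-f_j\|_{\infty,Q}\lesssim 2^{-js}$ for each $Q\in\mathcal D_j$, and since every $Q\in\mathcal D_j$ lies inside a cube of $\mathcal D_{j-1}$, the increment $\Delta_j:=f_j-f_{j-1}$ restricted to $Q$ is a degree-$\le k$ polynomial with $\|\Delta_j\|_{\infty,Q}\lesssim 2^{-js}$. Fix $J$ with $2^{-Js}\asymp\epsilon$; then $f=f_0+\sum_{j=1}^J\Delta_j+(f-f_J)$ with $\|f-f_J\|_\infty\lesssim\epsilon$. Now encode: the degree-$\le k$ polynomials on a cube with sup-norm $\le M$ form a radius-$M$ ball in a fixed $D:=\binom{d+k}{d}$-dimensional normed space, hence have a $\rho$-net of size $\le(CM/\rho+1)^D$. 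Apply this on every $Q\in\mathcal D_j$ with $M\asymp 2^{-js}$ and $\rho=\rho_j:=\epsilon\,2^{-(J-j)}$, and round $f_0$ to precision $\rho_0$. Since $M/\rho_j\asymp 2^{(J-j)(s+1)}$ and $\sum_j\rho_j\le 2\epsilon$, the rounded functions $\tilde f$ form an $O(\epsilon)$-net, and taking logarithms
\[
\log\mathcal N_{C\epsilon}\;\lesssim\;\log(1/\epsilon)+D\sum_{j=0}^{J}2^{jd}(J-j+1)\;\lesssim\;2^{Jd}\;\asymp\;\epsilon^{-d/s},
\]
using $\sum_{i\ge0}(i+1)2^{-id}<\infty$; rescaling $\epsilon$ absorbs $C$, and for $\alpha=0$ one replaces $s$ by $k$ throughout.

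The main obstacle is exactly this last count. The naive one-scale scheme partitions $[0,1]^d$ into $\asymp\epsilon^{-d/s}$ cubes of side $\asymp\epsilon^{1/s}$ and rounds the $\le k$-jet of $f$ at each centre independently; each of the $\asymp\epsilon^{-d/s}$ blocks then costs $\asymp\log(1/\epsilon)$ bits, yielding only $\log\mathcal N_\epsilon\lesssim\epsilon^{-d/s}\log(1/\epsilon)$. The multiscale encoding defeats this because the scale-$j$ correction $\Delta_j$ is already of size $2^{-js}$, so relative to the precision $\rho_j$ it is pinned down by only $O(J-j)$ bits per cube, and the geometric weights make $\sum_j 2^{jd}(J-j)$ of the same order as its top term $2^{Jd}$. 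A secondary, purely technical point is producing genuinely continuous net elements (handled by the quasi-interpolant remark or by passing to a net with centres in $\mathcal F^{d,k,\alpha}$ at the cost of a harmless constant).
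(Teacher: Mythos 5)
This theorem is not proved in the paper: it is quoted verbatim from Kolmogorov--Tikhomirov \cite{tikhomirov1993varepsilon} and used as a black box, so there is no internal proof to compare against. What you have supplied is an independent proof, and it is essentially the standard modern argument and essentially correct: the lower bound by packing with disjointly supported rescaled bumps, and the upper bound by a dyadic multiscale polynomial encoding. You correctly identify the one genuine difficulty (a single-scale Taylor encoding costs an extra $\log(1/\epsilon)$) and the correct mechanism for removing it, namely that the scale-$j$ increment is already of size $2^{-js}$ so each cube at level $j$ costs only $O(J-j+1)$ bits, and $\sum_{j\le J}2^{jd}(J-j+1)\lesssim 2^{Jd}$. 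The counting, the choice $\rho_j=\epsilon 2^{-(J-j)}$, and the conversion to an internal net are all fine.

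Two small repairs are needed. First, in the lower bound the hedge ``for $\delta$ small'' does not put $f_\sigma$ into $\mathcal F^{d,k,\alpha}$: under your normalization $\phi_\delta=\delta^{s}\phi(\cdot/\delta)$ the $\alpha$-H\"older seminorm of $D^k\phi_\delta$ is exactly that of $D^k\phi$, independently of $\delta$, and for $x,y$ lying in the supports of two \emph{different} bumps the estimate obtained by passing through the boundary points (where $D^k\phi_\delta$ vanishes) gives H\"older constant $2$ rather than $1$. You must normalize $\phi$ so that $D^k\phi$ is $\alpha$-H\"older with constant at most $\tfrac12$; the sup-norms of the derivatives are unaffected since they do not accumulate over disjoint supports, and only $c_0$ changes. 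Second, the degenerate case $k=\alpha=0$ must be excluded ($\mathcal F^{d,0,0}$ is not totally bounded), though that caveat is equally implicit in the statement as the paper uses it. Finally, it is worth observing that your lower-bound construction is the single-scale shadow of what the paper does prove in this direction: the proof of \cref{lem diff} embeds the infinite product $\Lambda=\prod_{n}\{-1,0,+1\}^{R^{dn}}$ into $\mathcal F^{d,k,\alpha}$ by superposing such bump families at all scales $R^{-n}$ with summable weights $\epsilon_n$. That strengthening is what yields a lower bound on the \emph{Hausdorff} order, which \cref{KT} alone cannot give; your $2^N$ separated functions at a single scale bound only the covering number, i.e.\ the box order.
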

In \cref{proof of lem diff}, we will embed an infinite-dimensional Kantor set into \(\mathcal{F}^{k,d,\alpha}\) via an expanding map. This embedding will enable us to prove:
\begin{lemma}\label{lem diff}
Let $d $ be a positive integer. For any integer $ k $ and for any $\alpha \in [0,1]$,   it holds: 
\[ \ord_H \mathcal{F}^{d,k, \alpha} \ge \frac{d}{k + \alpha} \; . \]
\end{lemma}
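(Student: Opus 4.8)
Here is how I would go about it. Since \cref{KT} (combined with \cref{resultsA}) already shows that the box, packing and Hausdorff \emph{orders} of $\mathcal F^{d,k,\alpha}$ are all at most $d/(k+\alpha)$, the content of the lemma is the matching lower bound for the \emph{Hausdorff} order; and a lower bound on a Hausdorff scale cannot be read off from covering numbers alone. The plan is to manufacture inside $\mathcal F^{d,k,\alpha}$ a bi-Lipschitz copy of a compact ``fractal'' abelian group $G$ whose Hausdorff order we can compute to be $d/(k+\alpha)$, and then to invoke two soft properties: the Hausdorff scale is monotone under inclusion, and (being a bi-Lipschitz invariant) is preserved by the embedding. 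We may assume $k+\alpha>0$, the case $k=\alpha=0$ being degenerate.

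Concretely, I would fix a smooth bump $\phi\colon\R^d\to[0,1]$ supported in the open cube $(0,1)^d$ with $\|\phi\|_{C^k}\le 1$ and $\alpha$-Hölder constant of $D^k\phi$ at most $1$; fix a suitably (e.g. geometrically) increasing sequence of integers $M_n$; and set $a_n:=c\,M_n^{-(k+\alpha)}n^{-2}$ for a small constant $c>0$. For each $n$, index by $Q$ the $(M_n)^d$ cubes of side $1/M_n$ tiling $[0,1]^d$, and let $\phi_{n,Q}$ be $\phi$ rescaled and translated onto $Q$. Then take $G:=\prod_{n\ge 1}\{0,1\}^{(M_n)^d}$, a compact group under coordinatewise addition mod $2$, with the translation-invariant ultrametric $\rho(g,g')=a_{n}$ where $n$ is the least level at which $g$ and $g'$ disagree, and define $\iota(g)=\sum_{n\ge 1}a_n\sum_Q g_{n,Q}\,\phi_{n,Q}$.

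There are then three things to check. First, $\iota(G)\subseteq\mathcal F^{d,k,\alpha}$: the $n$-th layer has $C^k$-norm $\lesssim a_nM_n^{k}= c\,n^{-2}M_n^{-\alpha}$ and $D^k$-Hölder constant $\lesssim a_nM_n^{k+\alpha}=c\,n^{-2}$, both summable in $n$, so choosing $c$ small forces $\|\iota(g)\|_{C^k}\le 1$, values in $[-1,1]$, and the Hölder constant $\le 1$. Second, $\iota$ is bi-Lipschitz (in particular \emph{expanding}): if $g,g'$ first differ at level $n$, the level-$n$ contribution to $\iota(g)-\iota(g')$ has sup-norm $a_n$ on some cube while all later layers contribute $\le\sum_{m>n}a_m\le\frac12 a_n$, so $\frac12\rho(g,g')\le\|\iota(g)-\iota(g')\|_\infty\le\frac32\rho(g,g')$; hence $\ord_H\iota(G)=\ord_H G$. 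Third, $\ord_H G\ge d/(k+\alpha)$: the law of $(G,\rho)$ is $1$-Lipschitz, so by the group result of \cref{scales of group} one has $\ord_H G=\underline\ord_B G$, and for $\epsilon\asymp a_n$ a $\rho$-ball is a level-$n$ cylinder, whence $\log\mathcal N_\epsilon(G)\ge (M_n)^d\log 2\gtrsim \epsilon^{-d/(k+\alpha)}(\log\epsilon^{-1})^{-2d/(k+\alpha)}$; since for the scaling $\ord$, $\psi_\beta(\epsilon)=\exp(-\epsilon^{-\beta})$, a polylogarithmic factor cannot beat a strictly smaller power of $\epsilon^{-1}$, so $\mathcal N_\epsilon(G)\psi_\beta(\epsilon)\to+\infty$ for every $\beta<d/(k+\alpha)$, giving $\underline\ord_B G\ge d/(k+\alpha)$. (Alternatively, without the group theorem, put the Haar product measure $\mu$ on $G$; then $-\log\mu(B(x,\epsilon))\gtrsim\epsilon^{-d/(k+\alpha)}(\log\epsilon^{-1})^{-2d/(k+\alpha)}$ for every $x$, so $\underline\ord_{\loc}\mu\ge d/(k+\alpha)$ everywhere and $\ord_H G\ge\ord_H\mu=\infess\underline\ord_{\loc}\mu\ge d/(k+\alpha)$ by \cref{resultsB}.) Combining, $\ord_H\mathcal F^{d,k,\alpha}\ge\ord_H\iota(G)=\ord_H G\ge d/(k+\alpha)$.

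The main obstacle is the first verification — the multiscale bump bookkeeping ensuring $\iota(G)\subseteq\mathcal F^{d,k,\alpha}$: one must control the $C^k$-norm \emph{and} the $\alpha$-Hölder constant of $D^k$ of an infinite sum of bumps living at nested scales, which forces the extra summable weight $n^{-2}$ in $a_n$ (the contributions of distinct layers overlap, so the naive amplitude $c\,M_n^{-(k+\alpha)}$ gives a divergent Hölder seminorm), and the borderline case $\alpha=0,\ k\ge 1$ needs the same device for the $C^k$-norm itself. A secondary point to get right is that this weighting, and the lacunarity of the scales $a_n$, only costs polylogarithmic factors in $\log\mathcal N_\epsilon(G)$, which are invisible to the $\ord$-scaling, so the target exponent $d/(k+\alpha)$ is genuinely attained.
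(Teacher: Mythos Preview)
Your proposal is correct and follows essentially the same route as the paper: build a symbolic product space (the paper uses $\Lambda=\prod_n\{-1,0,1\}^{R^{dn}}$, you use $G=\prod_n\{0,1\}^{M_n^d}$), embed it into $\mathcal F^{d,k,\alpha}$ via multiscale bump functions with the same $n^{-2}$-summable amplitudes $\epsilon_n\asymp n^{-2}R^{-n(k+\alpha)}$, verify the embedding is expanding so that \cref{cor dil} applies, and read off $\ord_H$ of the model from the product-space computation of \cref{scales of group} (specifically \cref{toy example cor}/\cref{toy example order}). The only cosmetic differences are your choice of cube-supported bumps versus the paper's radial bump $\phi(t)=(2t)^q(2-2t)^q$, and that you handle $\alpha=0$ directly while the paper deduces it from $\mathcal F^{d,k,\beta}\subset\mathcal F^{d,k,0}$ as $\beta\downarrow 0$.
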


The above lemma together with  \cref{resultsA} gives the following consequence of  the theorem of Kolmogorov-Tikhomirov:  

\begin{theo}\label{KT+}
Let $d $ be a positive integer. For any integer $ k $ and for any $\alpha \in [0,1]$, it holds: 

\[  \ord_H \mathcal  F^{d,k,\alpha } = \ord_P \mathcal F^{d,k,\alpha} = \underline \ord_B \mathcal F^{d,k,\alpha} =  \overline \ord_B \mathcal F^{d,k,\alpha}  =\frac{d}{k + \alpha} \; .\]\end{theo}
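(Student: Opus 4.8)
The plan is to combine the two ingredients the excerpt has already set up: the Kolmogorov--Tikhomirov covering number asymptotics of \cref{KT} and the lower bound on the Hausdorff order of \cref{lem diff}, and then feed everything into the comparison inequalities of \cref{resultsA}. First I would verify that the family $\ord = (\epsilon \mapsto \exp(-\epsilon^{-\alpha}))_{\alpha>0}$ is indeed a scaling, so that \cref{resultsA} applies; this is exactly the content of \cref{ex scl} announced earlier, so I may invoke it. With that in hand, \cref{resultsA} gives the chain
\[
\ord_H \mathcal F^{d,k,\alpha} \le \ord_P \mathcal F^{d,k,\alpha} \le \overline\ord_B \mathcal F^{d,k,\alpha}
\qand
\ord_H \mathcal F^{d,k,\alpha} \le \underline\ord_B \mathcal F^{d,k,\alpha} \le \overline\ord_B \mathcal F^{d,k,\alpha},
\]
so it suffices to sandwich the two extreme terms, $\ord_H$ from below and $\overline\ord_B$ from above, by the same value $d/(k+\alpha)$.

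The lower bound $\ord_H \mathcal F^{d,k,\alpha} \ge d/(k+\alpha)$ is precisely \cref{lem diff}, which I take as given. For the upper bound on $\overline\ord_B$, I would unwind \cref{def box} for the scaling $\ord$: we have $\overline\ord_B X = \inf\{\alpha>0 : \mathcal N_\epsilon(X)\cdot \exp(-\epsilon^{-\alpha}) \to 0\}$. By \cref{KT}, $\log \mathcal N_\epsilon(\mathcal F^{d,k,\alpha}) \le C_1 \epsilon^{-d/(k+\alpha)}$, hence for any exponent $\beta > d/(k+\alpha)$ we get
\[
\log\!\big(\mathcal N_\epsilon(\mathcal F^{d,k,\alpha})\cdot \exp(-\epsilon^{-\beta})\big) \le C_1\,\epsilon^{-\frac{d}{k+\alpha}} - \epsilon^{-\beta} \xrightarrow[\epsilon\to 0]{} -\infty,
\]
since $\epsilon^{-\beta}$ dominates $\epsilon^{-d/(k+\alpha)}$; therefore the product tends to $0$ and $\overline\ord_B \mathcal F^{d,k,\alpha} \le \beta$. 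Letting $\beta \downarrow d/(k+\alpha)$ gives $\overline\ord_B \mathcal F^{d,k,\alpha} \le d/(k+\alpha)$. (Symmetrically, the lower bound $C_2\epsilon^{-d/(k+\alpha)} \le \log\mathcal N_\epsilon$ shows the reverse, $\underline\ord_B \ge d/(k+\alpha)$, though this is not even needed once the sandwich closes.)

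Putting the pieces together: the chain from \cref{resultsA} is pinched between $\ord_H \ge d/(k+\alpha)$ (from \cref{lem diff}) and $\overline\ord_B \le d/(k+\alpha)$ (from \cref{KT} as above), forcing all four quantities $\ord_H$, $\ord_P$, $\underline\ord_B$, $\overline\ord_B$ of $\mathcal F^{d,k,\alpha}$ to equal $d/(k+\alpha)$, which is the claimed equality. None of the steps here is a serious obstacle once \cref{lem diff} is granted: the box-scale upper bound is just the elementary asymptotic comparison $\epsilon^{-\beta} \gg \epsilon^{-d/(k+\alpha)}$ applied inside the definition of $\overline\ord_B$, and the rest is bookkeeping with \cref{resultsA}. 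The genuine work — embedding a group of controlled Hausdorff order into $\mathcal F^{d,k,\alpha}$ via an expanding map to produce the lower bound — lives in \cref{lem diff} and its proof in \cref{proof of lem diff}, not in this theorem; so in this write-up I would keep the argument short and point to those references for the substantive input.
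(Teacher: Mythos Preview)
Your argument is correct and follows essentially the same route as the paper: apply \cref{resultsA} to obtain the two chains of inequalities, then sandwich them using \cref{lem diff} for the lower bound on $\ord_H$ and \cref{KT} for the upper bound on $\overline\ord_B$. The only difference is that you spell out explicitly the elementary computation showing $\overline\ord_B \mathcal F^{d,k,\alpha} \le d/(k+\alpha)$ from the covering number estimate, whereas the paper simply reads this off directly from \cref{KT}.
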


\begin{proof}\label{proof KT+}

First, by \cref{resultsA}, it holds: 
\[       \ord_ H  \mathcal{F}^{d,k, \alpha} \le   \underline \ord_ B  \mathcal{F}^{d,k, \alpha} \le  \overline \ord_ B  \mathcal{F}^{d,k, \alpha}    \qand  \ord_ H  \mathcal{F}^{d,k, \alpha} \le   \ord_ P  \mathcal{F}^{d,k, \alpha} \le  \overline \ord_ B  \mathcal{F}^{d,k, \alpha}     \; .  \] 
From there, by \cref{KT} and \cref{lem diff}, it holds: 
\[     \frac{d}{k + \alpha}  \le  \ord_ H  \mathcal{F}^{d,k, \alpha} \le   \underline \ord_ B  \mathcal{F}^{d,k, \alpha} \le  \overline \ord_ B  \mathcal{F}^{d,k, \alpha}   = \frac{d}{k + \alpha}  \;,  \]
and 

\[  \frac{d}{k + \alpha} \le      \ord_ H  \mathcal{F}^{d,k, \alpha} \le   \ord_ P  \mathcal{F}^{d,k, \alpha} \le  \overline \ord_ B  \mathcal{F}^{d,k, \alpha}   = \frac{d}{k + \alpha}   \; .  \] 
From there, all of the above inequalities are indeed equalities, which gives the desired result. 
\end{proof} 

\subsubsection{Local and global emergence \label{app emergence}}

The framework of scales moreover allows us to answer a problem set by Berger in \cite{berger2020complexities} on the largeness of ergodic decomposition for wild dynamical systems. 
We now consider a compact metric space $ (X,d)$ and a measurable map $ f : X \to X$. 
We denote $\mathcal{M}$ the set of probability Borel measures on $ X$ and $ \mathcal{M}_f $ the subset of $ \mathcal{M}$ containing $f$-invariant measures. The space $\mathcal{M}$ is endowed with the Wasserstein distance $W_1$ defined by: 
\[ W_1( \nu_1,  \nu_2 ) = \sup_{ \phi \in \mathrm{Lip}^1 ( X) } \int \phi d(\nu_1 - \nu_2) \;,  \]
inducing the weak $*$- topology  for which $ \mathcal{M}$  is compact. 
A way to measure the wildness of a dynamical system is to measure how far from being ergodic an invariant measure $ \mu$  is. Then  by Birkhoff's theorem, given a measure $ \mu \in \mathcal{M}_f$, for $\mu$-almost every $x \in X $  the following measure is well defined: 
\[  e^f(x) := \lim_{ n\rightarrow  + \infty}  \frac{1}{n} \sum_{k = 0}^{n-1} \delta_{f^{k} (x) } \;, \]
and also this limit measure is ergodic.
The notion of \emph{emergence}, introduced by Berger, describes the size of the subset of ergodic measures that can be obtained by limits of empirical measures, given an $f$-invariant probability measure on $X$. 
\begin{definition}[Emergence, \cite{berger2017emergence,berger2021emergence}]
The \emph{emergence} of a measure  $\mu \in \mathcal{M}_f$  at $ \epsilon > 0 $ is defined by: 
\[ \mathcal{E}_\mu(\epsilon) = \min  \{  N \in \mathbb{N} \ : \ \exists \nu_1, \dots,  \nu_N \in \mathcal{M}_f,  \ \int_X W_1 ( e^f(x),  \{ \nu_i \}_{1 \le i \le N } ) d \mu (x) \le \epsilon \} \;  . \]
\end{definition}
Note that the emergence is the quantization number of the ergodic decomposition of $ \mu$. 
The case of high emergence corresponds to dynamics where the considered measure is far from being ergodic. The following result shows us that the order is an adapted scaling in the study of the ergodic decomposition. 
\begin{theorem}[ \cite{bolley2007quantitative,kloeckner2015geometric,berger2021emergence} \label{thm BBP}]
Let $(X, d)$ be a metric compact space of finite dimension, then:
\[ \underline \dim_B X  \le \underline \ord_B ( \mathcal{M}) \le \overline \ord_B ( \mathcal{M})  \le  \overline  \dim_B X \; . \]
\end{theorem}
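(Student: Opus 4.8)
The plan is to reduce the whole statement to a two-sided comparison between the $\epsilon$-covering numbers of $\mathcal M$ and of $X$, of the shape
\[ c\cdot\mathcal N_{C\epsilon}(X)\ \le\ \log\mathcal N_\epsilon(\mathcal M)\ \le\ C'\cdot\mathcal N_{c'\epsilon}(X)\cdot\log(1/\epsilon)\qquad(\epsilon\to 0), \]
with constants depending only on $\diam X$; this is in substance the covering estimate of Bolley--Guillin--Villani and Kloeckner, \cite{bolley2007quantitative,kloeckner2015geometric}. (I read the hypothesis ``of finite'' as $\overline\dim_B X<\infty$, so all four quantities are finite.) Granting this comparison, I would conclude as follows. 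Unwinding \cref{def box} for the scaling $\ord=(\epsilon\mapsto\exp(-\epsilon^{-\alpha}))_{\alpha>0}$ gives, for any totally bounded $Y$, $\underline\ord_B Y=\liminf_{\epsilon\to0}\frac{\log\log\mathcal N_\epsilon(Y)}{\log(1/\epsilon)}$ and $\overline\ord_B Y=\limsup_{\epsilon\to0}\frac{\log\log\mathcal N_\epsilon(Y)}{\log(1/\epsilon)}$, since $\mathcal N_\epsilon(Y)\,\psi_\alpha(\epsilon)=\exp(\log\mathcal N_\epsilon(Y)-\epsilon^{-\alpha})$. In this expression a multiplicative constant in front of $\log\mathcal N_\epsilon$, a rescaling $\epsilon\mapsto c\epsilon$, and a spurious factor $\log(1/\epsilon)$ are all invisible; combining with the eventual bounds $\mathcal N_\epsilon(X)\ge\epsilon^{-\underline\dim_B X+o(1)}$ and $\mathcal N_\epsilon(X)\le\epsilon^{-\overline\dim_B X-o(1)}$, the comparison yields $\underline\dim_B X\le\underline\ord_B\mathcal M$ and $\overline\ord_B\mathcal M\le\overline\dim_B X$. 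The middle inequality $\underline\ord_B\mathcal M\le\overline\ord_B\mathcal M$ is trivial, being also an instance of \cref{resultsA}.

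For the upper estimate I would fix an $\epsilon$-net $\{x_1,\dots,x_N\}$ of $X$ with $N=\mathcal N_\epsilon(X)$ together with a Borel partition $X=\bigsqcup_i V_i$, $x_i\in V_i\subset B(x_i,\epsilon)$, and push each $\mu\in\mathcal M$ to $\mu'=\sum_i\mu(V_i)\,\delta_{x_i}$, which costs $W_1(\mu,\mu')\le\epsilon$ (use the coupling $(\mathrm{id},T)_*\mu$ where $T$ collapses $V_i$ to $x_i$). The measures $\mu'$ range over the simplex $\Delta_{N-1}$ of probability vectors on $N$ atoms, on which $W_1(\sum a_i\delta_{x_i},\sum b_i\delta_{x_i})\le\tfrac12\diam(X)\,\|a-b\|_1$; an $\ell^1$-net of $\Delta_{N-1}$ at scale $2\epsilon/\diam(X)$ has at most $(3\diam(X)/2\epsilon)^{N}$ points by the usual volume bound, so $\log\mathcal N_{2\epsilon}(\mathcal M)\le\mathcal N_\epsilon(X)\cdot O(\log(1/\epsilon))$.

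For the lower estimate I would take a maximal $\epsilon$-separated subset $\{x_1,\dots,x_M\}$ of $X$, so that $M\ge\mathcal N_\epsilon(X)$, and use the co-Lipschitz inequality $W_1(\sum a_i\delta_{x_i},\sum b_i\delta_{x_i})\ge\tfrac\epsilon2\|a-b\|_1$ for atomic measures on this set (any coupling moves a total mass $\ge\mathrm{TV}(a,b)=\tfrac12\|a-b\|_1$ over distances $\ge\epsilon$). A Gilbert--Varshamov argument furnishes $\ge e^{cM}$ subsets $S\subset\{1,\dots,M\}$ with $|S|=\lfloor M/2\rfloor$ pairwise at Hamming distance $\ge M/4$; the associated uniform measures $\mu_S=\frac1{|S|}\sum_{i\in S}\delta_{x_i}$ differ in $\ell^1$ by $\ge 1/2$, hence are pairwise $W_1$-separated by $\ge\epsilon/4$, and therefore $\log\mathcal N_{\epsilon/8}(\mathcal M)\ge cM\ge c\,\mathcal N_\epsilon(X)$, which is the claimed bound.

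The hard part is the lower estimate: producing exponentially many (in $\mathcal N_\epsilon(X)$) pairwise $W_1$-separated probability measures. The crucial point making it work is the elementary co-Lipschitz lower bound for $W_1$ on atomic measures supported on an $\epsilon$-separated set; after that, a standard large-code construction in $\{0,1\}^M$ finishes the job. The upper estimate is the routine ``net of $X$ plus net of the simplex'' argument, the only subtlety being that the extra $\log(1/\epsilon)$ it loses is harmless --- which is exactly because $\ord_B$ sees $\mathcal N_\epsilon$ only through its double logarithm.
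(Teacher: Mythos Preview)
The paper does not supply its own proof of this statement: it is quoted as a known result from \cite{bolley2007quantitative,kloeckner2015geometric,berger2021emergence}, and is used only as motivation for why the scaling $\ord$ is the right growth for studying spaces of probability measures. So there is nothing in the paper to compare your argument against.

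That said, your proof is correct and is, in spirit, exactly the argument of those references. The reduction to the two-sided estimate on $\log\mathcal N_\epsilon(\mathcal M)$ in terms of $\mathcal N_\epsilon(X)$ is the heart of the matter; the upper bound via ``project to a net, then cover the simplex'' is the standard one, and your lower bound via a Gilbert--Varshamov family of uniform atomic measures on an $\epsilon$-separated set, together with the co-Lipschitz bound $W_1\ge \tfrac\epsilon2\|a-b\|_1$, is a clean way to produce exponentially many $W_1$-separated measures. Your remark that the extra $\log(1/\epsilon)$ in the upper bound and the multiplicative constants in $\epsilon$ are invisible after the double logarithm is exactly the point, and matches the paper's \cref{other expr} and \cref{lem inegalite}. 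Your reading of the truncated hypothesis ``of finite'' as $\overline\dim_B X<\infty$ is also the intended one.
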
 

Given a measure $ \mu \in \mathcal{M}_f$ we define its \emph{emergence order} by: 
\[ \overline{\ord} \mathcal{E}_\mu  := \limsup_{\epsilon \rightarrow 0} \frac{\log \log \mathcal{E}_\mu ( \epsilon ) }{- \log \epsilon} = \inf \left\{ \alpha > 0 : \mathcal{E}_\mu ( \epsilon ) \cdot  \exp( - \epsilon^{-\alpha}) \xrightarrow[\epsilon \rightarrow 0]{}  0 \right\} \; . \]

We  denote $ \mu_{e^f} := {e^f}_* \mu $ the \emph{ergodic decomposition} of $ \mu$; i.e. the push forward by $e^f$ of $ \mu$. A local analogous quantity to the emergence order is the local order of the ergodic decomposition of $ \mu$, for $ \nu \in \mathcal{M}_f$ it is defined by: 

\[ \overline \ord \mathcal{E}^{\loc}_\mu ( \nu )  := \limsup_{\epsilon\rightarrow 0 } \frac{\log - \log ( \mu_{e^f} ( B(\nu, \epsilon )) }{ - \log \epsilon} \; . \]
Berger asked if the following comparison between the asymptotic behavior of the mass of the balls of the ergodic decomposition of $ \mu$ and the asymptotic behavior of its quantization holds. 

\begin{problem}[Berger, {\cite[Pbm 4.22]{berger2020complexities}} \label{prob Berger}]
Let $(X,d)$ be a compact metric space, $f : X \to X$ a measurable map and $\mu$ an $f$-invariant probability measure on $X$. Does the following hold ? 
\[ \int_{\mathcal{M}_f}  \overline \ord  \mathcal{E}_\mu^{\loc} 
d \mu_{e^f}    \le  \overline \ord \mathcal{E}_\mu  \; .   \]

\end{problem}

We propose here a stronger result that answers to the latter problem as a direct application of \cref{resultsC}: 

\begin{theo}\label{ans Berger}

Let $(X,d)$ be a compact metric space, $f : X \to X$ a measurable map and $\mu$ an $f$-invariant probability measure on $X$. For $\mu_{e^f}$-almost every $\nu \in \mathcal{M}$, it holds: 
\[\overline \ord\mathcal{E}^{\loc}_\mu ( \nu ) \le \overline \ord \mathcal{E}_\mu \;  .  \]
\end{theo}

\begin{proof}
Note that  $\overline \ord\mathcal{E}^{\loc}_\mu  = \overline \ord_\loc \mu_{e^f}    $ and $  \overline \ord \mathcal{E}_\mu  = \overline \ord_Q \mu_{e^f}$. Now by \cref{resultsC}, it holds  $\mu_{e^f}$-almost surely  that  $ \overline \ord_\loc \mu_{e^f}     \le   \overline \ord_Q \mu_{e^f}$  which is the desired result. 
\end{proof}
\section{Metric scales} \label{lieu metric scl}
Metric scales will be bi-Lipschitz invariants generalizing Hausdorff, packing and box dimensions of metric spaces. 
Before defining and comparing metric scales we show a handful of basic properties of scalings and present some relevant examples.
\subsection{Scalings}
We first recall that a family  $ \Scl = (\scl_\alpha)_{\alpha \ge 0} $  of positive non-decreasing functions on  $(0, 1)$ is a \emph{scaling} when for any $ \alpha> \beta > 0 $ and any $\lambda > 1 $ close enough to $1$, it holds: 
\begin{equation}\label{def of scl} \tag{$ * $}
    \scl_\alpha  ( \epsilon)  = o \left(  \scl_\beta ( \epsilon^\lambda ) \right)   \qand   \scl_\alpha  ( \epsilon)  = o  \left(   \scl_\beta ( \epsilon ) ^\lambda  \right)   \, \text{ when }  \epsilon \rightarrow 0  \; . 
\end{equation}

An immediate consequence of the latter definition is the following: 
\begin{fact} \label{ineg cons}
Let $ \Scl $ be a scaling then for any $ \alpha > \beta > 0$ and for any constant  $ C > 0 $, for $ \epsilon > 0 $ small enough,  it holds:
\[ \scl_\alpha ( \epsilon ) \le \scl_\beta ( C \cdot \epsilon ) \; . \]
\end{fact}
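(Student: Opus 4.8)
The plan is to deduce the inequality from a single use of the first half of the scaling condition \eqref{def of scl}, trading the multiplicative perturbation $C\epsilon$ for a power $\epsilon^\lambda$ by exploiting that $\psi_\beta$ is non-decreasing.

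First I would choose $\lambda > 1$ close enough to $1$ so that \eqref{def of scl} applies to the pair $\alpha > \beta$; in particular $\psi_\alpha(\epsilon) = o(\psi_\beta(\epsilon^\lambda))$ as $\epsilon \to 0$, and since $\psi_\beta$ is positive this yields an $\epsilon_1 \in (0,1)$ with
\[
\psi_\alpha(\epsilon) \le \psi_\beta(\epsilon^\lambda) \qquad \text{for all } \epsilon \in (0,\epsilon_1).
\]
Next I would compare the two arguments: $\epsilon^\lambda \le C\epsilon$ is equivalent to $\epsilon^{\lambda-1} \le C$, and since $\lambda > 1$ the left side tends to $0$ as $\epsilon \to 0$, so there is $\epsilon_2 \in (0,1)$, depending on $C$ and $\lambda$, with $\epsilon^\lambda \le C\epsilon < 1$ for all $\epsilon \in (0,\epsilon_2)$. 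Monotonicity of $\psi_\beta$ on $(0,1)$ then gives $\psi_\beta(\epsilon^\lambda) \le \psi_\beta(C\epsilon)$ on $(0,\epsilon_2)$.

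Finally, for $\epsilon < \min(\epsilon_1,\epsilon_2)$ the two steps combine into $\psi_\alpha(\epsilon) \le \psi_\beta(\epsilon^\lambda) \le \psi_\beta(C\epsilon)$, which is the assertion. I do not expect any genuine obstacle: the only point requiring care is that the threshold on $\epsilon$ depends on $C$ (and on the chosen $\lambda$), consistently with the phrase ``for $\epsilon > 0$ small enough'', and that only the first of the two conditions in \eqref{def of scl} is needed here, the second (multiplicative) one playing no role.
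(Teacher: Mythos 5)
Your argument is correct and is precisely the ``immediate consequence'' the paper has in mind (the paper states \cref{ineg cons} without proof): use the first half of condition $(*)$ to get $\psi_\alpha(\epsilon)\le\psi_\beta(\epsilon^\lambda)$ for small $\epsilon$, note $\epsilon^\lambda\le C\epsilon<1$ once $\epsilon^{\lambda-1}\le C$, and conclude by monotonicity of $\psi_\beta$. You are also right that only the first condition in \eqref{def of scl} is used and that the threshold on $\epsilon$ may depend on $C$ and $\lambda$, which is consistent with the statement.
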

A consequence  of the latter fact is the following which will allow us to compare the different versions of scales: 
\begin{lemma}
\label{lem inegalite}
Let $f, g : \mathbb R_+^* \to \mathbb R_+^* $ be two functions such that  $f \le g$ near $0$. For any constant $ C > 0$, it holds: 
\[   \inf \left\{ \alpha > 0 : f(  C \cdot \epsilon)  \cdot \scl_\alpha(\epsilon) \xrightarrow[\epsilon \rightarrow 0]{}  0 \right\} \le \inf \left\{ \alpha > 0 : g( \epsilon) \cdot\scl_\alpha(\epsilon) \xrightarrow[\epsilon \rightarrow 0]{} 0 \right\} \]
and
\[  \sup \left\{ \alpha > 0 : f ( C \cdot  \epsilon) \cdot \scl_\alpha(\epsilon) \xrightarrow[\epsilon \rightarrow 0]{}  + \infty \right\} \le \sup \left\{ \alpha > 0 : g( \epsilon) \cdot  \scl_\alpha(\epsilon) \xrightarrow[\epsilon \rightarrow 0]{} +  \infty\right\} \; . \]
\end{lemma}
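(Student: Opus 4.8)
\textbf{Proof plan for Lemma \ref{lem inegalite}.}

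The plan is to reduce both inequalities to \cref{ineg cons} by a direct comparison of the defining sets of exponents. I will treat the first (infimum) inequality in detail; the second is symmetric. Fix $C>0$, and suppose $f\le g$ on $(0,\delta_0)$ for some $\delta_0>0$. Denote by $S_f$ the set $\{\alpha>0 : f(C\epsilon)\psi_\alpha(\epsilon)\to 0\}$ and by $S_g$ the set $\{\alpha>0 : g(\epsilon)\psi_\alpha(\epsilon)\to 0\}$; the claim is $\inf S_f\le \inf S_g$. It suffices to show that for every $\alpha\in S_g$ and every $\alpha'>\alpha$ we have $\alpha'\in S_f$, since then $\inf S_f\le \alpha'$ for all such $\alpha'$, hence $\inf S_f\le\alpha=\inf S_f\le\inf S_g$ after taking the infimum over $\alpha\in S_g$ (if $S_g=\emptyset$ the inequality is trivial, the right-hand side being $+\infty$).

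So fix $\alpha\in S_g$ and $\alpha'>\alpha$. By \cref{ineg cons} applied to the pair $\alpha'>\alpha$ and the constant $C>0$, there is $\delta_1>0$ such that $\psi_{\alpha'}(\epsilon)\le \psi_\alpha(C\epsilon)$ for all $\epsilon\in(0,\delta_1)$. Now estimate, for $\epsilon$ small enough that $C\epsilon<\delta_0$ and $\epsilon<\delta_1$,
\[
  0 \le f(C\epsilon)\,\psi_{\alpha'}(\epsilon) \le g(C\epsilon)\,\psi_{\alpha'}(\epsilon) \le g(C\epsilon)\,\psi_\alpha(C\epsilon),
\]
using $f\le g$ near $0$ in the second step and the \cref{ineg cons} bound in the third (all quantities are positive, so the inequalities are preserved). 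Since $\alpha\in S_g$, the function $\epsilon\mapsto g(\epsilon)\psi_\alpha(\epsilon)$ tends to $0$ as $\epsilon\to 0$, hence so does its reparametrization $\epsilon\mapsto g(C\epsilon)\psi_\alpha(C\epsilon)$ (composition with $\epsilon\mapsto C\epsilon$, which tends to $0$). By the squeeze, $f(C\epsilon)\psi_{\alpha'}(\epsilon)\to 0$, i.e. $\alpha'\in S_f$, as required.

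For the second inequality, replace "$\to 0$" by "$\to+\infty$" throughout, set $T_f=\{\alpha>0: f(C\epsilon)\psi_\alpha(\epsilon)\to+\infty\}$ and $T_g$ analogously, and show $\sup T_f\le\sup T_g$ by proving that every $\alpha\in T_f$ with $\alpha>\beta$ forces... actually the monotonicity now runs the other way: if $\alpha\in T_f$ then for $\beta<\alpha$ one uses \cref{ineg cons} in the form $\psi_\alpha(\epsilon)\le\psi_\beta(C'\epsilon)$ to push the divergence onto $g$. Concretely, for $\alpha\in T_f$ and $\beta<\alpha$, pick by \cref{ineg cons} a $\delta$ with $\psi_\alpha(\epsilon)\le\psi_\beta(\epsilon/C)$ for small $\epsilon$; then for small $\epsilon$,
\[
  g(\epsilon)\,\psi_\beta(\epsilon) \ge f(\epsilon)\,\psi_\beta(\epsilon) \ge f(\epsilon)\,\psi_\alpha(C\epsilon) \to +\infty,
\]
after the reparametrization $\epsilon\mapsto C\epsilon$, so $\beta\in T_g$; letting $\beta\uparrow\alpha$ and $\alpha$ range over $T_f$ yields $\sup T_f\le\sup T_g$. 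I do not anticipate a real obstacle here: the only non-formal ingredient is \cref{ineg cons}, which is already available, and the rest is bookkeeping with the quantifier "$\lambda$ close enough to $1$" hidden inside \cref{ineg cons} — the mild point to be careful about is simply that \cref{ineg cons} must be invoked with a strict inequality of exponents ($\alpha'>\alpha$ resp. $\beta<\alpha$), which is exactly why the argument passes to nearby exponents rather than working directly at $\alpha$.
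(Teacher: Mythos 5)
Your proof is correct and follows essentially the same route as the paper's: both reduce the statement to \cref{ineg cons}, trading the constant $C$ in the argument of $f$ for an arbitrarily small shift of the exponent and then reparametrizing $\epsilon \mapsto C\epsilon$. The only blemish is in the second half, where the reparametrization of $f(C\epsilon)\,\psi_\alpha(\epsilon)\to+\infty$ yields $f(\epsilon)\,\psi_\alpha(\epsilon/C)\to+\infty$ rather than $f(\epsilon)\,\psi_\alpha(C\epsilon)\to+\infty$; since \cref{ineg cons} holds for every positive constant, invoking it in the form $\psi_\alpha(\epsilon/C)\le\psi_\beta(\epsilon)$ repairs the chain verbatim.
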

\begin{proof}
It suffices to observe that, by \cref{ineg cons},  for any $ \alpha > \beta > 0$, and $ \epsilon >  0  $ small, it holds: 
\[ f( \epsilon) \cdot \scl_\alpha ( \epsilon) \le g( \epsilon) \cdot \scl_\beta  ( C^{-1}  \cdot  \epsilon)  = g (  C \cdot \tilde \epsilon ) \cdot \scl_\beta ( \tilde \epsilon )  \; ,  \]
with $ \tilde \epsilon = C \cdot \epsilon $. 
\end{proof}
The following will provide a sequential characterization of scales: 
\begin{lemma}[Sequential characterization of  scales]
\label{contdiscr}
Let $ \Scl$ be a scaling and $f : \mathbb R_+^* \to \mathbb R_+^* $ be a non increasing function. 
Let $(r_n) _{n \ge 1}$ be a sequence of positive real numbers  decreasing to $0$ and such that $ \log r_{n+1} \sim \log r_n$ as $n \rightarrow  + \infty$, then it holds:
\[ \inf \left\{ \alpha > 0 : f( \epsilon) \cdot\scl_\alpha(\epsilon) \xrightarrow[\epsilon \rightarrow 0]{}  0 \right\} = \inf \left\{ \alpha > 0 : f( r_n) \cdot  \scl_\alpha( r_n) \xrightarrow[n \rightarrow +  \infty]{} 0 \right\}  \; \]
and 
\[ \sup\left\{ \alpha > 0 : f( \epsilon) \cdot \scl_\alpha(\epsilon) \xrightarrow[\epsilon \rightarrow 0]{}  +  \infty \right\} =  \sup\left\{ \alpha > 0 : f( r_n ) \cdot \scl_\alpha(r_n ) \xrightarrow[n \rightarrow  + \infty]{}   + \infty  \right\} \; .\]
\end{lemma}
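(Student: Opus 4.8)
The plan is to show the two equalities by proving each as a pair of inequalities; I will only write the argument for the first equality (the $\inf$ one), since the second is entirely parallel, exchanging "$\to 0$" for "$\to +\infty$" and reversing the direction of the monotonicity estimates. Denote by $\alpha_0$ the left-hand side (the continuous $\inf$) and by $\alpha_1$ the right-hand side (the sequential $\inf$ along $(r_n)$).

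\textbf{The easy inequality $\alpha_1 \le \alpha_0$.} If $\alpha > \alpha_0$, then $f(\epsilon)\psi_\alpha(\epsilon) \to 0$ as $\epsilon \to 0$, and in particular the restriction to the subsequence $\epsilon = r_n$ tends to $0$; hence $\alpha$ is admissible for the sequential problem, so $\alpha_1 \le \alpha$. Taking the infimum over such $\alpha$ gives $\alpha_1 \le \alpha_0$. (This direction uses neither the scaling axioms nor the hypothesis $\log r_{n+1}\sim \log r_n$.)

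\textbf{The harder inequality $\alpha_0 \le \alpha_1$.} Fix $\alpha > \beta > \alpha_1$, so that $f(r_n)\psi_\beta(r_n) \to 0$. I want to upgrade convergence along $(r_n)$ to convergence along all of $(0,1)$, at the cost of going from exponent $\beta$ to the larger exponent $\alpha$. Given small $\epsilon \in (0,1)$, choose $n = n(\epsilon)$ with $r_{n+1} \le \epsilon \le r_n$ (possible for $\epsilon$ small, since $r_n \downarrow 0$). Since $f$ is non-increasing and $\psi_\alpha$ is non-decreasing,
\[ f(\epsilon)\,\psi_\alpha(\epsilon) \le f(r_{n+1})\,\psi_\alpha(r_n). \]
Now fix $\lambda > 1$ close enough to $1$ that the scaling condition \eqref{def of scl} applies to the pair $\alpha > \beta$: $\psi_\alpha(\delta) = o\big(\psi_\beta(\delta^\lambda)\big)$ as $\delta \to 0$. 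The hypothesis $\log r_{n+1} \sim \log r_n$ gives $\log r_n = (1+o(1))\log r_{n+1}$, hence for $n$ large $\log r_n \ge \lambda \log r_{n+1}$, i.e. $r_n \le r_{n+1}^{\,1/\lambda}$; equivalently $r_n^{\lambda} \le r_{n+1}$... wait, I need it the other way, so instead I use $\log r_{n+1} \ge \lambda^{-1}\log r_n$ for $n$ large, giving $r_n^{\lambda} \le$ ... let me be careful: I want to bound $\psi_\alpha(r_n)$ by $\psi_\beta(r_{n+1})$. Apply \eqref{def of scl} with $\delta = r_n$: $\psi_\alpha(r_n) = o\big(\psi_\beta(r_n^{\lambda})\big)$. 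Since $\log r_{n+1}\sim \log r_n$, for $n$ large we have $\lambda \log r_n \le \log r_{n+1}$, i.e. $r_n^{\lambda} \ge r_{n+1}$; then as $\psi_\beta$ is non-decreasing, $\psi_\beta(r_n^{\lambda}) \le \psi_\beta(r_{n+1})$ is the wrong direction. The clean fix: pick $\lambda' \in (1,\lambda)$, and use that $\log r_{n+1} \sim \log r_n$ forces $r_{n+1} \ge r_n^{\lambda'}$ for $n$ large, hence $\psi_\beta(r_n^{\lambda'}) \le \psi_\beta(r_{n+1})$; combined with $\psi_\alpha(r_n) = o(\psi_\beta(r_n^{\lambda'}))$ from \eqref{def of scl} applied to the pair $\alpha > \beta$ with parameter $\lambda'$, we get $\psi_\alpha(r_n) = o\big(\psi_\beta(r_{n+1})\big)$. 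Therefore
\[ f(\epsilon)\,\psi_\alpha(\epsilon) \le f(r_{n+1})\,\psi_\alpha(r_n) = o\big(f(r_{n+1})\,\psi_\beta(r_{n+1})\big) \xrightarrow[\epsilon \to 0]{} 0, \]
since $n(\epsilon) \to \infty$ as $\epsilon \to 0$ and $f(r_{n+1})\psi_\beta(r_{n+1}) \to 0$. Hence $\alpha$ is admissible for the continuous problem, so $\alpha_0 \le \alpha$; letting $\alpha \downarrow \alpha_1$ yields $\alpha_0 \le \alpha_1$.

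\textbf{Main obstacle.} The one genuinely delicate point is the step just carried out: converting the hypothesis $\log r_{n+1} \sim \log r_n$ into a usable comparison $r_{n+1} \ge r_n^{\lambda'}$ (for any fixed $\lambda' > 1$, eventually in $n$) and then chaining it with the correct instance of the scaling axiom \eqref{def of scl} so that the loss from $\beta$ to $\alpha$ absorbs both the "interpolation between consecutive $r_n$" error and the monotonicity slack. One must be careful about the direction of each inequality (the $o(\cdot)$ must land on the side that is going to $0$), which is why I introduce the intermediate exponent $\beta$ with $\alpha > \beta > \alpha_1$ and an auxiliary $\lambda' \in (1,\lambda)$ rather than trying to push $\beta = \alpha_1$ directly. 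The second displayed equality of the lemma is handled symmetrically: there one chooses $\alpha_1 > \alpha > \beta$, uses $f(r_n)\psi_\beta(r_n) \to +\infty$, bounds $f(\epsilon)\psi_\beta(\epsilon) \ge f(r_n)\psi_\beta(r_n^{\lambda'})$ ... again arranging the scaling inequality so the blow-up is preserved, and concludes the reverse chain of inclusions. $\qed$
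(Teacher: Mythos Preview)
Your argument is correct and follows essentially the same route as the paper: sandwich $\epsilon$ between consecutive $r_{n+1}\le\epsilon\le r_n$, use the monotonicity of $f$ and $\psi_\alpha$ to reduce to a comparison of $\psi$-values at $r_n$ and $r_{n+1}$, then invoke the scaling axiom together with $r_n^{\lambda'}\le r_{n+1}$ (from $\log r_{n+1}\sim\log r_n$) to absorb the index shift at the cost of moving between neighboring exponents. The only cosmetic difference is that the paper packages both equalities at once via a two-sided chain $f(r_n)\psi_\gamma(r_n)\le f(\epsilon)\psi_\alpha(\epsilon)\le f(r_{n+1})\psi_\beta(r_{n+1})$ with $\beta<\alpha<\gamma$, whereas you treat them separately; also note that your introduction of a separate $\lambda$ before $\lambda'$ is unnecessary---a single $\lambda'>1$ close enough to $1$ suffices throughout.
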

\begin{proof}
Fix $\alpha > 0$ and $\epsilon > 0$. If $\epsilon$ is sufficiently small, there exists an integer \(n > 0\) such that \(r_{n+1} < \epsilon \le r_n\). Since \(f\) is non-increasing and \(\scl_\alpha\) is increasing, we have the inequalities:
\begin{equation} \label{ineq:first}
f(r_n) \cdot \scl_\alpha(r_{n+1}) \leq f(\epsilon) \cdot \scl_\alpha(\epsilon) \leq f(r_{n+1}) \cdot \scl_\alpha(r_n).
\end{equation}
Now, let \(\beta\) and \(\gamma\) be positive real numbers such that \(0 < \beta < \alpha < \gamma\). For \(\lambda\) sufficiently close to \(1\) and for sufficiently small \(\epsilon\), by  \cref{def of scl} it holds:
\begin{equation} \label{ineq:scaling}
\scl_\gamma(r_n) \leq \scl_\alpha(r_n^\lambda) \quad \text{and} \quad \scl_\alpha(r_n) \leq \scl_\beta(r_n^\lambda) \; .
\end{equation}
As $ \lambda \ge \frac{\log r_{n+1}}{\log r_n} $ for large $n$, it holds: 
\[
r_n^\lambda \leq r_{n+1} \; .
\]
This together with \cref{ineq:scaling} implies: 
\begin{equation} \label{ineq:scaledineqs}
\scl_\gamma(r_n) \leq \scl_\alpha(r_{n+1}) \quad \text{and} \quad \scl_\alpha(r_n) \leq \scl_\beta(r_{n+1}).
\end{equation}
By combining \cref{ineq:first,ineq:scaledineqs}, we obtain:
\[
f(r_n) \cdot \scl_\gamma(r_n) \leq f(\epsilon) \cdot \scl_\alpha(\epsilon) \quad \text{and} \quad f(\epsilon) \cdot \scl_\alpha(\epsilon) \leq f(r_{n+1}) \cdot \scl_\beta(r_{n+1}).
\]
Thus, it follows that:
\[
\limsup_{n \to +\infty} f(r_n) \cdot \scl_\gamma(r_n) \leq \limsup_{\epsilon \to 0} f(\epsilon) \cdot \scl_\alpha(\epsilon) \leq \limsup_{n \to +\infty} f(r_n) \cdot \scl_\beta(r_n),
\]
and similarly:
\[
\liminf_{n \to +\infty} f(r_n) \cdot \scl_\gamma(r_n) \leq \liminf_{\epsilon \to 0} f(\epsilon) \cdot \scl_\alpha(\epsilon) \leq \liminf_{n \to +\infty} f(r_n) \cdot \scl_\beta(r_n) \; .
\]
Since this holds for every positive \(\alpha\), and since \(\beta\) and \(\gamma\) can be taken arbitrarily close to \(\alpha\), we obtain the desired result.
\end{proof}
The following provides a whole class of scalings. It shows in particular that the families brought in \cref{example of growth} are indeed scalings.
\begin{proposition} \label{ex scl}
For any integers $ p ,  q \ge 1$, the family $  \Scl^{p,q}  = ( \scl^{p,q}_\alpha ) _{\alpha  > 0 }$ defined for any $ \alpha > 0$ by: 
\[ \scl^{p,q}_\alpha : \epsilon \in ( 0,1 ) \mapsto \frac{1}{\exp ^{\circ p  } ( \alpha  \cdot  \log_+^{ \circ q } ( \epsilon^{-1 } ) ) } \]
is a scaling; where $ \log_+ : t \in \R \mapsto \log(t) \cdot \1_{ t > 1  } $ is the positive part of the logarithm. 
\end{proposition}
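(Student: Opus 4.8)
The plan is to verify the two $o$-conditions of \cref{scaling} directly, after a change of variable that turns them into growth comparisons for iterated exponentials. Write $t=\epsilon^{-1}$, so $t\to+\infty$ as $\epsilon\to 0$, and set $L_q(t):=\log_+^{\circ q}(t)$, so that $\psi^{p,q}_\alpha(\epsilon)=1/\exp^{\circ p}(\alpha\,L_q(t))$. First I would record the elementary facts that each $L_q$ is non-negative, non-decreasing, and tends to $+\infty$; consequently each $\psi^{p,q}_\alpha$ is positive and non-decreasing on $(0,1)$, being the composition of the decreasing reparametrisation $\epsilon\mapsto\epsilon^{-1}$ with the non-decreasing maps $L_q$, $\cdot\,\alpha$, $\exp^{\circ p}$, and $x\mapsto 1/x$ composed an even number of times with decreasing ones. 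So the family is a family of positive non-decreasing functions, and it remains to check $(\ast)$.

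The analytic core is a growth lemma: for every $p\ge 1$ and every $c>1$, one has $\exp^{\circ p}(u)=o\!\left(\exp^{\circ p}(cu)\right)$ as $u\to+\infty$, and moreover $\left(\exp^{\circ p}(u)\right)^{\lambda}=o\!\left(\exp^{\circ p}(cu)\right)$ whenever $1\le\lambda<c$. The first assertion is proved by induction on $p$: the case $p=1$ is $e^{u}/e^{cu}=e^{(1-c)u}\to 0$, and for $p\ge 2$ one writes the quotient as $\exp\!\left(\exp^{\circ(p-1)}(u)-\exp^{\circ(p-1)}(cu)\right)$ and uses that, by the inductive hypothesis at level $p-1$ together with $\exp^{\circ(p-1)}(cu)\to+\infty$, the exponent tends to $-\infty$. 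The second assertion follows from the first: for $p=1$ it is $e^{(\lambda-c)u}\to 0$, and for $p\ge 2$ one has $\left(\exp^{\circ p}(u)\right)^{\lambda}=\exp\!\left(\lambda\exp^{\circ(p-1)}(u)\right)$, so the quotient with $\exp^{\circ p}(cu)$ equals $\exp\!\left(\lambda\exp^{\circ(p-1)}(u)-\exp^{\circ(p-1)}(cu)\right)$, whose exponent tends to $-\infty$ since $\lambda\exp^{\circ(p-1)}(u)=o\!\left(\exp^{\circ(p-1)}(cu)\right)$ by the first assertion.

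Next I would establish the iterated-logarithm comparison: for $\lambda>1$ and $t$ large, $L_q(t^{\lambda})\le\mu_q\,L_q(t)$, where $\mu_1=\lambda$ and, for $q\ge 2$, $\mu_q$ may be taken arbitrarily close to $1$. Indeed $L_1(t^{\lambda})=\lambda\log t=\lambda L_1(t)$; for $t$ large $L_2(t^{\lambda})=\log\lambda+L_2(t)$; and an easy induction shows $L_q(t^{\lambda})-L_q(t)$ stays bounded for every $q\ge 2$ (for $q\ge 3$, it equals $\log\!\left(L_{q-1}(t^{\lambda})/L_{q-1}(t)\right)\to 0$ using the inductive bound and $L_{q-1}(t)\to+\infty$). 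Since $L_q(t)\to+\infty$, the ratio $L_q(t^{\lambda})/L_q(t)\to 1$, giving the stated $\mu_q$.

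Finally I would combine. Fix $\alpha>\beta>0$ and pick $\lambda>1$ close enough to $1$ that $\lambda<\alpha/\beta$ and (for $q=1$) $\beta\mu_q=\beta\lambda<\alpha$, which for $q\ge 2$ is automatic once $\mu_q$ is chosen close enough to $1$. For the left-hand condition of $(\ast)$: $\beta L_q(t^{\lambda})\le\beta\mu_q L_q(t)=:\alpha' L_q(t)$ with $\beta<\alpha'<\alpha$, so monotonicity of $\exp^{\circ p}$ and the growth lemma give $\exp^{\circ p}\!\left(\beta L_q(t^{\lambda})\right)\le\exp^{\circ p}\!\left(\alpha' L_q(t)\right)=o\!\left(\exp^{\circ p}(\alpha L_q(t))\right)$, i.e. $\psi^{p,q}_\alpha(\epsilon)/\psi^{p,q}_\beta(\epsilon^{\lambda})\to 0$. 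For the right-hand condition: with $v=\beta L_q(t)\to+\infty$ and $c=\alpha/\beta>\lambda$, the multiplicative form of the growth lemma gives $\left(\exp^{\circ p}(\beta L_q(t))\right)^{\lambda}=o\!\left(\exp^{\circ p}(\alpha L_q(t))\right)$, i.e. $\psi^{p,q}_\alpha(\epsilon)/\psi^{p,q}_\beta(\epsilon)^{\lambda}\to 0$. This is exactly \eqref{def of scl}, so $\Psi^{p,q}$ is a scaling; the three families of \cref{example of growth} are the special cases $\Psi^{1,1}$, $\Psi^{2,1}$ and $\Psi^{2,2}$. The main obstacle is the bookkeeping in the growth lemma — keeping straight that the constant-free statement feeds the inductive step of the $\lambda$-power statement — and the observation that the threshold on $\lambda$ is allowed to depend on $\alpha,\beta$, which is genuinely needed in the $q=1$ branch where $L_q$ is not sub-linear under $t\mapsto t^{\lambda}$.
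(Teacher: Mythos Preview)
Your proof is correct and follows essentially the same route as the paper: both arguments reduce to the sub-homogeneity of iterated logarithms, $\log^{\circ d}(y^\nu)\le\nu\,\log^{\circ d}(y)$ for large $y$ (your iterated-logarithm comparison is this with $d=q$, and your growth lemma is its exponential dual with $d=p$). The only organisational difference is that the paper first packages this into the pointwise inequalities $\psi^{p,q}_{\nu\gamma}(\epsilon)\le\psi^{p,q}_{\gamma}(\epsilon^{\nu})$ and $\psi^{p,q}_{\nu\gamma}(\epsilon)\le\psi^{p,q}_{\gamma}(\epsilon)^{\nu}$ and then squeezes with $\alpha>\lambda^{2}\beta$ to extract the $o$-conditions, whereas you verify the two $o$-conditions of $(\ast)$ directly via your growth lemma; this is a mild streamlining rather than a different idea.
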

We prove this proposition below. Note in particular that:  \[ {\Scl^{1,1} =  \dim = ( \epsilon \in ( 0,1) \mapsto \epsilon^{ \alpha } ) _{\alpha > 0 }} \qand \Scl^{2,1} =  \ord = ( \epsilon \in ( 0, 1 )  \mapsto \exp ( - \epsilon^{ -\alpha} ) ) _{ \alpha > 0 }\;\] are both scalings. 
Let us give an example of a space that has finite box scales for the scaling $ \Scl^{2,2} $ as defined in \cref{ex scl}.  Consider the space  $A$ of holomorphic functions on the disk $ \mathbb D (R) \subset \mathbb{C} $ of radius $ R > 1$ which are bounded by $1$:
\[ A = \left\{ \phi = \sum_{n \ge 0} a_n z^n  \in C^\omega ( \mathbb D ( R), \C ) : \sup_{\mathbb D (R) } \vert \phi \vert \le 1 \right\}  \text{endowed with the norm} \| \phi \|_\infty  := \sup_{ z \in \D (1) } \vert \phi (z) \vert \; . \]
Kolmogorov and Tikhomirov gave the following estimate of its covering number: 
\begin{theorem}[Kolmogorov, Tikhomirov {\cite{tikhomirov1993varepsilon}[Equality (129)]} \label{holo}]
The following estimate on the covering number of  ${(A, \| \cdot \| _ \infty )}  $ holds: 
\[ \log   \mathcal{N}_\epsilon (A) = ( \log R )^{-1}\cdot  \vert  \log \epsilon  \vert ^2  + O ( \log \epsilon^{-1}  \cdot \log \log \epsilon^{-1}  ) , \ \text{when $ \epsilon $ tends to $0$. }  \;.    \]
\end{theorem}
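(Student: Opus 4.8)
The plan is to establish the asymptotics by two matching estimates, an upper bound on $\mathcal N_\epsilon(A)$ (covering) and a lower bound (packing), both reduced to a finite-dimensional quantization of Taylor coefficients. The starting point is the Cauchy estimate: every $\phi=\sum_{n\ge 0}a_n z^n\in A$ satisfies $|a_n|\le R^{-n}$, since $a_n=\frac{1}{2\pi i}\oint_{|z|=R}\phi(z)z^{-n-1}dz$ and $|\phi|\le 1$ on $\mathbb D(R)$. On $\mathbb D(1)$ the correspondence $\phi\mapsto(a_n)_n$ is controlled both ways: by Cauchy on $|z|=1$ one has $\|\phi\|_\infty\ge\max_n|a_n|$, while trivially $\|\phi\|_\infty\le\sum_n|a_n|$. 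The scale is set by the truncation level $N\asymp \log(1/\epsilon)/\log R$: for $|z|\le 1$ the tail $\sum_{n>N}a_n z^n$ has sup-norm at most $\sum_{n>N}R^{-n}=O(R^{-N})$, which is $\le\epsilon$ once $N\ge \log(1/\epsilon)/\log R$. Thus, up to an $\epsilon$-error, only the coefficients $a_0,\dots,a_N$ matter, each living in the complex disk of radius $R^{-n}$.

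For the upper bound I would quantize the head on a uniform complex grid of spacing $\delta=\epsilon/(2(N+1))$ in each coordinate. If $|a_n-\tilde a_n|\le\delta$ for all $n\le N$, then $\|\sum_{n\le N}(a_n-\tilde a_n)z^n\|_\infty\le\sum_{n\le N}\delta\le\epsilon/2$, so together with the tail estimate the quantized polynomials form an $\epsilon$-net. Covering the disk $\{|a_n|\le R^{-n}\}$ by $\delta$-balls costs $\asymp\max(1,(R^{-n}/\delta)^2)$ points, whence
\[\log\mathcal N_\epsilon(A)\le \sum_{n=0}^{N}\log\max\!\big(1,(R^{-n}/\delta)^2\big)+O(1)=2\!\!\sum_{0\le n<M}\!\!\big(\log\tfrac1\delta-n\log R\big)+O(\log\tfrac1\delta),\]
where $M=\log(1/\delta)/\log R$ is the index at which $R^{-n}$ crosses $\delta$. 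Evaluating this arithmetic sum gives the leading term $(\log R)^{-1}(\log\tfrac1\delta)^2$. Since $N\asymp\log(1/\epsilon)/\log R$ forces $\log\tfrac1\delta=\log\tfrac1\epsilon+\log(2(N+1))=|\log\epsilon|+O(\log\log\tfrac1\epsilon)$, squaring yields $(\log\tfrac1\delta)^2=|\log\epsilon|^2+O(|\log\epsilon|\log\log\tfrac1\epsilon)$, which is exactly the claimed upper estimate.

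For the lower bound I would build a $2\epsilon$-separated family and invoke $\mathcal N_\epsilon\ge\mathcal P_{2\epsilon}$. Take polynomials $f=\sum_{n\le N}a_n z^n$ whose coefficients range over a complex grid of spacing $2\epsilon$ inside the disk $\{|a_n|\le c_n\}$, with $c_n=\rho_n R^{-n}$ and weights $\rho_n\propto (n+1)^{-2}$ chosen so that $\sum_n\rho_n\le 1$; then $\sup_{\mathbb D(R)}|f|\le\sum_{n\le N}c_n R^n\le 1$, so every such $f$ lies in $A$. Two distinct grid points $f,g$ (with coefficients $a_n,a'_n$) differ by at least $2\epsilon$ in some coordinate, and the Cauchy estimate on $|z|=1$ forces $\|f-g\|_\infty\ge 2\epsilon$; hence the family is $2\epsilon$-separated. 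Its cardinality is $\prod_{n\le N}\max(1,(c_n/2\epsilon)^2)$, and taking logs produces the same arithmetic sum as above, with the same leading term $(\log R)^{-1}|\log\epsilon|^2$.

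The \textbf{main obstacle}, and the only place real care is needed, is showing that the two unavoidable inefficiencies contribute only to the remainder and not to the leading coefficient. On the covering side the spacing carries the factor $1/(N+1)$, and on the packing side the admissibility weights $\rho_n\propto(n+1)^{-2}$ shrink each radius by a polynomial factor; both shift the effective cutoff $M$ by $O(\log\log\tfrac1\epsilon)$ and perturb each summand by $O(\log n)$. Summing $O(\log n)$ over $n\le N\asymp|\log\epsilon|$ gives $O(|\log\epsilon|\log\log\tfrac1\epsilon)$, exactly the stated error, while the quadratic leading term $(\log R)^{-1}|\log\epsilon|^2$ is identical for both bounds. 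A careful evaluation of the terms near the cutoff $n\approx M$ (where $R^{-n}\approx\delta$) confirms that they total $O(|\log\epsilon|)$ and do not disturb the leading asymptotics, so the upper and lower estimates coincide and yield the theorem.
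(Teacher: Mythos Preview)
The paper does not prove this theorem: it is quoted as a result of Kolmogorov--Tikhomirov \cite{tikhomirov1993varepsilon} and used only as an illustration that the scaling $\Psi^{2,2}$ is natural for spaces of holomorphic functions. So there is no ``paper's own proof'' to compare against.

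Your argument is the classical one and is essentially correct. The reduction via Cauchy estimates $|a_n|\le R^{-n}$, truncation at $N\asymp |\log\epsilon|/\log R$, and coordinate-wise quantization of the head is exactly how Kolmogorov and Tikhomirov proceed. The evaluation of the arithmetic sum and the tracking of the $O(|\log\epsilon|\log\log\epsilon^{-1})$ error from the factor $1/(N+1)$ (covering side) and the weights $\rho_n$ (packing side) are handled correctly. One small remark: for the lower bound you invoke ``$\mathcal N_\epsilon\ge\mathcal P_{2\epsilon}$''; in the paper's notation this is \cref{cov pack}, $\tilde{\mathcal N}_{2\epsilon}\le\mathcal N_\epsilon$, and your $2\epsilon$-separated family indeed witnesses $\tilde{\mathcal N}_{2\epsilon}$, so the inequality goes the right way.
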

In the framework of scales, the above translates as: 
\[ \underline \Scl^{2,2}_B A = \overline \Scl^{2,2}_B A = 2 \; .  \]
Let us now give a proof of  \cref{ex scl}.
\begin{proof}[Proof of \cref{ex scl}]
The proof is based on the following two facts:
 \begin{fact} \label{fact log}
 For every $\nu > 1 $, for every $ d \ge 1 $ and  for $ y > 0 $ large enough, it holds: 
 \[ \log^{\circ d } ( y^ \nu ) \le \nu \cdot  \log^{\circ d } ( y)  \; . \]
 \end{fact}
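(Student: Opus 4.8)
\textbf{Proof plan for \cref{fact log}.}

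The plan is to prove the claim by induction on $d \ge 1$, with a clean base case handled first and the inductive step reduced to the base case composed with the induction hypothesis. For the base case $d = 1$, I would observe that $\log(y^\nu) = \nu \cdot \log(y)$ exactly, so the inequality holds with equality for all $y > 1$; in particular there is nothing to prove when $d=1$ beyond noting $\log_+$ coincides with $\log$ for large arguments.

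For the inductive step, suppose the inequality $\log^{\circ d}(y^\nu) \le \nu \cdot \log^{\circ d}(y)$ holds for all sufficiently large $y$. I want to bound $\log^{\circ (d+1)}(y^\nu) = \log\left(\log^{\circ d}(y^\nu)\right)$. First apply the induction hypothesis inside the outer logarithm: since $\log$ is increasing and $\log^{\circ d}(y^\nu) \le \nu \cdot \log^{\circ d}(y)$ for $y$ large, we get
\[ \log^{\circ (d+1)}(y^\nu) \le \log\left( \nu \cdot \log^{\circ d}(y) \right) = \log \nu + \log^{\circ(d+1)}(y) \; . \]
It then remains to absorb the additive constant $\log \nu$: since $\nu > 1$, for $y$ large enough $\log^{\circ(d+1)}(y) > 0$ and moreover $\log^{\circ(d+1)}(y) \to +\infty$, so eventually $\log \nu \le (\nu - 1) \cdot \log^{\circ(d+1)}(y)$, which yields $\log \nu + \log^{\circ(d+1)}(y) \le \nu \cdot \log^{\circ(d+1)}(y)$, as desired. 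This closes the induction.

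The only mild subtlety — and the one point deserving care rather than any real difficulty — is the bookkeeping of the phrase ``for $y$ large enough'': at each stage of the induction the threshold beyond which the estimate is valid may increase, but since $d$ is fixed and the induction has finitely many steps, a single finite threshold suffices, and the composed iterated logarithms are all well-defined and tending to $+\infty$ there. No step presents a genuine obstacle; the argument is entirely elementary, using only monotonicity of $\log$ and that $\log^{\circ(d+1)}(y) \to +\infty$.
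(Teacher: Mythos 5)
Your proof is correct and is precisely the recursion on $d$ that the paper merely asserts without writing out (the paper only says the fact ``can be proved recursively on $d \ge 1$''). The base case with exact equality, the monotonicity step, and the absorption of the additive constant $\log \nu$ using $\log^{\circ(d+1)}(y) \to +\infty$ are all sound, and your remark about the finitely many threshold increases handles the only delicate point.
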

\begin{fact}  \label{f1 ex}
For any $ \gamma > 0$ and $ \nu > 1 $ close to $1$, it holds for $ \epsilon > 0 $ small:   
\[  \scl^{p,q} _{ \nu \cdot \gamma  } (\epsilon )  \le  \scl^{p,q}_\gamma  ( \epsilon^\nu)    \qand  \scl^{p,q} _{ \nu \cdot \gamma  } (\epsilon )  \le   \scl^{p,q}_ {\gamma }( \epsilon )    ^\nu  \; .   \]
\end{fact}
Actually \cref{f1 ex} will be proved using \cref{fact log}. First let us show recursively: 

\begin{proof}[Proof of \cref{fact log}] 
We prove this fact by induction on $d \ge 1$. For $d = 1$ note that the inequality is obvious as the equality holds. Then we conclude by induction on $d$  based on the following:
 \[ \log^{\circ (d+1) } ( y^ \nu )  = \log ( \log^{\circ d } ( y^\nu) )  \le \log ( \nu \log^{\circ d} y )  = \log \nu + \log^{\circ(d+1)} y \;  , \]
where the inequality is given by  the induction hypothesis.
\end{proof}
We are now ready to show: 
\begin{proof}[Proof of \cref{f1 ex}]
We apply \cref{fact log} with $d = q $ and $ y = \epsilon^{-1} $ for sufficiently small values of $ \epsilon$ to obtain for every $ \nu > 1$: 
\begin{equation*} \label{app fact log} 
\log^{ \circ q} ( \epsilon^{-\nu} )  \le \nu \cdot \log^{\circ q} ( \epsilon^{-1} )   \; .
\end{equation*}  
Multiplying by $ \gamma $ and composing by $ t \mapsto 
1 \slash \exp^{\circ p } (t)  $ which is decreasing, yields the first inequality in \cref{f1 ex}. 
 
To show the second inequality, we apply again \cref{fact log} with $d = p$ and  $ y =  \frac{1}{\scl_\gamma^{p,q} ( \epsilon)} $ which is large for small values of $ \epsilon >0$ to obtain: 
\[ \log^{\circ p } ( y^\nu )  \le \nu \cdot   \log^{\circ p } ( y  ) \; . \]
As $ y = \exp^{\circ p} ( \gamma  \cdot \log^{\circ q} ( \epsilon^{-1} ) )$, the above inequality translates as $ \log^{\circ p } ( y^\nu) \le \nu \cdot \gamma \cdot \log^{ \circ q} ( \epsilon^{-1} ) $. It follows that:
\[  y^\nu \le \exp^{\circ p } ( \nu \cdot \gamma \cdot \log^{ \circ q} ( \epsilon^{-1} ) ) =   \frac{1}{ \scl_{ \nu \cdot \gamma} ^{p,q}     ( \epsilon) } \; . \] 
In other words, it holds $\scl_{\nu \cdot \gamma}^{p,q} ( \epsilon)  \le  y^{-\nu} $ which is exactly the second inequality of \cref{f1 ex}. 
\end{proof}
We are now ready to prove the two estimates of \cref{def scl} in the definition of scaling for the family $ \Scl^{p,q}$. 
Let us fix $ \alpha > \beta > 0 $ and $ \lambda >1 $ such that $ \alpha > \lambda^2 \cdot \beta$. As $ \scl_\alpha^{p,q}$ is  decreasing with $ \alpha$, it holds: 
\begin{equation} \label{pq} 
\scl_{ \alpha} ^{p,q} ( \epsilon )   \le \scl_{\lambda^2  \cdot \beta } ^{p,q}  ( \epsilon ) \; . 
\end{equation}
On the other hand, by \cref{f1 ex} we have: 
\begin{equation} \label{pq2} 
\scl^{p,q} _{\lambda^2  \cdot \beta }  ( \epsilon )  \le   \scl^{p,q} _{\lambda  \cdot \beta }  ( \epsilon ^\lambda )  \le  \left(  \scl^{p,q} _{ \beta }  ( \epsilon ^\lambda) \right)   ^\lambda \qand  \scl^{p,q} _{\lambda^2  \cdot \beta }  ( \epsilon )  \le \left(  \scl^{p,q} _{ \beta }  ( \epsilon ) \right)  ^{\lambda^2}  \;.
\end{equation}
The above \cref{pq,pq2} imply: 
\[
\frac{\scl^{p,q}_{\alpha}(\epsilon)}{\scl^{p,q}_{\beta}(\epsilon^\lambda)} \leq \left( \scl^{p,q}_{\beta}(\epsilon^\lambda) \right)^{\lambda - 1}  \quad \text{and} \quad
\frac{\scl^{p,q}_{\alpha}(\epsilon)}{\scl^{p,q}_{\beta}(\epsilon)^\lambda} \leq \left( \scl^{p,q}_{\beta}(\epsilon) \right)^{\lambda \cdot (\lambda - 1)}  
 \; ,  \]
which both converge to $ 0 $  as $ \epsilon$ goes to $ 0 $, providing the desired result. 
\end{proof} 
 \subsection{Box scales}  
As introduced in \cref{def box}, \emph{lower and upper  box scales} of a metric space $ (X,d) $ are defined by: 
\[\underline \Scl_B X=  \sup \left\{ \alpha > 0 :  \mathcal{N}_\epsilon (X) \cdot \scl_\alpha ( \epsilon )\xrightarrow[\epsilon \rightarrow 0]{}   + \infty  \right\} \] and 
\[\overline {\Scl}_B   X =   \inf \left\{ \alpha > 0 : \mathcal{N}_\epsilon (X)  \cdot  \scl_\alpha ( \epsilon ) \xrightarrow[\epsilon \rightarrow 0]{}   0  \right\}  \;, \] 
where the covering number $ \mathcal{N}_{\epsilon} (X)$ is the minimal cardinality of a covering of $X$ by balls with radius $ \epsilon > 0 $. 

In general, the upper and lower box scales must not coincide. We will give such examples for the order in \cref{ex inho}. Now we list a few properties of box scales that are well known in the specific case of dimension. 
\begin{fact} \label{fact basic pties}
Let $ (X,d)$ be a metric space. The following properties hold true: 
\begin{enumerate}
\item  if $\underline{\Scl}_B ( X) <  + \infty $, then $(X,d)$ is totally bounded, 
\item for every subset $ E \subset X $ it holds $ \underline{\Scl}_B E \le \underline {\Scl}_B X $ and $ \overline{\Scl}_B E \le \overline {\Scl}_B X $, 
\item for every subset $ E \subset X$ it holds $ \underline{\Scl}_B E  =  \underline {\Scl}_B \cl( E )   $ and $ \overline{\Scl}_B E  =  \overline {\Scl}_B \cl ( E )   $. 
\end{enumerate}
\end{fact}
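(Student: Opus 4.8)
The final statement to prove is the \textbf{Fact} about box scales, with three parts. The plan is to mimic the well-known arguments for box dimension, now phrased in terms of the covering number $\mathcal{N}_\epsilon$ and a general scaling $\Psi$.

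For part (1), I would argue by contraposition: if $(X,d)$ is not totally bounded, then there exists $\epsilon_0 > 0$ such that no finite $\epsilon_0$-cover exists, hence $\mathcal{N}_{\epsilon_0}(X) = +\infty$ and consequently $\mathcal{N}_\epsilon(X) = +\infty$ for all $0 < \epsilon \le \epsilon_0$ (covering numbers are non-increasing in $\epsilon$). Then for \emph{every} $\alpha > 0$ the product $\mathcal{N}_\epsilon(X)\cdot \psi_\alpha(\epsilon)$ is identically $+\infty$ near $0$, hence tends to $+\infty$; so the defining set for $\underline\Psi_B X$ is all of $(0,+\infty)$ and $\underline\Psi_B X = +\infty$. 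For part (2), the key observation is monotonicity of the covering number under inclusion: if $E \subset X$, any $\epsilon$-cover of $X$ by balls of $X$ restricts to an $\epsilon$-cover of $E$ (intersecting the balls with $E$, or recentering — one has to be slightly careful whether balls must be centered in $E$; using a factor-$2$ argument, $\mathcal{N}_\epsilon(E) \le \mathcal{N}_{\epsilon/2}^{X}(E)$ type estimates and then invoke \cref{contdiscr} or \cref{lem inegalite} to absorb the constant). More simply, every $\epsilon$-ball of $X$ that meets $E$ is contained in a $2\epsilon$-ball centered in $E$, giving $\mathcal{N}_{2\epsilon}(E) \le \mathcal{N}_\epsilon(X)$; then \cref{lem inegalite} (with $f = \mathcal{N}_\bullet(E)$ precomposed with scaling by $2$ and $g = \mathcal{N}_\bullet(X)$) yields both $\underline\Psi_B E \le \underline\Psi_B X$ and $\overline\Psi_B E \le \overline\Psi_B X$. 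For part (3), since $E \subset \cl(E)$, part (2) gives the inequalities $\le$; for the reverse, I would show $\mathcal{N}_\epsilon(\cl E) \le \mathcal{N}_{\epsilon/2}(E)$ (or $\mathcal{N}_{2\epsilon}$, depending on convention): any point of $\cl(E)$ is within $\epsilon/2$ of some point of $E$, which in turn lies in some ball of a finite $\epsilon/2$-cover of $E$, so the concentric $\epsilon$-balls cover $\cl(E)$. Again \cref{lem inegalite} absorbs the multiplicative constant, giving equality.

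The only genuinely delicate point is the bookkeeping of multiplicative constants on the radius (the factors of $2$), and this is exactly what \cref{ineg cons} and \cref{lem inegalite} were set up to handle: for a scaling, replacing $\epsilon$ by $C\epsilon$ does not change the value of the box scale. So each step reduces to a covering-number inequality of the form $\mathcal{N}_{C\epsilon}(Y_1) \le \mathcal{N}_\epsilon(Y_2)$ followed by an application of \cref{lem inegalite}. I would write the proof in three short paragraphs, one per item, each consisting of the elementary covering-number comparison plus the citation to \cref{lem inegalite}.

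\begin{proof}
(1) Suppose $(X,d)$ is not totally bounded. Then there is $\epsilon_0>0$ admitting no finite covering by balls of radius $\epsilon_0$, so $\mathcal{N}_{\epsilon_0}(X)=+\infty$, and since $\epsilon\mapsto \mathcal{N}_\epsilon(X)$ is non-increasing, $\mathcal{N}_\epsilon(X)=+\infty$ for all $\epsilon\in(0,\epsilon_0]$. Hence for every $\alpha>0$ the product $\mathcal{N}_\epsilon(X)\cdot\psi_\alpha(\epsilon)$ equals $+\infty$ for all small $\epsilon$, so it tends to $+\infty$; therefore $\underline\Psi_B X=\sup(0,+\infty)=+\infty$. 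By contraposition, $\underline\Psi_B X<+\infty$ implies $(X,d)$ totally bounded.

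(2) Let $E\subset X$. Any ball $B(x,\epsilon)$ of $X$ meeting $E$ is contained in $B(y,2\epsilon)$ for any chosen $y\in B(x,\epsilon)\cap E$. Thus from a minimal $\epsilon$-covering of $X$ we extract an $\epsilon$-covering of $E$ by $2\epsilon$-balls centered in $E$, giving $\mathcal{N}_{2\epsilon}(E)\le\mathcal{N}_\epsilon(X)$. Applying \cref{lem inegalite} with $f=\mathcal{N}_\bullet(E)$, $g=\mathcal{N}_\bullet(X)$ and $C=2$ yields $\underline\Psi_B E\le\underline\Psi_B X$ and $\overline\Psi_B E\le\overline\Psi_B X$.

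(3) Since $E\subset\cl(E)$, item (2) gives $\underline\Psi_B E\le\underline\Psi_B\cl(E)$ and $\overline\Psi_B E\le\overline\Psi_B\cl(E)$. Conversely, given a minimal covering of $E$ by balls $B(c_i,\epsilon)$, every point of $\cl(E)$ lies within $\epsilon$ of some point of $E$, hence within $2\epsilon$ of some $c_i$; so the balls $B(c_i,2\epsilon)$ cover $\cl(E)$ and $\mathcal{N}_{2\epsilon}(\cl(E))\le\mathcal{N}_\epsilon(E)$. Applying \cref{lem inegalite} with $f=\mathcal{N}_\bullet(\cl(E))$, $g=\mathcal{N}_\bullet(E)$ and $C=2$ gives $\underline\Psi_B\cl(E)\le\underline\Psi_B E$ and $\overline\Psi_B\cl(E)\le\overline\Psi_B E$, whence the claimed equalities.
\end{proof}
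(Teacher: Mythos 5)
Your proof is correct and follows essentially the same route as the paper, which simply declares items (1) and (2) straightforward and handles item (3) via the covering-number comparison $\mathcal{N}_{\epsilon}(E)\le\mathcal{N}_\epsilon(\cl(E))\le\mathcal{N}_{\epsilon/2}(E)$ — the reparametrized form of your inequality $\mathcal{N}_{2\epsilon}(\cl(E))\le\mathcal{N}_\epsilon(E)$. Your use of \cref{lem inegalite} to absorb the factor of $2$ (and your care about whether balls are centered in the subset) fills in exactly the details the paper leaves implicit.
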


Observe that $1.$ and $2.$ are direct consequences of the definitions. 

To see $ 3.$ it is enough to observe that $  \mathcal{N}_{  \epsilon}  ( E) \le   \mathcal{N}_\epsilon ( \cl ( E )  )  \le   \mathcal{N}_{ \epsilon / 2 } ( E) $ for every $ \epsilon >  0 $. 

As for box dimensions, box scales can also be defined by replacing the covering number by packing number: 
 \begin{definition}[Packing number]\label{def pack number}
For $ \epsilon >0$ let  $\tilde {\mathcal{N}}  _\epsilon (X) $ be the \emph{packing} number of the metric space $ (X,d)$. It is the maximum cardinality of  an $ \epsilon$-separated set of points in $X$ for the distance $d$: 
\[ \tilde {\mathcal{N}}_{\epsilon} ( X) = \sup \{ N \ge 0 : \exists x_1 , \dots x_N \in X ,   d (x_i , x_j) \ge \epsilon  \text{ for every  $ 1 \le i < j \le N $ } \} \; .  \]
\end{definition}
A well known comparison between packing and covering numbers is the following: 
\begin{lemma} \label{cov pack}
Let $(X,d) $ be a metric space. For every $ \epsilon > 0$, it holds: 
\[ \tilde {\mathcal{N}}  _{2\epsilon}  (X)  \le \mathcal{N}  _\epsilon (X)  \le \tilde {\mathcal{N}}  _\epsilon (X)  \; .  \]
\end{lemma}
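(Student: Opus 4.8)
\textbf{Proof plan for \cref{cov pack}.}

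The plan is to prove the two inequalities separately, each by a standard extremal/maximality argument on $\epsilon$-separated sets.

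\emph{Right inequality $\mathcal N_\epsilon(X) \le \tilde{\mathcal N}_\epsilon(X)$.} First I would take a maximal $\epsilon$-separated set $\{x_1, \dots, x_N\}$ in $X$, which exists (in the sense of a supremum) by \cref{def pack number}; if $X$ has an infinite $\epsilon$-separated set both sides are infinite and there is nothing to prove, so I may assume $N = \tilde{\mathcal N}_\epsilon(X) < \infty$. By maximality, every point $y \in X$ lies within distance $\epsilon$ of some $x_i$ (otherwise $\{x_1, \dots, x_N, y\}$ would be $\epsilon$-separated, contradicting maximality). Hence the balls $B(x_i, \epsilon)$, $1 \le i \le N$, cover $X$, which gives a covering by $N$ balls of radius $\epsilon$, so $\mathcal N_\epsilon(X) \le N = \tilde{\mathcal N}_\epsilon(X)$.

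\emph{Left inequality $\tilde{\mathcal N}_{2\epsilon}(X) \le \mathcal N_\epsilon(X)$.} Here I would start from a covering of $X$ by balls $B(c_1, \epsilon), \dots, B(c_M, \epsilon)$ with $M = \mathcal N_\epsilon(X)$ (again if no finite covering exists the right side is infinite), and from any $2\epsilon$-separated set $\{y_1, \dots, y_K\}$. The key observation is that no ball $B(c_j, \epsilon)$ can contain two distinct points $y_a, y_b$: if it did, the triangle inequality would give $d(y_a, y_b) \le d(y_a, c_j) + d(c_j, y_b) < \epsilon + \epsilon = 2\epsilon$, contradicting $2\epsilon$-separation. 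Thus the map sending each $y_a$ to an index $j$ with $y_a \in B(c_j, \epsilon)$ is injective, so $K \le M$. Taking the supremum over $2\epsilon$-separated sets yields $\tilde{\mathcal N}_{2\epsilon}(X) \le \mathcal N_\epsilon(X)$.

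I do not anticipate a genuine obstacle here: the only mild point of care is the bookkeeping around infinite cardinalities (when $X$ is not totally bounded), which is handled by the trivial observation that if either covering or packing number is infinite the relevant inequality holds vacuously; once we are in the totally bounded regime everything is the classical packing-covering comparison and the argument above is complete.
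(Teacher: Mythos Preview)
Your proof is correct and is exactly the standard packing--covering comparison argument. The paper does not actually prove \cref{cov pack}: it is introduced as ``a well know comparison'' and stated without proof, so there is nothing to compare your approach against; your write-up would serve perfectly well as the omitted proof.
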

In virtue of the basic properties of scalings, the covering number can be replaced by the packing number in the definitions of box scales: 
\begin{lemma}\label{packing numb}
Let $(X,d) $ be a metric space and $ \Scl $ a scaling, then box scales of $X$ can be written as: 
\[\underline {\Scl}_B   X =   \sup \left\{ \alpha > 0 :  \tilde {\mathcal{N}}_\epsilon (X)  \cdot  \scl_\alpha ( \epsilon ) \xrightarrow[\epsilon \rightarrow 0]{}    + \infty \right\}   \] and \[  \overline {\Scl}_B   X =   \inf \left\{ \alpha > 0 : \tilde {\mathcal{N}}_\epsilon (X)  \cdot  \scl_\alpha ( \epsilon ) \xrightarrow[\epsilon \rightarrow 0]{}  0  \right\}  \;  .\] 
\end{lemma}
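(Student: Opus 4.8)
The plan is to use the sandwich inequality from \cref{cov pack}, namely $\tilde{\mathcal N}_{2\epsilon}(X)\le \mathcal N_\epsilon(X)\le \tilde{\mathcal N}_\epsilon(X)$, to compare the three defining limits. Fix a scaling $\Psi$ and an exponent $\alpha>0$. The right-hand inequality $\mathcal N_\epsilon(X)\le \tilde{\mathcal N}_\epsilon(X)$ is immediate to exploit: if $\tilde{\mathcal N}_\epsilon(X)\cdot\psi_\alpha(\epsilon)\to 0$ then a fortiori $\mathcal N_\epsilon(X)\cdot\psi_\alpha(\epsilon)\to 0$, and dually, if $\mathcal N_\epsilon(X)\cdot\psi_\alpha(\epsilon)\to+\infty$ then $\tilde{\mathcal N}_\epsilon(X)\cdot\psi_\alpha(\epsilon)\to+\infty$. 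This gives one of the two inequalities between each pair of suprema/infima for free, with no use of the scaling axiom at all.

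The content is in the other direction, where the factor of $2$ in $\tilde{\mathcal N}_{2\epsilon}(X)\le\mathcal N_\epsilon(X)$ must be absorbed. Here I would invoke \cref{lem inegalite} (or equivalently \cref{ineg cons} directly): applying it with $f(\epsilon)=\tilde{\mathcal N}_{\epsilon}(X)$ and $g(\epsilon)=\mathcal N_\epsilon(X)$ and constant $C=2$, using $f(2\epsilon)\le g(\epsilon)$ near $0$, yields
\[
\inf\{\alpha>0: \tilde{\mathcal N}_\epsilon(X)\cdot\psi_\alpha(\epsilon)\to 0\}\le \inf\{\alpha>0:\mathcal N_\epsilon(X)\cdot\psi_\alpha(\epsilon)\to 0\}
\]
and the analogous inequality for the suprema with limit $+\infty$. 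Concretely, the mechanism of \cref{ineg cons} is that for $\alpha>\beta>0$ and $\epsilon$ small one has $\psi_\alpha(\epsilon)\le\psi_\beta(2^{-1}\epsilon)$, so $\tilde{\mathcal N}_\epsilon(X)\cdot\psi_\alpha(\epsilon)\le \tilde{\mathcal N}_\epsilon(X)\cdot\psi_\beta(\epsilon/2)=\tilde{\mathcal N}_{2\tilde\epsilon}(X)\cdot\psi_\beta(\tilde\epsilon)\le \mathcal N_{\tilde\epsilon}(X)\cdot\psi_\beta(\tilde\epsilon)$ with $\tilde\epsilon=\epsilon/2$; letting $\beta\uparrow\alpha$ converts convergence statements for $\mathcal N$ into convergence statements for $\tilde{\mathcal N}$ with an arbitrarily small loss in the exponent, which is exactly what is needed since the defining sets are suprema/infima over $\alpha$.

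Combining the two directions gives equality of the relevant suprema and infima, hence
\[
\underline\Psi_B X=\sup\{\alpha>0:\tilde{\mathcal N}_\epsilon(X)\cdot\psi_\alpha(\epsilon)\to+\infty\}\qand \overline\Psi_B X=\inf\{\alpha>0:\tilde{\mathcal N}_\epsilon(X)\cdot\psi_\alpha(\epsilon)\to 0\},
\]
as claimed. The main (and only real) obstacle is the bookkeeping of the multiplicative constant $2$ in the packing–covering comparison: without the scaling axiom a constant factor inside the argument of $\psi_\alpha$ could in principle shift the critical exponent, and it is precisely \cref{ineg cons}/\cref{lem inegalite} that rules this out. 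Everything else is a routine monotonicity argument, and the sequential characterization \cref{contdiscr} is not even needed here.
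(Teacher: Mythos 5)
Your proof is correct and follows essentially the same route as the paper: the paper's own proof consists precisely of citing the sandwich $\tilde{\mathcal N}_{2\epsilon}(X)\le\mathcal N_\epsilon(X)\le\tilde{\mathcal N}_\epsilon(X)$ from \cref{cov pack} and then invoking \cref{lem inegalite} to absorb the factor $2$. You have merely unpacked the mechanism of \cref{lem inegalite}/\cref{ineg cons} explicitly, which is a faithful (and slightly more detailed) rendering of the intended argument.
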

The proof is provided by direct application of \cref{cov pack,lem inegalite}. 
\begin{remark}
\label{other expr}
Another property for the scaling $ \Scl^{p,q}$ from  \cref{ex scl}, with $ p ,  q \ge 1 $, is that the upper and lower box scales  for a metric space $ ( X,d)$ can be written as: 
 \[ \underline \Scl^{p,q} _B ( X ) =  \liminf_{\epsilon \rightarrow 0 } \frac{\log^{\circ p}  ( \mathcal{N}_\epsilon ( X)  )}{ \log^{\circ q } ( \epsilon^{-1} ) } \]
 and 
  \[ \overline \Scl^{p,q} _B ( X ) =  \limsup _{\epsilon \rightarrow 0 }  \frac{\log^{\circ p}  ( \mathcal{N}_\epsilon ( X)  ) }{ \log^{\circ q } ( \epsilon^{-1} ) }  \; . \]
 In particular, for dimension and order:  
   \[ \underline \dim _B ( X ) =  \liminf_{\epsilon \rightarrow 0 } \frac{\log ( \mathcal{N}_\epsilon ( X)  )}{   - \log \epsilon }   \quad, \quad  \overline \dim _B ( X ) =  \limsup _{\epsilon \rightarrow 0 }  \frac{\log ( \mathcal{N}_\epsilon ( X)  ) }{  - \log \epsilon }  \; , \]
 and 
 \[ \underline \ord _B ( X ) =  \liminf_{\epsilon \rightarrow 0 } \frac{ \log \log ( \mathcal{N}_\epsilon ( X)  )}{   - \log \epsilon}   \quad, \quad  \overline \ord _B ( X ) =  \limsup _{\epsilon \rightarrow 0 }  \frac{ \log \log ( \mathcal{N}_\epsilon ( X)  ) }{   - \log \epsilon }  \; . \] 
The above equalities coincide with the most usual definitions of  box dimensions and orders.
\end{remark}
\subsection{Hausdorff scales}
The definition of Hausdorff scales, generalizing Hausdorff dimension, is introduced here using the definition of Hausdorff outer measure as given for instance by Tricot  in \cite{tricot1982two}. We still consider a metric space $(X,d)$. Given a non-decreasing function $\phi \in C(\R_+^*,  \R_+^*)$, such that $ \phi ( \epsilon ) \rightarrow 0$ when $ \epsilon \rightarrow 0$, we recall: 
\[ \mathcal{H}_\epsilon^\phi(X)   := \inf_{ J \text{ countable set} }\left\{ \sum_{j \in J } \phi( \vert B_j \vert)   :  X = \bigcup_{j \in J} B_j, \ \forall j \in J :   \vert  B_j \vert  \le \epsilon \right\} \; ,  \] 
where  $ \vert B \vert$ is the radius of a ball $B \subset X$.  
A countable family $( B_j ) _{j \in J }$ of balls with radius at most $ \epsilon > 0$  such that $ X = \bigcup_{j \in J } B_j$ will be called an \emph{$\epsilon$-cover}  of $X$. \footnote{Note that the historical construction of the Hausdorff measures uses subsets of $X$ with diameter at most $ \epsilon $  instead of the balls with radius at most $ \epsilon $. However, both these constructions  lead to the same definitions of Hausdorff scales.}
Since the set of $\epsilon$-cover is non-decreasing for inclusion as $ \epsilon$ decreases to $ 0$, the following limit does exist:
\[ \mathcal{H}  ^\phi( X) := \lim_{ \epsilon \rightarrow 0} \mathcal{H}_\epsilon^\phi ( X)  \; . \]
Now replacing $(X,d)$ in the previous definitions by any subset $ E$ of $X$ endowed with the same metric $d$,  we observe that $ \mathcal{H}^\phi$ defines an outer-measure on $X$. It is usually called the $ \phi$-Hausdorff measure on $X$. We now introduce the following: 
\begin{definition}[ Hausdorff scale] \label{H scale}
 The \emph{Hausdorff scale} of a metric space  $(X,d)$ is defined by: 
\[\Scl_H  X  = \sup \left\{ \alpha > 0  : \mathcal{H}^{\scl_\alpha}(X) = + \infty  \right\} = \inf \left\{ \alpha > 0  : \mathcal{H}^{\scl_\alpha}(X) = 0   \right\} \; .\]
\end{definition}
Note that the above definition gives us two quantities that are a priori not equal. However, the mild assumptions in the definition of scaling allow us to verify that they indeed coincide and allow us to use the machinery of Hausdorff outer measure to define metric invariants generalizing Hausdorff dimension.  Thus  scalings allow us to have some consistent extension of the definition of Hausdorff dimension. 
\begin{proof}[Proof of the equality in \cref{H scale}] 
It is clear from its definition that $ \alpha  \mapsto \mathcal{H}^{\scl_\alpha}(X)$ is non-increasing. It is then enough to check that if there exists  $\alpha > 0$ such that $ 0 <  \mathcal{H}^{\scl_\alpha}(X) < +  \infty$ then,  for any positive $\delta < \alpha  $, it holds: 
\[ \mathcal{H}^{\scl_{\alpha+ \delta }}(X) = 0 \qand \mathcal{H}^{\scl_{\alpha- \delta }}(X) = + \infty  \; .  \]
Let us fix $\eta >0$, by  \cref{scaling} ,  for $ \epsilon > 0$ small it holds: 
\[ \scl_{\alpha + \delta }(\epsilon) \le \eta \cdot \scl_{\alpha}(\epsilon) \qand   \scl_{\alpha }(\epsilon) \le \eta \cdot  \scl_{\alpha- \delta }(\epsilon)  \; .  \] 
Since $ \epsilon $ is small, it holds: 
\[ 0 < \frac{1}{2} \mathcal{H}^{\scl_\alpha}(X) \le  \mathcal{H}_\epsilon^{\scl_{\alpha}} (X) \le  \mathcal{H}^{\scl_{\alpha}}(X) < +  \infty \; . \]
Given $ ( B_j )_{j\in J } $ an  $\epsilon$-cover of $X$, the following holds: 
\[ \frac{1}{2}\mathcal{H}^{\scl_{\alpha}}(X) \le \mathcal{H}^{\scl_{\alpha}}_\epsilon (X) \le  \sum_{j \in J} \scl_\alpha ( \vert B_j \vert )  \;,  \]  
and then:
\[  \frac{1}{2\eta }  \mathcal{H}^{\scl_{\alpha}}(X)  \le  \frac{1}{\eta }\sum_{j \in J} \scl_{\alpha } ( \vert B_j \vert ) \le  \sum_{j \in J } \scl_{\alpha - \delta } ( \vert B_j \vert )  \; . \]
Since this holds for every $ \epsilon$-cover, the latter inequality leads to: 
\[ \frac{1}{2\eta }  \mathcal{H}^{ \scl_\alpha} (X) \le \mathcal{H}_\epsilon^{\scl_{\alpha- \delta}}(X) \;  , \]
and so:
\begin{equation} \label{h1} 
\frac{1}{2\eta }  \mathcal{H}^{ \scl_\alpha} (X) \le \mathcal{H}^{\scl_{\alpha- \delta}}(X) \;  .
\end{equation} 
On the other side,  there exists an $\epsilon$-cover $ (B_j)_{j \in J }$ of $E$ such that: 
\[ \sum_{j \in J } \scl_{\alpha } ( \vert B_j \vert ) \le 2 \mathcal{H}_\epsilon ^{\scl_{\alpha}}(X) \; . \] 
Now since $ \mathcal{H}_\epsilon ^{\scl_{\alpha}}(X) \le  \mathcal{H}^{\scl_{\alpha}}(X) $, this leads to: 
\[\sum_{j \in J} \scl_{\alpha + \delta} ( \vert B_j \vert ) \le \eta \cdot \sum_{j} \scl_{\alpha } ( \vert B_j \vert ) \le  2 \eta  \cdot \mathcal{H}^{\scl_{\alpha}}(X) \; .\]
From there: 
\begin{equation}\label{h2} 	
\mathcal{H}_\epsilon ^{\scl_{\alpha+ \delta }}(X) \le 2 \eta  \cdot  \mathcal{H}^{\scl_{\alpha}}(X) \; ,
\end{equation}
and this holds for every small $ \epsilon$. 
Taking the limit as $ \epsilon \to  0 $ in \cref{h1,h2} gives: 
\[ \frac{1}{2\eta }  \mathcal{H}^{ \scl_\alpha} (X) \le \mathcal{H}^{\scl_{\alpha- \delta}}(X) \qand \mathcal{H}_\epsilon ^{\scl_{\alpha+ \delta }}(X) \le 2 \eta  \cdot  \mathcal{H}^{\scl_{\alpha}}(X)  \; . \]
To conclude, note that as the latter holds for  $\eta$ arbitrarily small, it follows that   $ \mathcal{H}^{\scl_{\alpha- \delta }} (X) =   + \infty$ and $ {\mathcal{H}^{\scl_{\alpha+ \delta}} (X) =  0}$.
\end{proof}
As box scales, Hausdorff scales are non-decreasing for inclusion. We will see a stronger property of Hausdorff scales in \cref{countstab}. 
\subsection{Packing scales}
\subsubsection{Packing scales through modified box scales}
The original construction of packing dimension relies on the packing measure introduced by Tricot in \cite{tricot1982two}. We first define packing scales by modifying upper box scales and we show later how they are related to packing measures.
\begin{definition}[Packing scale] \label{packing l box}
Let $(X,d)$ be a metric space and $ \Scl$ a scaling. The \emph{packing scale} of $X$ is defined by: 
\[ \Scl_P X = \inf \left\{   \sup_{ n \ge 1} \overline{\Scl}_B E_n  :  ( E_n)_{n \ge 1} \in X^{\N^*}  \text{ s.t. }\bigcup_{n \ge 1} E_n = X  \right\} \; . \]
\end{definition}
The following comes directly from the definition of packing scale:
\begin{proposition} \label{Falconer paquet boite}
Let $(X,d)$ be a metric space and let $ \Scl$ be a scaling. It holds:
\[ \Scl_P X \le \overline{\Scl}_B X  \; . \]
\end{proposition}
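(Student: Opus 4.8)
The plan is to produce a single explicit countable cover of $X$ whose associated supremum equals $\overline{\Psi}_B X$, and then to read off the inequality directly from the definition of $\Psi_P X$ as an infimum over all such covers in \cref{packing l box}. Concretely, I would take the constant sequence $E_n = X$ for every $n \ge 1$. This sequence trivially satisfies $\bigcup_{n \ge 1} E_n = X$, so it is an admissible competitor in the infimum defining $\Psi_P X$.

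For this particular choice one has $\sup_{n \ge 1} \overline{\Psi}_B E_n = \overline{\Psi}_B X$. Hence, by the definition of the packing scale,
\[ \Psi_P X \;\le\; \sup_{n \ge 1} \overline{\Psi}_B E_n \;=\; \overline{\Psi}_B X \; , \]
which is exactly the claimed inequality.

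There is essentially no obstacle here: the only thing to check is that the constant sequence $(X)_{n\ge 1}$ is indeed one of the sequences over which the infimum in \cref{packing l box} is taken, and this is immediate from the requirement $\bigcup_{n\ge 1}E_n=X$. If one prefers a genuine decomposition one may instead take $E_1 = X$ and $E_n = \emptyset$ for $n \ge 2$; then $\overline{\Psi}_B \emptyset = 0$ since $\mathcal{N}_\epsilon(\emptyset) = 0$ for every $\epsilon > 0$, so the supremum is again $\overline{\Psi}_B X$, and the same bound follows.
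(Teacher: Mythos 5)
Your proof is correct and is exactly the argument the paper has in mind: the paper states this proposition with the remark that it ``comes directly from definition of packing scale,'' and the intended justification is precisely that the constant cover $E_n=X$ is an admissible competitor in the infimum of \cref{packing l box}. Nothing further is needed.
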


\subsubsection{Packing measures}
In this paragraph we show the relationship between packing measures and packing scales. Let us first recall a few definitions. 

Given $ \epsilon >0$, an \emph{$\epsilon$-pack} of a metric space $ (X,d)$  is a countable collection of disjoint balls  of $X$ with radii at most $ \epsilon$.  As for Hausdorff outer measure, consider  $\phi: \mathbb R _+^* \to \mathbb R _+^* $ an non-decreasing function such that $ \phi( \epsilon) \rightarrow 0$ as $ \epsilon \rightarrow 0$. For $ \epsilon> 0$, put:   
\[ \mathcal{P}_\epsilon ^{\phi}(X) := \sup \left\{ \sum_{i \in I } \phi (  \vert B_i \vert  ) : (B_i )_ { i \in I  }\ \text{ is an $ \epsilon$-pack of $X$}\right\} . \]
Since $ \mathcal{P}_\epsilon^\phi(X)$ is non-increasing when $\epsilon$ decreases to $0$,  the following quantity is well defined: 
\[ \mathcal{P}_0^\phi (X) := \lim_{ \epsilon  \rightarrow 0} \mathcal{P}_\epsilon^\phi(X).\]
The idea of Tricot is to build an outer measure from this quantity: 
\begin{definition}[Packing  measure]For every subset $ E $ of $X$ endowed with the same metric $d$, the \emph{packing  $\phi$-measure} of $E$ is defined by: 
\[ \mathcal{P}^\phi (E) = \inf  \left\{ \sum_{n \ge  1 } \mathcal{P}_0^{\phi} ( E_n ) : E =  \bigcup_{n \ge 1 } E_n  \right\} \; . \]\end{definition}
Note that 
$\mathcal{P}^\phi$ is an outer-measure on $X$ and can eventually be infinite or null. The following shows the equivalence of Tricot's counterpart definition of the packing scale; this will be useful to show the equality between the upper local scale and the packing scale of a measure given by  \cref{resultsC} eq. (c\&g).
 
\begin{proposition} \label{car packing}
The packing scale of a metric space $ (X,d)$ verifies: 
\[ \sup \left\{ \alpha > 0  : \mathcal{P}^{\scl_\alpha} (X) = +  \infty \right\} = \Scl_P X =\inf  \left\{ \alpha > 0 : \mathcal{P}^{\scl_\alpha} (X) = 0 \right\}   \; . \]
\end{proposition}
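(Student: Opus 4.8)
The statement to establish is \cref{car packing}: the packing scale $\Psi_P X$, defined through the modified-box-scale formula of \cref{packing l box}, coincides with the critical exponent of the packing $\psi_\alpha$-measure. Since $\alpha \mapsto \mathcal{P}^{\psi_\alpha}(X)$ is non-increasing (because the family $\Psi$ is non-increasing in $\alpha$ and the supremum defining $\mathcal{P}^\phi_\epsilon$ respects this), the two outer quantities on the left and right are a well-defined common pair of critical exponents once one knows that $\mathcal{P}^{\psi_\alpha}(X)$ jumps from $+\infty$ to $0$; this is proved exactly as the equality in \cref{H scale} was, invoking the left-hand condition in \eqref{def of scl} to push $\psi_{\alpha+\delta} \le \eta\,\psi_\alpha \le \eta^2 \psi_{\alpha-\delta}$ for small radii. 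So the real content is the middle equality $\Psi_P X = \inf\{\alpha : \mathcal{P}^{\psi_\alpha}(X) = 0\}$, and I would split this into two inequalities.

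\textbf{The inequality $\Psi_P X \le \inf\{\alpha : \mathcal{P}^{\psi_\alpha}(X)=0\}$.} Suppose $\mathcal{P}^{\psi_\alpha}(X) = 0$. By definition of the packing measure as an infimum over countable decompositions $X = \bigcup_n E_n$, and since a single decomposition already forces $\sum_n \mathcal{P}_0^{\psi_\alpha}(E_n)$ to be small, one can find a decomposition with each $\mathcal{P}_0^{\psi_\alpha}(E_n) < \infty$; in fact $\mathcal{P}_0^{\psi_\alpha}(E_n) = 0$ for all $n$. The key sublemma is then: if $\mathcal{P}_0^{\psi_\alpha}(E) < \infty$ then $\overline\Psi_B E \le \alpha$. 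This is the analogue of the classical fact relating finite packing pre-measure to upper box dimension: an $\epsilon$-separated set of maximal cardinality $\tilde{\mathcal N}_\epsilon(E)$ yields an $\epsilon$-pack by taking balls of radius $\epsilon/2$ around the separated points, so $\tilde{\mathcal N}_\epsilon(E)\cdot \psi_\alpha(\epsilon/2) \le \mathcal{P}^{\psi_\alpha}_{\epsilon/2}(E) \le \mathcal{P}_0^{\psi_\alpha}(E) < \infty$ once $\epsilon$ is small; combined with \cref{packing numb} (box scales via packing numbers), \cref{ineg cons} to absorb the factor $\tfrac12$, and the definition of $\overline\Psi_B$, this gives $\overline\Psi_B E \le \alpha$ — actually one needs a hair of room, deducing $\overline\Psi_B E \le \alpha + \delta$ for every $\delta>0$. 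Hence from a decomposition witnessing $\mathcal{P}^{\psi_\alpha}(X)=0$ we get $\sup_n \overline\Psi_B E_n \le \alpha$ (up to $\delta$), so $\Psi_P X \le \alpha$, and taking the infimum over such $\alpha$ finishes this direction.

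\textbf{The reverse inequality $\inf\{\alpha:\mathcal{P}^{\psi_\alpha}(X)=0\} \le \Psi_P X$.} Take any $\alpha > \Psi_P X$; then there is a countable decomposition $X = \bigcup_n E_n$ with $\overline\Psi_B E_n < \alpha$ for every $n$. Fix $\beta$ with $\overline\Psi_B E_n < \beta < \alpha$ (one may pick $\beta$ depending on $n$, say $\beta_n$, all $<\alpha$). For each $n$: $\overline\Psi_B E_n < \beta_n$ means $\tilde{\mathcal N}_\epsilon(E_n)\cdot\psi_{\beta_n}(\epsilon) \to 0$, hence is bounded by some constant $M_n$ for all small $\epsilon$. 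For an $\epsilon$-pack $(B_i)_{i\in I}$ of $E_n$, the centers are $2\epsilon$-separated (disjoint balls of radius $\le\epsilon$), so $\Card I \le \tilde{\mathcal N}_{2\epsilon}(E_n)$, and thus $\sum_i \psi_\alpha(|B_i|) \le \Card I \cdot \psi_\alpha(\epsilon) \le \tilde{\mathcal N}_{2\epsilon}(E_n)\cdot\psi_\alpha(\epsilon)$. Now use \eqref{def of scl}: since $\alpha > \beta_n$, for $\lambda>1$ close to $1$ and $\epsilon$ small, $\psi_\alpha(\epsilon) = o(\psi_{\beta_n}(\epsilon^\lambda))$, and $\epsilon^\lambda \le$ something comparable to a scaled $\epsilon$ lets one bound $\tilde{\mathcal N}_{2\epsilon}(E_n)\cdot\psi_{\beta_n}(\cdot) \le M_n$; chaining these shows $\mathcal{P}^{\psi_\alpha}_\epsilon(E_n) \to 0$, i.e. $\mathcal{P}_0^{\psi_\alpha}(E_n) = 0$. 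Therefore $\mathcal{P}^{\psi_\alpha}(X) \le \sum_n \mathcal{P}_0^{\psi_\alpha}(E_n) = 0$, so $\alpha$ belongs to the set $\{\alpha : \mathcal{P}^{\psi_\alpha}(X)=0\}$; letting $\alpha \downarrow \Psi_P X$ gives the claim.

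\textbf{Main obstacle.} The delicate point is the sublemma $\mathcal{P}_0^{\psi_\alpha}(E) < \infty \Rightarrow \overline\Psi_B E \le \alpha$ (and its quantitative $+\delta$ version), because passing between packs of radius $\epsilon$ and separated sets at scale $2\epsilon$ (or $\epsilon/2$) introduces constant factors in the radius that must be absorbed using \cref{ineg cons} and the $o(\cdot)$ conditions of the scaling — this is precisely why the mild scaling hypotheses are needed, and it is the step where one must be careful that the argument is uniform enough that the limit as $\epsilon\to0$ behaves well. Everything else is a repackaging of the Hausdorff-scale well-definedness argument and bookkeeping with countable decompositions.
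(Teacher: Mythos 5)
Your first direction, $\Psi_P X \le \inf\{\alpha : \mathcal{P}^{\psi_\alpha}(X)=0\}$, is essentially the paper's own argument: it is the right-hand inequality of \cref{car up box lem3}, obtained by turning a maximal $\epsilon$-separated set into a pack of disjoint balls of radius $\epsilon/2$ and absorbing the constant with \cref{ineg cons}. (Two small slips there: the inequality $\mathcal{P}^{\psi_\alpha}_{\epsilon/2}(E)\le\mathcal{P}^{\psi_\alpha}_0(E)$ goes the wrong way, since $\mathcal{P}^{\psi_\alpha}_0$ is the decreasing limit of $\mathcal{P}^{\psi_\alpha}_\epsilon$ — what you actually use is that a finite limit forces $\mathcal{P}^{\psi_\alpha}_\epsilon(E)$ to be eventually bounded; and a decomposition witnessing $\mathcal{P}^{\psi_\alpha}(X)=0$ only gives each $\mathcal{P}^{\psi_\alpha}_0(E_n)$ finite, not zero, but finiteness is all your sublemma needs.) The reduction to a common critical exponent via the jump lemma is also the paper's \cref{car up box lem2}.

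The second direction contains a genuine gap. You assert that for an $\epsilon$-pack $(B_i)_{i\in I}$ of $E_n$ the centers are $2\epsilon$-separated, hence $\Card I \le \tilde{\mathcal N}_{2\epsilon}(E_n)$ and $\sum_i\psi_\alpha(\vert B_i\vert)\le \tilde{\mathcal N}_{2\epsilon}(E_n)\cdot\psi_\alpha(\epsilon)$. This is false: by \cref{def pack number} and the definition of an $\epsilon$-pack, the balls are disjoint with radii \emph{at most} $\epsilon$, so the radii may be arbitrarily small, the centers are only separated at the scale of the individual radii, and $I$ may be countably infinite. The whole chain leading to $\mathcal{P}_0^{\psi_\alpha}(E_n)=0$ therefore collapses, and this is precisely the hard half of the proposition. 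The paper handles it in the left-hand inequality of \cref{car up box lem3} by a multiscale counting argument: assuming $\mathcal{P}_0^{\psi_\alpha}(E)=+\infty$, it takes a pack with $\sum_j\psi_\alpha(\vert B_j\vert)>1$, bins the balls dyadically according to the size of $\psi_\alpha(\vert B_j\vert)$, and extracts a bin $j$ containing more than $j^{-2}2^{j}$ balls of comparable radius; only those balls, all at essentially one scale, yield a lower bound on a packing number $\tilde{\mathcal N}_{\psi_\alpha^{-1}(2^{-(j+1)})}(E)$ and hence on $\overline\Psi_B E$ via \cref{packing numb} and the scaling axioms. A repaired version of your direction would have to do the analogous bookkeeping in reverse — bound the contribution of each radius-bin of the pack by $\tilde{\mathcal N}$ at that bin's own scale and sum a geometric-type series using condition $(*)$ — but the single-scale comparison you wrote is not valid.
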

\begin{proof} Let $ ( E_n)_{n \ge1 } $ be a family of subsets of $ X $. Since each map $ \alpha  \mapsto \mathcal{P}_0^{\scl_\alpha} ( E_n ) $ is non-increasing and non negative, we have: 
\begin{equation}\label{eq up box 0}  \inf \left\{ \alpha > 0 : \sum_{n \ge 1} \mathcal{P}_0^{\scl_\alpha}  ( E_n) = 0     \right\}  = \sup_{n \ge 1}  \inf \left\{ \alpha > 0 :  \mathcal{P}_0^{\scl_\alpha}  ( E_n) = 0    \right\} \;   .  \end{equation} 
We decompose the proof into two intermediary steps given by the following lemmas:
\begin{lemma}\label{car up box lem2}  Given $ \alpha > 0, $ if $ \mathcal{P}_0^{\scl_\alpha} (E)$ is finite, then for every  $ \delta \in(0, \alpha) $, it holds:
\[ \mathcal{P}_0^{\scl_{ \alpha + \delta}} (E) = 0 \qand  \mathcal{P}_0^{\scl_{ \alpha - \delta}}  (E) = + \infty \; .\]
\end{lemma}
\begin{lemma}\label{car up box lem3}
For every $E\subset X$, it holds:
\[ \sup \left\{ \alpha > 0 : \mathcal{P}_0^{\scl_\alpha} ( E ) = + \infty \right\} = \overline \Scl_B E = \inf  \left\{ \alpha > 0 :  \mathcal{P}_0^{\scl_\alpha} ( E ) = 0 \right\} \; . \] 
\end{lemma}
\cref{car up box lem2} will allow proving \cref{car up box lem3}. 
Before proving these lemmas let us see how they allow us to conclude the proof of \cref{car packing}.
First note that the second equality of \cref{car up box lem2} implies:
\begin{equation*}\label{eq up box 1}  \sup \left\{ \alpha > 0 : \sum_{n \ge 1} \mathcal{P}_0^{\scl_\alpha}  ( E_n) = + \infty     \right\} 
=  \sup_{n \ge 1}  \sup \left\{ \alpha > 0 :  \mathcal{P}_0^{\scl_\alpha}  ( E_n) =+\infty    \right\} \; .     
\end{equation*}
Consequently by \cref{car up box lem3} it holds: 
\[  \sup \left\{ \alpha > 0 : \sum_{n \ge 1} \mathcal{P}_0^{\scl_\alpha}  ( E_n) = + \infty     \right\} =  \sup_{n \ge 1} \overline{\Scl}_B E_n = \inf \left\{ \alpha > 0 : \sum_{n \ge 1} \mathcal{P}_0^{\scl_\alpha}  ( E_n) = 0    \right\} \; .   \] 
Taking the infimum over families $(E_n)_{n \ge 1} $ that cover $ X $, we obtain the desired result. 
\end{proof}
It remains to show \cref{car up box lem2,car up box lem3}.
\begin{proof}[Proof of \cref{car up box lem2}] 
Given $ \eta > 0$, by \cref{scaling} of scaling, for $ \epsilon > 0 $ small enough, it holds: 
\begin{equation} \label{eq pcar}
\scl_{\alpha + \delta}(\epsilon) \le \eta \cdot  \scl_{\alpha}(\epsilon) \qand   \scl_{\alpha }(\epsilon) \le  \eta \cdot \scl_{\alpha- \delta}(\epsilon)  \; .
\end{equation}  
Let  $(  B_j) _{j \ge  1 } $ be  an  $\epsilon$-pack of $E$. Then, by the above \cref{eq pcar} it holds: 
\[ \eta^{-1} \cdot \sum_{ j \ge 1} \scl_{\alpha +\delta} ( \vert B_j \vert ) \le  \sum_{ j \ge 1} \scl_{\alpha } ( \vert B_j \vert )  \le  \eta  \cdot \sum_{ j \ge 1} \scl_{\alpha  - \delta} ( \vert B_j \vert )  \; .  \] 
As this holds for every $ \delta $ - pack of $E$, it follows: 
\[ \eta^{-1} \cdot  \mathcal{P}_\epsilon^{\scl_{\alpha + \delta} } (E) \le    \mathcal{P}_\epsilon^{\scl_{\alpha } } (E)  \le \eta \cdot  \mathcal{P}_\epsilon^{\scl_{\alpha - \delta} } (E)   \; . \]
Taking the limit as $ \epsilon$ goes to $ 0$ and $ \eta$  arbitrary small allows us to conclude. \end{proof}

\begin{proof}[Proof of \cref{car up box lem3}] By \cref{car up box lem2}, it suffices to show that:  
\begin{equation}\label{cara final}\sup \left\{ \alpha > 0 : \mathcal{P}_0^{\scl_\alpha} ( E ) = + \infty \right\} \le  \overline \Scl_B E  \le  \inf  \left\{ \alpha > 0 :  \mathcal{P}_0^{\scl_\alpha} ( E ) = 0 \right\} \; . \end{equation} 
We start with the second inequality. If $ \mathcal{P}_0^{\scl_\alpha} ( E)  >  0$ for every $ \alpha > 0 $, there is nothing to prove. Thus consider $ \alpha > 0$ such that $\mathcal{P}_0^{\scl_\alpha} ( E ) =  0 $. Then for every $ \epsilon > 0 $ sufficiently small it holds $ \mathcal{P}_\epsilon^{\scl_\alpha} ( E ) < 1  $. In particular, the packing number (see def. \ref{def pack number}) satisfies $ \tilde{ \mathcal{N}}_\epsilon ( E) \cdot \scl_\alpha ( \epsilon) < 1 $. By \cref{packing numb} we obtain $ \overline{\Scl}_B E \le \alpha$  which gives the second inequality by taking the infima of such $ \alpha > 0$. 

To prove the first inequality, assume that there exists $ \alpha > 0 $ such that  $ \mathcal{P}_0^{\scl_\alpha} ( E ) = + \infty$, otherwise there is nothing to prove. For such an $ \alpha $ and for every  $ \epsilon > 0 $ there exists an $ \epsilon$-pack $ ( B_j ) _{ j \ge 1} $ such that: 
\begin{equation} \label{eq p1}
\sum_{j \ge 1} \scl_\alpha ( \vert B_j \vert ) > 1  \; . 
\end{equation} 
For an integer $ k \ge 1 $, denote: 
\begin{equation} \label{def nk}
n_k := \Card \left\{ j \ge 1 : 2^{-(k+1)} \le \scl_\alpha ( \vert B_j \vert ) < 2^{-k} \right\} \; . 
\end{equation} 
Since $\scl_\alpha$ is non-decreasing, it holds:  
\[ \sum_{k \ge 1}  n_k  \cdot 2^{-k }   \ge \sum_{ j \ge 1}  \scl_\alpha ( \vert B_j \vert )   \; . \]
Thus by \cref{eq p1}, it holds: 
\begin{equation} \label{sum nk}
 \sum_{ k \ge 1} n_k \cdot 2^{-k} > 1 \; .  
\end{equation}
Note that, as $ \vert B_j \vert \le \delta $ for every $ j \ge 1 $ , it holds $ n_k = 0 $ for every $ k < - \log_2 \scl_\alpha(\delta) $. 
Then for $ \delta$ small it holds: 
\begin{fact}
There exists an integer $ j \ge 2$  such that:
\[ n_j > j^{-2} \cdot 2^j  \; . \]
\end{fact} 
\begin{proof}
Otherwise we would have: 
\[ \sum_{k \ge 1}  n_k \cdot 2^{-k } \le   \sum_{k \ge  2 } \frac{  1}{  k ^2  }  <  1 \;  , \]
as $ n_0 = n_1 = 0$ for small $ \delta$, and this contradicts  \cref{sum nk}. 
\end{proof}
This latter fact translates as: there exist at least $ n_j$ disjoint balls with radii at least $ \scl_\alpha^{-1} ( 2^{- ( j +1) } ) $. 
This implies: 
\[ \tilde{\mathcal{N}}_ {  \scl_\alpha^{-1} ( 2^{-(j+1)}) } ( E) \ge  n_j > j^{-2} 2 ^j \; , \]
and moreover:
\[  j \ge - \log_2 \scl_\alpha ( \delta) \; . \]
Since these inequalities hold true for $ \delta$ arbitrarily small, there exists an increasing sequence of integers $( j_n )_{n \ge 1 } $ such that:
\begin{equation} \label{packnumbj}
\tilde {\mathcal{N}}_ {\epsilon_n} (E)   > j_n^{-2} 2 ^{j_n} \quad \text{ where }  \epsilon_n := \scl_\alpha^{-1} ( 2^{-( j_n + 1 )} )  \;   .
\end{equation}
Given a positive $ \beta < \alpha$, by \cref{scaling} of scaling, for $ \lambda > 1 $ close to $1$, it holds: 
\begin{equation} \label{eq scl reform}
\scl_\beta ( \epsilon )  \cdot  \left(  \scl_{\alpha} ( \epsilon)  \right)  ^{ -  \lambda^{-1}  }   \xrightarrow[\epsilon \to 0 ]{} + \infty \; . 
\end{equation} 
On the other hand, given such a  $ \lambda> 1$, for $n$ large enough, it holds: 
\[ j_n^{-2} 2^{j_n} \ge 2^{ \lambda^{-1 } \cdot ( j_n + 1 )  } \; .  \]
It follows by \cref{packnumbj}: 
\[ \tilde {\mathcal{N}}_ {\epsilon_n} (E) \ge   \left(  2^{  -  ( j_n + 1 )  } \right) ^{ -  \lambda^{-1}}  =  \left(  \scl_\alpha ( \epsilon_n)  \right) ^{ - \lambda^{-1 }}  \; .  \]
This together with \cref{eq scl reform} gives: 
\[ \scl_\beta ( \epsilon_n) \cdot \tilde {\mathcal{N}}_ {\epsilon_n} (E)     > \scl_\beta ( \epsilon )  \cdot  \left(  \scl_{\alpha} ( \epsilon)  \right)  ^{ -  \lambda^{-1}  }  \xrightarrow[\epsilon \to 0 ]{} + \infty \; . \]
By \cref{packing numb} we deduce $ \overline\Scl_B E \ge  \beta$. Taking $ \beta $ close to $ \alpha$ ends the proof.
\end{proof}
\subsection{Properties and comparison of scales of metric spaces \label{comp of scales}}
We first give a few basic properties of scales that would allow us to compare them. 
Since both packing and Hausdorff scales are defined via measures, they both are $\sigma$-stable as shown in the following:  
\begin{lemma}
\label{countstab}
Let $ (X,d)$ be a metric space. 
Let  $I $ be a countable set  and $(E_i)_{i \in I}$ a covering of $ X$, then for any scaling $ \Scl$: 
\[  \Scl_H X = \sup_{i \in I}  \Scl_H  E_i \qand \Scl_P X = \sup_{i \in I}  \Scl_P  E_i \; . \]
\end{lemma}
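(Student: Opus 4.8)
The plan is to prove the two equalities separately, each by establishing the inequality $\le$ and the reverse $\ge$. Both rely on the fact, already available in the excerpt, that Hausdorff and packing scales are defined through outer measures ($\mathcal{H}^\phi$ in \cref{H scale}, and $\mathcal{P}^\phi$ characterized in \cref{car packing}), together with the monotonicity of these scales under inclusion (Hausdorff scales are monotone as noted after \cref{H scale}, and packing scales inherit monotonicity from \cref{Falconer paquet boite} applied to subcovers, or directly from the defining infimum).

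For the inequality $\sup_{i\ge 1}\Psi_H E_i \le \Psi_H X$: since each $E_i \subset X$, monotonicity of the Hausdorff scale gives $\Psi_H E_i \le \Psi_H X$ for every $i$, hence the supremum is $\le \Psi_H X$. The same argument with packing scales gives $\sup_{i\ge 1}\Psi_P E_i \le \Psi_P X$. For the reverse inequality $\Psi_H X \le \sup_{i\ge 1}\Psi_H E_i$, I would argue as in classical dimension theory: fix $\alpha > \sup_i \Psi_H E_i$, so that $\mathcal{H}^{\psi_\alpha}(E_i) = 0$ for every $i$ by \cref{H scale}; then by countable subadditivity of the outer measure $\mathcal{H}^{\psi_\alpha}$, we get $\mathcal{H}^{\psi_\alpha}(X) \le \sum_{i\ge 1}\mathcal{H}^{\psi_\alpha}(E_i) = 0$, whence $\Psi_H X \le \alpha$; letting $\alpha \downarrow \sup_i \Psi_H E_i$ finishes it. For the packing part, the reverse inequality is essentially immediate from \cref{packing l box}: the family $(E_i)_{i\ge 1}$ is one admissible countable cover of $X$, so $\Psi_P X \le \sup_{i\ge 1}\overline{\Psi}_B E_i$ — but wait, that gives the wrong scale on the right. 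Instead one should use the characterization \cref{car packing} via the packing measure $\mathcal{P}^{\psi_\alpha}$, which is a genuine outer measure: for $\alpha > \sup_i \Psi_P E_i$ we have $\mathcal{P}^{\psi_\alpha}(E_i) = 0$ for all $i$, hence by countable subadditivity $\mathcal{P}^{\psi_\alpha}(X) = 0$, so $\Psi_P X \le \alpha$, and we conclude as before.

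Alternatively, and more in the spirit of the paper's own \cref{packing l box}, one can prove $\Psi_P X \le \sup_i \Psi_P E_i$ directly: given any $\eta > 0$, for each $i$ pick a countable cover $(E_{i,n})_{n\ge 1}$ of $E_i$ with $\sup_n \overline{\Psi}_B E_{i,n} \le \Psi_P E_i + \eta \le \sup_j \Psi_P E_j + \eta$; then $(E_{i,n})_{i,n}$ is a countable cover of $X$ witnessing $\Psi_P X \le \sup_{i,n}\overline{\Psi}_B E_{i,n} \le \sup_j \Psi_P E_j + \eta$, and let $\eta \downarrow 0$. I would present this combinatorial version for packing and the outer-measure version for Hausdorff, since each is the cleanest in its setting.

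The main obstacle — really the only subtlety — is making sure the countable subadditivity (for the Hausdorff argument) and the nesting/diagonalization of covers (for the packing argument) are invoked for the correct threshold: the scale is defined as an infimum over $\alpha$ with vanishing measure, so one must take $\alpha$ strictly above the supremum of the $\Psi_\bullet E_i$, apply subadditivity at that fixed $\alpha$, and only then let $\alpha$ decrease; doing it in the wrong order would require subadditivity "in the limit," which need not hold. Everything else is routine once \cref{H scale}, \cref{car packing}, and monotonicity under inclusion are granted.
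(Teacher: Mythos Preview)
Your proposal is correct and, for the Hausdorff equality, identical to the paper's proof: monotonicity for $\ge$, countable subadditivity of $\mathcal{H}^{\psi_\alpha}$ at a fixed $\alpha > \sup_i \Psi_H E_i$ for $\le$.

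For the packing equality the paper is terser: it simply says the result is ``obvious by definition with modified upper box scales,'' which is exactly your diagonalization alternative (pick near-optimal covers of each $E_i$ and concatenate). Your first packing argument, via countable subadditivity of the packing outer measure $\mathcal{P}^{\psi_\alpha}$ and \cref{car packing}, is also valid and arguably more symmetric with the Hausdorff case; the paper opts for the diagonalization because it follows in one line from \cref{packing l box} without invoking the packing-measure machinery. Either route is fine.
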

\begin{proof}
The equality on packing scales is clear by definition. Let us prove the equality on Hausdorff scales. 
By monotonicity of the Hausdorff measure, it holds $ \Scl_H X \ge \sup_{ i \in I  } \Scl_H E_i$.  For the reverse inequality, consider $ \alpha > \sup_{i \in I} \Scl_H E_i$,  then for any  $i \in I $  it holds $ \mathcal{H}^{\scl_\alpha} (E_i) = 0$. Thus, it holds: 
\[ \mathcal{H}^{\scl_\alpha} ( X ) \le \sum_{ i \in I } \mathcal{H}^{\scl_\alpha} (E_i)  = 0 \; ,  \]
and then $\Scl_H X \le \alpha$. Since this is true for any $\alpha > \sup_{i \in I }  \Scl_H E_i$, the desired result comes. 
\end{proof}
Note that $\sigma$-stability is not a property of box scales. To see that, it suffices to consider a countable dense subset of a metric space  $ (X,d) $ with positive box scales. This is actually a basic known fact for the specific case of dimension that naturally still holds there. 

The following lemma shows in particular that the above scales are  bi-Lipschitz invariants. 

\begin{lemma}\label{lem lip}
Let $ (X,d)$ and $( Y,d) $ be two metric spaces such that there exists a Lipschitz map ${ f : (X,d) \to (Y,d) }$.
Then for any scaling $ \Scl$, the scales of $f(X) $ are at most the ones of $X$:
\[ \Scl_H f( X)  \le \Scl_H X ; \quad \Scl_Pf( X)  \le \Scl_P X ; \quad \underline \Scl_B f(X)  \le \underline \Scl_B X; \quad \overline \Scl_B f(X) \le \overline \Scl_B X \; . \]
\end{lemma}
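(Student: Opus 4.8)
The plan is to exploit the elementary fact that a Lipschitz map can only contract the metric geometry by a bounded factor, so it can only decrease each of the covering/packing/Hausdorff quantities that define the scales. Write $L$ for a Lipschitz constant of $f$, so $d(f(x),f(x'))\le L\, d(x,x')$ for all $x,x'\in X$. The key observation, used repeatedly, is that the image $f(B(x,\epsilon))$ is contained in $B(f(x),L\epsilon)$; hence any object witnessing a small value of a covering/packing/measure functional for $X$ at scale $\epsilon$ pushes forward to an object witnessing a comparable value for $f(X)$ at scale $L\epsilon$. Then \cref{ineg cons} and \cref{lem inegalite} absorb the constant $L$ without changing the value of the scale.

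\textbf{Box scales.} First I would treat $\underline\Psi_B$ and $\overline\Psi_B$. If $\{B(x_i,\epsilon)\}_{i\le N}$ covers $X$, then $\{B(f(x_i),L\epsilon)\}_{i\le N}$ covers $f(X)$, so $\mathcal N_{L\epsilon}(f(X))\le \mathcal N_\epsilon(X)$. Writing $g(\eta)=\mathcal N_\eta(f(X))$ and $h(\eta)=\mathcal N_\eta(X)$ this says $g(L\epsilon)\le h(\epsilon)$, and \cref{lem inegalite} (applied with the roles of the two functions, after relabelling the scale variable as in that lemma's proof) gives $\underline\Psi_B f(X)\le \underline\Psi_B X$ and $\overline\Psi_B f(X)\le \overline\Psi_B X$.

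\textbf{Hausdorff scale.} Next, for $\Psi_H$, given an $\epsilon$-cover $(B_j)_{j\in J}$ of $X$ by balls of radius $|B_j|\le\epsilon$, the sets $f(B_j)$ have diameter at most $2L|B_j|$, hence are each contained in a ball of radius $L|B_j|$ (or $2L|B_j|$ — the constant is harmless); these cover $f(X)$ and form an $L\epsilon$-cover. Since $\psi_\alpha$ is non-decreasing, $\sum_j \psi_\alpha(L|B_j|)$ is controlled, and using \cref{ineg cons} to compare $\psi_\alpha(L\,\cdot)$ with $\psi_\beta(\cdot)$ for $\beta<\alpha$ one gets, after passing to the infimum over covers and then $\epsilon\to 0$, that $\mathcal H^{\psi_\alpha}(f(X))=0$ whenever $\mathcal H^{\psi_\beta}(X)=0$ for some $\beta<\alpha$; taking $\beta\uparrow\alpha$ yields $\Psi_H f(X)\le \Psi_H X$.

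\textbf{Packing scale and the main obstacle.} Finally, for $\Psi_P$ I would use \cref{packing l box}: given a cover $X=\bigcup_n E_n$ nearly realizing $\Psi_P X$, the sets $f(E_n)$ cover $f(X)$, and each $f: E_n\to f(E_n)$ is still Lipschitz with constant $L$, so by the already-proved bound on upper box scales $\overline\Psi_B f(E_n)\le \overline\Psi_B E_n$; taking the supremum over $n$ and then the infimum over covers gives $\Psi_P f(X)\le \Psi_P X$. The main (very mild) obstacle is purely bookkeeping: keeping track of the multiplicative constant $L$ through the change of scale and making sure it is genuinely absorbed by the definition of scaling — i.e., that one really is in the regime where \cref{ineg cons}/\cref{lem inegalite} apply (constants close to $1$ are not needed there, any fixed $C>0$ works). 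There is no real analytic difficulty; the statement is a soft consequence of the constant-insensitivity of scales built into \cref{scaling}.
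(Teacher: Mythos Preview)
Your proposal is correct and follows essentially the same approach as the paper: the covering number inequality $\mathcal N_{L\epsilon}(f(X))\le \mathcal N_\epsilon(X)$ handles the box scales via \cref{lem inegalite}, the Hausdorff scale is treated by pushing forward an $\epsilon$-cover and absorbing the constant $L$ through \cref{ineg cons} with a nearby exponent $\beta<\alpha$, and the packing scale follows from the upper box case. Your packing argument via \cref{packing l box} (pushing forward a countable cover $(E_n)$ and applying the upper box inequality to each $f|_{E_n}$) is in fact more explicit than the paper, which simply asserts the packing inequality is ``immediately deduced'' alongside the box inequalities.
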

\begin{proof}
Let us fix $ \epsilon >0$.  Let $ K > 0$  be a Lipschitz constant for $f$. 
We first show the inequalities on box and packing scales.

Consider a minimal covering $ ( B ( x_j, \epsilon ) )_{ 1 \le j \le N} $ of $  \epsilon$-balls centered in $ X$. it holds: 
\[ f (X) =  f \left(  \bigcup_{j =1 }^N  B ( x_j, \epsilon_j )  \right)  \subset  \bigcup_{j =  1 }^N  B ( f(x_j) , K \cdot   \epsilon_j ) \; .  \]
Then $( B( f(x_j)  , K \cdot \epsilon) )_{ 1 \le j \le N} $ is a covering by $ K \cdot \epsilon$-balls of $ f(X) $. Then $ \mathcal{N}_{K \cdot \epsilon} ( f(X) ) \le \mathcal{N}_\epsilon ( X) $ and all the inequalities on the box and packing scales are immediately deduced from \cref{ineq:scaling}. 
Now for Hausdorff scales, consider a countable set $J$ and $( B (x_j, \epsilon_j) )_{j \in J} $ an $ \epsilon$-cover of $X$. Then it holds:
\[ f(X)  \subset \bigcup_{j \in J } B ( f(x_j) , K \cdot   \epsilon_j ) \; . \]
For any $ \alpha > \beta > 0 $ and $ \delta> 0$ small enough, by \cref{ineg cons} , it holds: 
\[ \scl_\alpha ( \delta) \le \scl_\beta ( K^{-1} \cdot  \delta )  \; . \]
Hence for $ \epsilon $ small, it holds: 
\[ \mathcal{H}_{K\cdot \epsilon}^{\scl_ \alpha }( f(X)  )  \le \sum_{j \in J } \scl_\alpha (  K \cdot \epsilon_j   )   \le \sum_{j \in J } \scl_\beta  (   \epsilon_j   ) \;.   \]
As $ \beta > \Scl_H X $, the $ \epsilon $-cover $( B(x_j, \epsilon_j ))_{j \in J} $ can be chosen such that $ \sum_{j \in J } \scl_\beta ( \epsilon_j) $ is arbitrarily small. Consequently, it holds $ \mathcal{H}^{\scl_\alpha}_{K \cdot \epsilon} ( f(X)) = 0 $, and so $ \Scl_H f(X) \le \alpha $. As $ \alpha$ is arbitrarily close to $ \Scl_H X$, it holds:
\[ \Scl_H f(X) \le \Scl_H X \; . \]
\end{proof}

 As a direct application, we obtain the following: 
\begin{corollary} \label{cor dil}
Let $ (X,d)$ and $( Y,d) $ be two metric spaces. Assume that there exists an embedding $ g  : (Y,\delta) \to (X,d) $ such that $ g^{-1} $ is Lipschitz on $g(X)$. 
Then for every scaling $ \Scl$, the scales of $ Y $ are at most the ones of $X$:
\[ \Scl_H Y   \le \Scl_H X ; \quad \Scl_P Y   \le \Scl_P  X ; \quad \underline \Scl_B Y  \le \underline \Scl_B X; \quad \overline \Scl_B Y \le \overline \Scl_B X \; . \]
\end{corollary}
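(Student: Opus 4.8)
The plan is to obtain \cref{cor dil} as an immediate consequence of \cref{lem lip} together with the monotonicity of all four scales under inclusion. First I would set $Z := g(Y)$, viewed as a metric space with the metric $d$ restricted from $X$. By hypothesis the bijection $g^{-1}\colon (Z,d) \to (Y,\delta)$ is Lipschitz, and its image is all of $Y$. So I would apply \cref{lem lip} with the Lipschitz map $f := g^{-1}$ defined on $(Z,d)$; since $f(Z) = Y$, this gives at once
\[ \Psi_H Y \le \Psi_H Z, \quad \Psi_P Y \le \Psi_P Z, \quad \underline\Psi_B Y \le \underline\Psi_B Z, \quad \overline\Psi_B Y \le \overline\Psi_B Z \; . \]

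Next I would compare the scales of the subset $Z$ with those of $X$. For the two box scales this is the monotonicity property recorded among the basic facts on box scales, namely $\underline\Psi_B E \le \underline\Psi_B X$ and $\overline\Psi_B E \le \overline\Psi_B X$ for $E \subseteq X$. For the Hausdorff scale it follows from the monotonicity of the outer measures $\mathcal H^{\psi_\alpha}$ under inclusion, as noted right after \cref{H scale}. For the packing scale one can invoke countable stability (\cref{countstab}) applied to the two-element covering $\{Z,X\}$ of $X$, which gives $\Psi_P X = \max(\Psi_P Z, \Psi_P X)$ and hence $\Psi_P Z \le \Psi_P X$; alternatively, intersecting any countable covering of $X$ with $Z$ and using box monotonicity yields the same inequality directly from \cref{packing l box}. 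Chaining the two displays then gives $\Psi_\bullet Y \le \Psi_\bullet Z \le \Psi_\bullet X$ for each of the four scales, which is the assertion.

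There is no genuine obstacle here: the proof is a one-line composition. The only point worth flagging is that \cref{lem lip} must be invoked on the metric subspace $(g(Y),d)$ rather than on $(X,d)$ — which is exactly why the hypothesis only requires $g^{-1}$ to be Lipschitz on $g(Y)$ — and that scales are intrinsic to a metric space, so replacing $X$ by the subspace $Z$ does not change the scales of $Z$.
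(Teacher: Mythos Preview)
Your proposal is correct and follows exactly the same route as the paper: apply \cref{lem lip} to the Lipschitz map $g^{-1}$ on the subspace $g(Y)$ to get $\Psi_\bullet Y \le \Psi_\bullet g(Y)$, then use monotonicity under inclusion $g(Y)\subset X$ to conclude. The paper's proof is the same two-step chain, only stated more tersely; your added justification of monotonicity for the packing scale via \cref{countstab} is a helpful elaboration but not a different argument.
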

\begin{proof}
By \cref{lem lip} we have $ \Scl_\bullet Y \le \Scl_\bullet g (Y) $ for any $ \Scl_\bullet \in \{ \Scl_H, \Scl_P  \underline{\Scl}_B , \overline{\Scl}_B \}$. As $g ( Y) \subset X $, we have also $ \Scl_\bullet g(Y) \le \Scl_\bullet X $. 
\end{proof}
Remark that \cref{lem lip} and  \cref{cor dil} hold even for scalings that have sub-polynomial behaviors.

The end of this section consists of comparing the different scales introduced and proving \cref{resultsA}. We start by comparing the Hausdorff with lower box scales. The following proposition  generalizes well known facts on dimension. See e.g.  \cite{falconer2004fractal}[(3.17)].  
\begin{proposition}
\label{Falconer hausdorff boite}
 Let $ (X,d)$ be a metric space and $ \Scl$ a scaling, its Hausdorff scale is at most its lower box scale:
\[ \Scl_H   X\le \underline{\Scl}_B   X \; .\]
\end{proposition}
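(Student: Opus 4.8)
The plan is to show that if $\alpha > \underline{\Psi}_B X$, then $\mathcal{H}^{\psi_\alpha}(X) = 0$, which immediately gives $\Psi_H X \le \alpha$ and hence the claimed inequality by taking the infimum over such $\alpha$. The point is that a covering of $X$ by $\mathcal{N}_\epsilon(X)$ balls of radius $\epsilon$ is in particular an $\epsilon$-cover in the sense used to define the Hausdorff outer measure, so $\mathcal{H}^{\psi_\alpha}_\epsilon(X) \le \mathcal{N}_\epsilon(X)\cdot \psi_\alpha(\epsilon)$.

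First I would fix $\alpha > \underline{\Psi}_B X$ and pick $\beta$ with $\alpha > \beta > \underline{\Psi}_B X$. By \cref{def box}, since $\beta > \underline{\Psi}_B X$, the quantity $\mathcal{N}_\epsilon(X)\cdot \psi_\beta(\epsilon)$ does \emph{not} tend to $+\infty$; hence there is a sequence $\epsilon_n \to 0$ along which $\mathcal{N}_{\epsilon_n}(X)\cdot \psi_\beta(\epsilon_n)$ stays bounded, say by a constant $C$. (Here one should be slightly careful: the definition of $\underline{\Psi}_B$ as a supremum means that for $\beta$ above it the convergence to $+\infty$ fails, which gives exactly such a subsequence.) Next, by \cref{ineg cons} applied with the exponents $\alpha > \beta$ and constant $C = 1$, for $\epsilon$ small enough $\psi_\alpha(\epsilon) \le \psi_\beta(\epsilon)$; more importantly, since $\psi_\alpha(\epsilon) = o(\psi_\beta(\epsilon))$ is too weak, I would instead use \cref{scaling} directly: for any $\eta > 0$ and $\epsilon$ small, $\psi_\alpha(\epsilon) \le \eta\cdot \psi_\beta(\epsilon)$ (this follows from the first condition in $(*)$ together with monotonicity, exactly as in the proof of the equality in \cref{H scale}).

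Combining these, along the subsequence $\epsilon_n$ one gets
\[
\mathcal{H}^{\psi_\alpha}_{\epsilon_n}(X) \le \mathcal{N}_{\epsilon_n}(X)\cdot \psi_\alpha(\epsilon_n) \le \eta \cdot \mathcal{N}_{\epsilon_n}(X)\cdot \psi_\beta(\epsilon_n) \le \eta\, C,
\]
for $n$ large. Since $\mathcal{H}^{\psi_\alpha}(X) = \lim_{\epsilon\to 0}\mathcal{H}^{\psi_\alpha}_\epsilon(X)$ exists (the $\epsilon$-covers form a decreasing family), the limit equals the limit along $\epsilon_n$, so $\mathcal{H}^{\psi_\alpha}(X) \le \eta\, C$. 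Letting $\eta \to 0$ gives $\mathcal{H}^{\psi_\alpha}(X) = 0$, hence $\Psi_H X \le \alpha$; and since $\alpha$ was an arbitrary number $> \underline{\Psi}_B X$, we conclude $\Psi_H X \le \underline{\Psi}_B X$.

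The main obstacle, and the only genuinely delicate point, is the passage from "$\mathcal{N}_\epsilon(X)\cdot\psi_\beta(\epsilon)$ does not tend to $+\infty$" to "there is a subsequence along which it is bounded" — one must make sure the definition of the lower box scale as a supremum is used correctly, and that the scaling hypothesis $\psi_\alpha = o(\psi_\beta)$ (rather than merely $\psi_\alpha \le \psi_\beta$) is what upgrades boundedness along a subsequence to a genuine zero limit for $\mathcal{H}^{\psi_\alpha}$. Everything else is the routine observation that ball-coverings are admissible in the Hausdorff construction, plus the already-established fact that $\mathcal{H}^{\psi_\alpha}_\epsilon$ converges as $\epsilon \to 0$. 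Alternatively, one could invoke \cref{contdiscr} to reduce to a well-chosen sequence $r_n$ from the start, but the direct subsequence argument is cleaner here.
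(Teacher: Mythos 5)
Your proof is correct, and it is essentially the contrapositive of the paper's argument: both rest on the single observation that a minimal covering by $\epsilon$-balls is an admissible $\epsilon$-cover, giving $\mathcal{H}^{\psi_\alpha}_\epsilon(X) \le \mathcal{N}_\epsilon(X)\cdot\psi_\alpha(\epsilon)$. The paper runs this in the other direction: it takes $\alpha < \Psi_H X$, uses $\mathcal{H}_\delta^{\psi_\alpha}(X) > 1$ for small $\delta$ to get $\mathcal{N}_\delta(X)\cdot\psi_\alpha(\delta) > 1$ for \emph{all} small $\delta$, and concludes $\underline{\Psi}_B X \ge \alpha$. The trade-off between the two directions is exactly the one you flag: starting from $\alpha > \underline{\Psi}_B X$ you only get boundedness of $\mathcal{N}_{\epsilon}(X)\cdot\psi_\beta(\epsilon)$ along a subsequence, so you need the monotonicity of $\epsilon \mapsto \mathcal{H}^{\psi_\alpha}_\epsilon(X)$ to pass from the subsequence to the full limit, plus the relation $\psi_\alpha = o(\psi_\beta)$ (which indeed follows from condition $(*)$ in \cref{scaling} together with monotonicity of $\psi_\beta$, since $\psi_\beta(\epsilon^\lambda) \le \psi_\beta(\epsilon)$) to kill the constant $C$. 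The paper's direction avoids the subsequence extraction entirely, but it is terse about the symmetric issue on its side: a uniform lower bound $\mathcal{N}_\delta(X)\cdot\psi_\alpha(\delta) > 1$ is not divergence to $+\infty$, and one must still insert an intermediate exponent $\beta < \alpha$ and use $\psi_\alpha = o(\psi_\beta)$ to upgrade it, exactly dual to your $\eta$-argument. So the two proofs carry the same content; yours makes the scaling-condition step explicit where the paper leaves it implicit, at the cost of the (correctly handled) subsequence bookkeeping.
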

\begin{proof}
We can assume without any loss that $ (X,d)$ is totally bounded. If $ \Scl_H   X=0$  the inequality obviously holds, thus consider a positive number $ \alpha< \Scl_H X$. For $\delta >0$ small enough,   $\mathcal{H}_\delta^{\scl_\alpha }(X) > 1$. Also there exists a $ \delta$-cover  $( B_j ) _{ 1 \le j \le\mathcal{N}_\delta (X) }$. It verifies:
\[  1 < \sum_{1 \le j \le\mathcal{N}_\delta (F) }\scl_\alpha  ( \vert B_j \vert )  = \mathcal{N}_\delta (X) \cdot \scl_\alpha ( \delta) \; .  \]
From there, it holds $ \underline{\Scl}_B X  \ge \alpha $. We conclude by taking  $\alpha $  arbitrarily close to $\Scl_H X$.
\end{proof}

We have compared Hausdorff and packing scales with their corresponding box scales. It remains to compare each other with the following:  
\begin{proposition}
\label{Hausdorff paquet}
Let $ (X,d)$ be a metric space and $ \Scl$ a scaling. It holds: 
\[ \Scl_H X \le \Scl_P X \; . \]
\end{proposition}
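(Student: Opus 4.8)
The plan is to deduce the inequality purely from facts already established, mimicking the classical situation in dimension theory where the packing dimension is manufactured from the upper box dimension precisely so as to dominate the Hausdorff dimension. The two ingredients are the countable stability of the Hausdorff scale (\cref{countstab}) and the comparison $\Psi_H \le \underline{\Psi}_B$ valid on any metric space (\cref{Falconer hausdorff boite}); the point is to apply the latter not to $X$ itself but to the members of an arbitrary countable cover, and then to reassemble via countable stability.

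Concretely, I would fix a countable cover $(E_n)_{n\ge 1}$ of $X$, each $E_n$ carrying the restriction of $d$. First, \cref{countstab} gives $\Psi_H X = \sup_{n\ge 1}\Psi_H E_n$. Second, \cref{Falconer hausdorff boite} applied to each $E_n$ gives $\Psi_H E_n \le \underline{\Psi}_B E_n$, and the elementary inequality $\underline{\Psi}_B E_n \le \overline{\Psi}_B E_n$ (immediate from the definitions, using \cref{ineg cons} with constant $1$ to get $\mathcal{N}_\epsilon(E_n)\,\psi_\alpha(\epsilon)\le \mathcal{N}_\epsilon(E_n)\,\psi_\beta(\epsilon)$ near $0$ for $\alpha>\beta$) upgrades this to $\Psi_H E_n \le \overline{\Psi}_B E_n$ for every $n$. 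Combining these, $\Psi_H X = \sup_n \Psi_H E_n \le \sup_n \overline{\Psi}_B E_n$. Since $(E_n)_{n\ge1}$ was an arbitrary countable cover, taking the infimum over all such covers and recalling \cref{packing l box} yields $\Psi_H X \le \Psi_P X$.

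I do not anticipate a genuine obstacle: the only point deserving a moment's attention is that both \cref{countstab} and \cref{Falconer hausdorff boite} are stated for general metric spaces (resp.\ subsets), so they apply verbatim to the pieces $E_n$ equipped with their induced metric, and the reduction requires nothing beyond that. A more self-contained but heavier alternative would be to use the packing-measure characterisation of $\Psi_P$ from \cref{car packing} and prove directly that $\mathcal{H}^{\psi_\alpha}(X) \le \mathcal{P}^{\psi_\alpha}(X)$ through a Vitali-type ($5r$-)covering argument; the route above is preferable precisely because it introduces no new covering lemma and reuses only results already proved.
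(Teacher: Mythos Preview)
Your proposal is correct and follows essentially the same route as the paper: both use countable stability (\cref{countstab}) to write $\Psi_H X = \sup_{n\ge 1}\Psi_H E_n$ for any countable cover, apply \cref{Falconer hausdorff boite} together with $\underline{\Psi}_B E_n \le \overline{\Psi}_B E_n$ on each piece, and then take the infimum over covers to reach $\Psi_P X$. The only cosmetic difference is that the paper packages countable stability as $\Psi_H X = \inf_{\text{covers}}\sup_n \Psi_H E_n$ before comparing, which is equivalent to what you do.
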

\begin{proof}
By \cref{countstab}, Hausdorff scale is $\sigma$-stable: 
\[\Scl_H X  = \inf_{ \bigcup_{n\ge 1 } E_n = X }  \sup_{n \ge 1 } \Scl_H E_n  \; ,  \]
where the infimum is taken over countable coverings of $X$. 
Moreover, by \cref{Falconer hausdorff boite}, we have: 
\[ \Scl_H E \le \underline{\Scl}_B E \le \overline{\Scl}_B E \; ,  \] 
for any subset $ E $ of $X$. 
It follows then:
\[\Scl_H X  \le  \inf_{ \bigcup_{n\ge 1 } E_n = X }  \sup_{n \ge 1 } \overline{\Scl}_B E_n = \Scl_P X  \; .   \]
\end{proof}
For the sake of completeness we will resume:
\begin{proof}[Proof of \cref{resultsA}]
Let $ (X,d) $ be a metric space and $ \Scl$ a scaling.
By \cref{Falconer hausdorff boite},  \cref{Hausdorff paquet} and \cref{Falconer paquet boite},  it holds respectively: 
\[ \Scl_H X \le \underline \Scl_B X , \quad \Scl_H X \le \Scl_P X \qand  \Scl_H X \le \overline \Scl_B X   \; . \] 
Now since $ \underline \Scl_B X \le \overline \Scl_B X $ obviously holds, we deduce the desired result: 
\[  \Scl_H X \le \Scl_P X  \le \overline \Scl_B X  \qand  \Scl_H X \le \underline \Scl_B X  \le \overline \Scl_B X \; .   \]
\end{proof}
\section{Scales of measures} \label{lieu meas scl}

 In this section we recall the different versions of scales of measures we introduced and show the inequalities and equalities comparing them. In particular, we provide proofs of \cref{resultsB} and \cref{resultsC}. They generalize known results from dimension theory to any scaling and moreover bring new comparisons (see \cref{lem quant boite rep au pb}) between quantization and box scales that were not shown yet for even the case of dimension. 

\subsection{Hausdorff, packing and local scales of measures \label{proof thm B}}

Let us recall the definition of local scales.
Let $ \mu$ be a Borel measure on a metric space $(X,d)$ and $ \Scl $ a scaling.  The \emph{lower and upper scales} of $\mu$ are the functions that map a point $x \in X$ to:
\[\underline{\Scl}_{\loc}  \mu (x)  = \sup \left\{  \alpha > 0 : \frac{\mu \left(   B(x,  \epsilon ) \right) }{ \scl_\alpha  ( \epsilon )}   \xrightarrow[\epsilon \rightarrow 0]{}  0\right \} \] and \[ 
\overline{\Scl}_{\loc}  \mu (x) = \inf \left\{ \alpha > 0 :  
\frac{\mu \left(   B(x,  \epsilon ) \right) }{ \scl_\alpha  ( \epsilon )}   \xrightarrow[\epsilon \rightarrow 0]{}  +\infty
\right\} \;.\]

We shall compare local scales with the followings: 

\begin{definition}[Hausdorff scales of a measure] \label{haus meas def}
Let $ \Scl$ be a scaling and $ \mu$ a non-null Borel measure on  a metric space $(X,d)$. We define the
\emph{Hausdorff and $ * $-Hausdorff scales} of the measure $ \mu$ by: 
\[\Scl_H \mu = \inf_{ E \in \mathcal{B} } \left\{ \Scl_H E : \mu(  E ) >  0 \right\} \qand  \Scl^*_H \mu = \inf_{ E \in \mathcal{B} }  \left\{ \Scl_H E : \mu( X \backslash E ) = 0 \right\} \;,  \]
where $\mathcal{B}$ is the set of Borel subsets of $X$. 
\end{definition} 

\begin{definition}[Packing scales of a measure] \label{pack meas def}
Let $ \Scl$ be a scaling and $ \mu$ a non-null Borel measure on  a metric space $(X,d)$. We define the
\emph{ packing and $ * $-packing  scales} of $ \mu$ by: 
\[\Scl_P \mu = \inf_{ E \in \mathcal{B} } \left\{ \Scl_P E : \mu(  E ) >  0 \right\} \qand  \Scl^*_P \mu = \inf_{ E \in \mathcal{B} }  \left\{ \Scl_P E : \mu( X \backslash E ) = 0 \right\} \; .  \]
\end{definition}
\begin{remark}
In order to avoid excluding the null measure $0$, we set as a convention:
\[  \Scl_H 0 =  \Scl^*_H 0 =  \Scl_P 0 =  \Scl^*_P 0 = 0   \; .\]
\end{remark}
The lemma below will allow us to compare local scales with the other scales of measures. 
\begin{lemma}
\label{infess}
Let $\mu$ be a Borel  measure on $X$. 
Then for any Borel subset $F$ of $X$ such that $\mu(F) > 0$, the restriction $ \sigma $ of $\mu$ to $F$ verifies: 
\[ \infess \overline \Scl_{\loc} \mu \le \infess \overline \Scl_{\loc} \sigma \qand \infess \underline \Scl_{\loc} \mu \le \infess \underline \Scl_{\loc} \sigma.  \]
Moreover, if there exists $\alpha >0$  such that  $ F \subset \left\{ x \in X :   \overline \Scl_{\loc} \mu ( x)  > \alpha \right\}$,  it holds then:
\[   \infess \overline \Scl_{\loc}  \sigma \ge \alpha \; ,  \]
and similarly if $ F \subset \left\{ x \in X :   \underline \Scl_{\loc} \mu ( x)  > \alpha \right\}$, it holds:
\[    \infess \underline \Scl_{\loc}  \sigma \ge \alpha \; .\]
\end{lemma}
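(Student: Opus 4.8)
The plan is to reduce the whole statement to a pointwise comparison of the local scales of $\sigma$ and $\mu$ at each point, after which everything follows by elementary manipulation of essential infima. Throughout, $\sigma$ denotes the restriction $\sigma(\cdot)=\mu(\cdot\cap F)$, so that $\sigma\le\mu$ as measures and in particular $\sigma(B(x,\epsilon))=\mu(B(x,\epsilon)\cap F)\le\mu(B(x,\epsilon))$ for every $x\in X$ and every $\epsilon>0$. The first step I would carry out is the pointwise inequality: for every $x\in X$,
\[ \underline\Psi_{\loc}\mu(x)\le\underline\Psi_{\loc}\sigma(x)\qand\overline\Psi_{\loc}\mu(x)\le\overline\Psi_{\loc}\sigma(x). \]
Indeed, if $\alpha>0$ is such that $\sigma(B(x,\epsilon))/\psi_\alpha(\epsilon)\to+\infty$, then a fortiori $\mu(B(x,\epsilon))/\psi_\alpha(\epsilon)\to+\infty$ because $\mu(B(x,\epsilon))\ge\sigma(B(x,\epsilon))$; taking the infimum over such $\alpha$ gives the inequality on upper local scales. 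Symmetrically, if $\mu(B(x,\epsilon))/\psi_\alpha(\epsilon)\to 0$ then $\sigma(B(x,\epsilon))/\psi_\alpha(\epsilon)\to 0$; taking the supremum over such $\alpha$ gives the inequality on lower local scales. Note this uses only positivity of the $\psi_\alpha$ and no other property of the scaling.

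Next I would deduce the two unconditional inequalities. Since $\sigma\le\mu$, every $\mu$-null set is $\sigma$-null. Fix $\delta>0$; by definition of the essential infimum, $\overline\Psi_{\loc}\mu\ge\infess\overline\Psi_{\loc}\mu-\delta$ holds outside a $\mu$-null set, hence outside a $\sigma$-null set, and by the first step the same is then true of $\overline\Psi_{\loc}\sigma$. Therefore $\infess\overline\Psi_{\loc}\sigma\ge\infess\overline\Psi_{\loc}\mu-\delta$, where the left essential infimum is taken with respect to $\sigma$ and the right one with respect to $\mu$; letting $\delta\to 0$ gives the first displayed inequality of the lemma, and the verbatim same argument with $\underline\Psi_{\loc}$ in place of $\overline\Psi_{\loc}$ gives the second.

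Finally, for the two conditional inequalities I would use that $\sigma$ is concentrated on $F$, i.e. $\sigma(X\setminus F)=0$. If $F\subset\{x:\overline\Psi_{\loc}\mu(x)>\alpha\}$, then by the pointwise step $\overline\Psi_{\loc}\sigma(x)\ge\overline\Psi_{\loc}\mu(x)>\alpha$ for \emph{every} $x\in F$; since $\sigma$ gives full mass to $F$, this means $\overline\Psi_{\loc}\sigma>\alpha$ holds $\sigma$-a.e., hence $\infess\overline\Psi_{\loc}\sigma\ge\alpha$, and the case of $\underline\Psi_{\loc}$ is identical. I do not expect a genuine obstacle here: the mathematical content is just the pointwise monotonicity of local scales under $\sigma\le\mu$, and the only point requiring any care is bookkeeping — keeping straight which measure each $\infess$ is taken against, and checking that null sets pass correctly from $\mu$ to $\sigma$, which is immediate from $\sigma\le\mu$.
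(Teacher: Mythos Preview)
Your proposal is correct and follows essentially the same approach as the paper: both arguments rest on the pointwise inequalities $\underline\Psi_{\loc}\mu\le\underline\Psi_{\loc}\sigma$ and $\overline\Psi_{\loc}\mu\le\overline\Psi_{\loc}\sigma$, obtained from $\sigma(B(x,\epsilon))\le\mu(B(x,\epsilon))$, and then pass to essential infima using that $\mu$-null sets are $\sigma$-null and that $\sigma$ is concentrated on $F$. Your write-up is simply more explicit about the bookkeeping (which measure each $\infess$ is taken against) than the paper's terse version.
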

\begin{proof}
Consider a point $x \in X$, then for any $\epsilon > 0$,  one has   $ \sigma ( B(x,  \epsilon)) \le \mu( B(x,  \epsilon))$, thus by definition of local scales: 
\[\overline \Scl_{\loc}  \mu \le \overline \Scl_{\loc}  \sigma \quad  \text{and} \quad \underline \Scl_{\loc}  \mu \le \underline \Scl_{\loc}  \sigma \; .\]
Now if there exists $ \alpha > 0 $ such that   $ F \subset \left\{ x \in X :   \overline \Scl_{\loc} \mu ( x)  > \alpha \right\}$, as $ \overline{\Scl}_{\loc } \mu (x) \ge \alpha  $ for $ \mu$-almost every $x $ in $F$, it follows from the above inequality that $ \overline{\Scl}_{\loc} \sigma (x) \ge  \alpha $  for $ \mu$-almost every $x $ in $F$, and thus for $ \sigma$-almost every $x \in X $. It follows  $ \infess \overline \Scl_{\loc}  \sigma \ge \alpha$. And the same holds for lower local scales. 
\end{proof}
The following lemma corresponds to part of the results of \cref{resultsB}. We will prove this lemma later in \cref{lieu proof 335}. First, we will use it to prove \cref{resultsC} in \cref{lieu resultC}. This lemma states that the lower and upper local scales of a measure are, respectively, not greater than the Hausdorff and packing scales of the underlying space:
\begin{lemma}
\label{lemma dim loc}
Let $(X, d)$ be a  metric space and $\mu$ a Borel  measure on $X$. Let $\Scl$ be a scaling. Then it holds: 
\[ \supess \underline{\Scl}_{\loc}\mu  \le  \Scl_H X   \qand \supess \overline{\Scl}_{\loc}\mu  \le \Scl_P X   \; .\] 
\end{lemma}
Note that in the above we can replace $X$ by any of its subsets with total mass. This observation directly implies: 
\begin{corollary} \label{rem 35}Let $(X, d)$ be a  metric space and $\mu$ a Borel  measure on $X$. Let $\Scl$ be a scaling. It holds: 
\[ \supess \underline{\Scl}_{\loc}\mu  \le  \Scl_H ^*  \mu    \qand \supess \overline{\Scl}_{\loc}\mu  \le \Scl_P^*  \mu     \; . \]
\end{corollary}
To prove \cref{resultsC} we need to study quantization scales of measures. 
\subsection{Quantization and box scales of measures}
Let us first recall the definition of quantization scales.
Let $(X,d)$ be a metric space and  $\mu$ a Borel measure on $X$. Given $ \epsilon > 0$, the \emph{ $\epsilon$-quantization number} $\mathcal Q_\mu(\epsilon)$ of $ \mu$ is the  minimal cardinality of a set of points that is on average $\epsilon$-close to any point in $X$: 
\[ \mathcal{Q}_\mu(\epsilon) = \inf  \left\{ N \ge 0 : \exists 
\left\{ c_i \right\}_{i= 1, \dots, N } \subset X,  \int_X d(x,  \left\{ c_i\right\}_{  1 \le i \le N } ) d \mu (x) < \epsilon  \right\} \; . \]
Then \emph{lower and upper quantization scales} of $ \mu$  for a given scaling $ \Scl$ are defined by:
\[ \underline{\Scl}_Q \mu  =  \sup \left\{ \alpha > 0 : \mathcal{Q}_\mu ( \epsilon )   \cdot  \scl_\alpha ( \epsilon ) \xrightarrow[\epsilon \rightarrow 0]{}   + \infty  \right\}   \] and \[ \overline{\Scl}_Q \mu  =  \inf \left\{ \alpha > 0 : \mathcal{Q}_\mu ( \epsilon )    \cdot \scl_\alpha ( \epsilon ) \xrightarrow[\epsilon \rightarrow 0]{}  0  \right\}  \; . \]
Quantization scales of a measure are compared in \cref{resultsC} with box scales of measures:
\begin{definition}[Box scales of a measure] \label{bos scales measure} 
Let $ \Scl$ be a scaling and $ \mu$ a positive Borel measure on  a metric space $(X,d)$. 
 We define the \emph{lower  box scale} and the  \emph{$ * $-lower  box scale}  of $ \mu$ by: 
\[\underline{\Scl}_B \mu = \inf_{ E \in \mathcal{B} } \left\{ \underline{\Scl}_B E : \mu(  E ) >  0 \right\} \qand  \underline{\Scl}_B^*\mu = \inf_{ E \in \mathcal{B} }  \left\{ \underline{\Scl}_B E : \mu( X \backslash E ) = 0 \right\} \; ,   \]
where $\mathcal{B}$ is the set of Borel subsets of $X$. 
Similarly, we define the \emph{upper  box scale} and the  \emph{$ * $-upper  box scale}  of $ \mu$ by: 
\[\overline{\Scl}_B \mu = \inf_{ E \in \mathcal{B} } \left\{ \overline{\Scl}_B E : \mu(  E ) >  0 \right\} \qand  \overline{\Scl}_B^*\mu = \inf_{ E \in \mathcal{B} }  \left\{ \overline{\Scl}_B E : \mu( X \backslash E ) = 0 \right\} \;.  \]
\end{definition}
As for Hausdorff scales of measures, we choose that all box scales of the null measure are equal to $0$ as a convention. 
The following is straightforward:
 \begin{lemma}
\label{quantisation boite}
Let $(X, d)$ be a metric space and $\mu$ a  Borel measure on $X$.  Given  $\Scl$ a scaling, it holds:  
 \[ \underline{\Scl}_Q  \mu   \le \underline{\Scl}^*_{B}  \mu  \qand \overline{\Scl}_{Q} \mu\le \overline{\Scl}^*_{B} \mu \;  . \]
 \end{lemma}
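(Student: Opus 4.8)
The plan is to reduce both inequalities to the observation that, for a Borel set $E$ carrying the full mass of $\mu$, an optimal $\epsilon$-covering of $E$ is an admissible quantizer for $\mu$; this bounds the quantization number $\mathcal{Q}_\mu$ by the covering numbers $\mathcal{N}_\bullet(E)$, and \cref{lem inegalite} then upgrades the bound to the claimed comparison of scales.

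First I would fix a Borel set $E$ with $\mu(X\setminus E)=0$ and aim at $\underline\Psi_Q\mu\le\underline\Psi_B E$ and $\overline\Psi_Q\mu\le\overline\Psi_B E$; taking the infimum over all such $E$ and using \cref{bos scales measure} then gives the statement. One may assume $\mu$ is a finite non-zero measure, the null measure being trivial by the conventions. If $E$ is not totally bounded then $\mathcal{N}_\epsilon(E)=+\infty$ for all small $\epsilon$, so $\underline\Psi_B E=\overline\Psi_B E=+\infty$ and there is nothing to prove; hence assume $E$ totally bounded. For each $\epsilon>0$, choose a covering of $E$ by $N:=\mathcal{N}_\epsilon(E)$ balls $B(c_1,\epsilon),\dots,B(c_N,\epsilon)$ with centers $c_i\in X$. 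Every point of $E$ lies within $\epsilon$ of the finite set $\{c_i\}_{1\le i\le N}\subset X$, and the function $x\mapsto d(x,\{c_i\}_{1\le i\le N})$ is continuous (a finite minimum of $1$-Lipschitz functions), so, since $\mu(X\setminus E)=0$,
\[\int_X d\bigl(x,\{c_i\}_{1\le i\le N}\bigr)\,d\mu(x)=\int_E d\bigl(x,\{c_i\}_{1\le i\le N}\bigr)\,d\mu(x)\le\epsilon\cdot\mu(X)\; .\]
This shows $\mathcal{Q}_\mu\bigl(\mu(X)\cdot\epsilon\bigr)\le\mathcal{N}_\epsilon(E)$ for every $\epsilon>0$; in particular $f(\epsilon):=\mathcal{Q}_\mu(\mu(X)\cdot\epsilon)$ is a finite positive function with $f\le g$ near $0$, where $g(\epsilon):=\mathcal{N}_\epsilon(E)$.

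To finish I would apply \cref{lem inegalite} to $f$ and $g$ with the constant $C:=1/\mu(X)$, for which $f(C\epsilon)=\mathcal{Q}_\mu(\epsilon)$; this yields
\[\overline\Psi_Q\mu=\inf\{\alpha>0:\mathcal{Q}_\mu(\epsilon)\,\psi_\alpha(\epsilon)\to0\}\le\inf\{\alpha>0:\mathcal{N}_\epsilon(E)\,\psi_\alpha(\epsilon)\to0\}=\overline\Psi_B E\; ,\]
and, by the $\sup$/$+\infty$ half of \cref{lem inegalite}, likewise $\underline\Psi_Q\mu\le\underline\Psi_B E$. Taking the infimum over all Borel $E$ with $\mu(X\setminus E)=0$ then gives $\overline\Psi_Q\mu\le\overline\Psi^*_B\mu$ and $\underline\Psi_Q\mu\le\underline\Psi^*_B\mu$, as desired.

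As the name of the lemma indicates, this is straightforward and no genuine difficulty arises; the two points deserving attention are the reduction to totally bounded $E$ (which makes both $\mathcal{N}_\epsilon(E)$ and $\mathcal{Q}_\mu$ finite, hence $f$ a legitimate input to \cref{lem inegalite}) and the multiplicative constant $\mu(X)$, which \cref{lem inegalite} is precisely designed to absorb. The underlying mechanism is simply that an optimal $\epsilon$-net of a full-measure set is a quantizer of cost at most $\epsilon\cdot\mu(X)$, so that, up to this constant, $\mathcal{Q}_\mu$ is dominated by every $\mathcal{N}_\bullet(E)$ with $\mu(X\setminus E)=0$, and all the scales involved are monotone under such domination.
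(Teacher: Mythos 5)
Your proof is correct and follows essentially the same route as the paper: cover a full-measure Borel set $E$ by $\mathcal{N}_\epsilon(E)$ balls, observe that the centers form an admissible quantizer, deduce $\mathcal{Q}_\mu \le \mathcal{N}_\bullet(E)$ up to a constant, and conclude via \cref{lem inegalite}. Your explicit tracking of the factor $\mu(X)$ (absorbed by the constant $C$ in \cref{lem inegalite}) is in fact slightly more careful than the paper, which writes the bound as $\le \epsilon$ and thus implicitly treats $\mu$ as a probability measure.
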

 \begin{proof} We can assume without loss of generality that $ \overline \Scl_B^* \mu $  and $ \underline \Scl_B^* \mu $ are finite.
Let $E$ be a Borel set with total mass such that $ \overline{\Scl}_B E $ is finite, then $ E $ is totally bounded by \cref{fact basic pties}. Now for $\epsilon >0$, consider a minimal covering by $\epsilon$-balls centered at some points $ x_1,  ...,  x_N$ in $ E$.
Since $ \mu (X \backslash E) = 0$, it comes: 
 \[  \int_X d( x,  \left\{ x_i \right\} _{ 1 \le i \le N}) d \mu(x) = \int_E d( x,  \left\{ x_i \right\} _{ 1 \le i \le N}) d \mu(x)  \le  \epsilon < 2 \epsilon \; .  \]
It follows that $ \mathcal{Q}_\mu( 2 \epsilon ) \le \mathcal{N}_\epsilon (E) $, and thus by  \cref{lem inegalite}, we obtain: 
  \[ \underline{\Scl}_Q  \mu   \le \underline{\Scl}_{B} E  \qand  \overline{\Scl}_{Q} \mu \le \overline{\Scl}_{B} E \; .\]
  Since this holds true for any Borel set $E$ with total mass, the desired results come. 
\end{proof}
The following lemma will allow us to compare quantization scales with box scales.  
\begin{lemma}\label{lemer}
Let  $ \mu$  be a Borel measure on $(X,d) $ such that $ \mathcal{Q}_\mu ( \epsilon )  < + \infty $  for any $ \epsilon > 0$. 
Let us fix $ \epsilon > 0 $ and an integer $ N \ge \mathcal{Q}_\mu(\epsilon) $. Thus consider $ x_1, \dots,  x_N \in X $ such that:
\[\int_X  d( x,  \left\{ x_i \right\}_{ 1 \le i \le N} ) d \mu  (x) <  \epsilon  \; . \]
For any $r >0$, with  $E_r := \bigcup_{i = 1}^N  B(x_i, r)$, it holds:  
\[  \mu ( X \backslash E_r)  <   \frac{\epsilon}{r} \; . \]
\end{lemma}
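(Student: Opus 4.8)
This is a Markov-type inequality argument. The plan is to observe that $E_r$ is exactly the set of points within distance $r$ of the finite set $\{x_i\}_{1\le i\le N}$, so its complement $X\setminus E_r$ consists precisely of those $x$ with $d(x,\{x_i\}_{1\le i\le N})\ge r$. On this complement the integrand $d(x,\{x_i\}_{1\le i\le N})$ is bounded below by $r$, so
\[ \int_X d(x,\{x_i\}_{1\le i\le N})\, d\mu(x) \ge \int_{X\setminus E_r} d(x,\{x_i\}_{1\le i\le N})\, d\mu(x) \ge r\cdot \mu(X\setminus E_r). \]
Combining this with the hypothesis $\int_X d(x,\{x_i\}_{1\le i\le N})\, d\mu(x)\le \epsilon$ gives $r\cdot\mu(X\setminus E_r)\le\epsilon$, hence $\mu(X\setminus E_r)\le\epsilon/r$, which is the claim.

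First I would spell out the set identity: $x\in E_r$ iff $d(x,x_i)<r$ for some $i$, i.e. iff $d(x,\{x_i\}_{1\le i\le N})<r$; therefore $x\in X\setminus E_r$ iff $d(x,\{x_i\}_{1\le i\le N})\ge r$. Then I would restrict the integral over $X$ to the sub-integral over $X\setminus E_r$ (valid since the integrand is non-negative), apply the pointwise lower bound $d(x,\{x_i\}_{1\le i\le N})\ge r$ on that region, and factor $r$ out. The chain of inequalities above then closes the argument in one line.

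There is essentially no obstacle here; it is a direct Chebyshev/Markov estimate. The only minor points worth a word are measurability — $E_r$ is a finite union of open balls, hence Borel, so $\mu(X\setminus E_r)$ is well-defined — and the finiteness of $\mathcal{Q}_\mu(\epsilon)$, which is assumed precisely so that such points $x_1,\dots,x_N$ exist and the displayed integral inequality can be invoked.
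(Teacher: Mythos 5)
Your proof is correct and is essentially identical to the paper's: both observe that $d(x,\{x_i\}_{1\le i\le N})\ge r$ on $X\setminus E_r$, bound the integral from below by $r\cdot\mu(X\setminus E_r)$, and divide by $r$. Your write-up is actually slightly more careful (the paper's phrase ``distance at most $r$'' for the complement is a slip for ``at least $r$''), and the remarks on measurability and on why the points $x_1,\dots,x_N$ exist are fine but optional.
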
 
\begin{proof}
Since $X \backslash E_r $, the complement of $E_r$ in $X$ is the set of points with distance at most $r$  from the set $ \left\{ x_1, \dots, x_n \right\}$, it holds: 
\[   r \cdot \mu (X \backslash  E_r ) \le \int_{X \backslash E_r}  d( x,  \left\{ x_i \right\}_{ 1 \le i \le N} ) d \mu  (x)  < \epsilon \; , \]
which gives the desired result by dividing both sides by $r$. 
\end{proof}
The following result exhibits the relationship between quantization scales and box scales. As far as we know, this result has not yet been proved, even for the specific case of dimension.  It is a key element in the answer to \cref{prob Berger}. 
\begin{theorem}
\label{lem quant boite rep au pb}
Let $ \mu$ be a non null Borel measure on a metric space $(X,d)$. For any scaling $ \Scl$, there exists a Borel set $F$ with positive mass arbitrarily close to $ \mu (X) $ such that:  
\[\underline{\Scl}_{B}  F    \le  \underline{\Scl}_Q  \mu    \qand \overline{\Scl}_{B}  F   \le   \overline{\Scl}_{Q} \mu \; . \]
Consequently, it holds: 
 \[ \underline{\Scl}_{B}  \mu   \le  \underline{\Scl}_Q  \mu   \quad \text{and} \quad \overline{\Scl}_{B}  \mu   \le   \overline{\Scl}_{Q} \mu \; . \]
 \end{theorem}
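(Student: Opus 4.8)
The plan is to fix a scaling $\Psi$ and construct the Borel set $F$ directly from the quantization data, using \cref{lemer} and the Borel--Cantelli lemma. First I would reduce to the case where $\underline\Psi_Q\mu$ and $\overline\Psi_Q\mu$ are finite (otherwise there is nothing to prove). Pick a sequence $\epsilon_n \downarrow 0$ with $\log\epsilon_{n+1}\sim\log\epsilon_n$ — for instance $\epsilon_n = 2^{-n}$ — so that \cref{contdiscr} applies and all four scales can be read off along $(\epsilon_n)$. For each $n$, let $N_n = \mathcal{Q}_\mu(\epsilon_n)$ and choose centers $x_1^{(n)},\dots,x_{N_n}^{(n)}$ realizing the quantization. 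Apply \cref{lemer} with $r = r_n$ (to be chosen, e.g. $r_n = \sqrt{\epsilon_n}$ or $r_n = \epsilon_n\cdot n^2$, so that $\epsilon_n/r_n$ is summable and $\log r_n\sim\log\epsilon_n$): setting $E_n := \bigcup_{i=1}^{N_n} B(x_i^{(n)}, r_n)$ we get $\mu(X\setminus E_n)\le \epsilon_n/r_n$, a summable sequence.

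Next I would apply Borel--Cantelli to the sets $X\setminus E_n$: since $\sum_n \mu(X\setminus E_n) < \infty$, the set $F := \liminf_n E_n = \bigcup_{m\ge 1}\bigcap_{n\ge m} E_n$ has full mass, $\mu(X\setminus F)=0$; in particular $F$ has positive mass. The point of this construction is that $F$ is covered \emph{efficiently} at every scale $r_n$: indeed $F \subset \bigcap_{n\ge m}E_n$ for some $m$ on a set of positive measure, or more simply, for the subset $F_m := \bigcap_{n\ge m}E_n$ (which has $\mu(F_m)\to\mu(X)>0$, so some $F_m$ has positive mass) we have, for every $n\ge m$, a covering of $F_m$ by $N_n = \mathcal{Q}_\mu(\epsilon_n)$ balls of radius $r_n$. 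Hence $\mathcal{N}_{r_n}(F_m)\le \mathcal{Q}_\mu(\epsilon_n)$ for all $n\ge m$. Taking this $F := F_m$, I then feed the inequality $\mathcal{N}_{r_n}(F)\le \mathcal{Q}_\mu(\epsilon_n)$ into the sequential characterization of box scales (\cref{packing numb} together with \cref{contdiscr}), comparing $\psi_\alpha(r_n)$ against $\psi_\alpha(\epsilon_n)$: since $\log r_n\sim\log\epsilon_n$, \cref{contdiscr} lets me replace $r_n$ by $\epsilon_n$ in the box-scale limits, and \cref{lem inegalite} (or directly \cref{ineg cons}) absorbs the constant relating the two. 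This yields $\underline\Psi_B F\le\underline\Psi_Q\mu$ and $\overline\Psi_B F\le\overline\Psi_Q\mu$. The final ``in particular'' statement is then immediate from \cref{bos scales measure}, since $F$ has positive mass so $\underline\Psi_B\mu\le\underline\Psi_B F$ and likewise for the upper scale.

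The main obstacle I anticipate is the bookkeeping of scales: I must choose $r_n$ so that simultaneously (i) $\epsilon_n/r_n$ is summable (needed for Borel--Cantelli), (ii) $\log r_n\sim\log\epsilon_n$ (needed so that \cref{contdiscr} identifies the box scales along $r_n$ with those along $\epsilon_n$), and (iii) the discretization step $\log r_{n+1}\sim\log r_n$ holds. With $\epsilon_n = 2^{-n}$ and $r_n = \epsilon_n n^2 = 2^{-n}n^2$ all three hold: $\epsilon_n/r_n = n^{-2}$ is summable, and $\log r_n = -n\log 2 + 2\log n \sim -n\log 2 \sim \log\epsilon_n$. A secondary subtlety is making sure the set $F$ we extract genuinely has positive mass (not merely that $\liminf E_n$ is full): this is handled by noting $\mu\big(\bigcap_{n\ge m}E_n\big)\ge \mu(X)-\sum_{n\ge m}\epsilon_n/r_n \to \mu(X)>0$, so for $m$ large enough $F=\bigcap_{n\ge m}E_n$ works, and one should record that this $F$ is Borel (a countable intersection of finite unions of balls). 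Everything else is routine once \cref{lemer}, Borel--Cantelli, and \cref{contdiscr} are in place.
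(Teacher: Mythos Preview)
Your proposal is correct and follows essentially the same route as the paper's proof: the paper also chooses $\epsilon_n=e^{-n}$ (you use $2^{-n}$, an immaterial change), $r_n=n^2\epsilon_n$, applies \cref{lemer} to get $\mu(X\setminus E_n)\le n^{-2}$, invokes Borel--Cantelli to find an $m$ with $F:=\bigcap_{n\ge m}E_n$ of positive mass, and concludes via $\mathcal N_{r_n}(F)\le\mathcal Q_\mu(\epsilon_n)$ together with \cref{contdiscr} and \cref{lem inegalite}. One small phrasing issue: the correct reduction is to the case where $\mathcal Q_\mu(\epsilon)<\infty$ for all small $\epsilon$ (the paper takes $F=X$ otherwise), rather than to both quantization scales being finite---though, as you can check, finiteness of $\underline\Psi_Q\mu$ alone already forces $\mathcal Q_\mu(\epsilon)<\infty$ eventually, so your reduction is ultimately sound.
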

\begin{proof} 
If  $\mathcal Q_\mu  ( \epsilon ) $ is not finite for some $ \epsilon > 0 $ we have $ \underline{\Scl}_Q \mu = + \infty = \overline{\Scl}_Q \mu$ and nothing to prove. 
So we can assume that $ \mathcal{Q}_\mu ( \epsilon) < + \infty$ for every $ \epsilon > 0 $. We consider two sequences of positive numbers  $ \epsilon_n  := \exp( -n) $ and   ${ r_n  := n^2  \cdot \exp(-n)  = n^2 \cdot  \epsilon_n }$ for $n \ge 1$. 
Then for each $ n \ge 1$, consider a finite set $ C_n \subset X$ with minimal cardinality such that: 
\begin{equation*} \label{epsn quant}
\int_X d (x, C_n) d\mu(x)  < \epsilon_n \; . 
\end{equation*} 
By \cref{lemer}, it holds: 
\begin{equation*} \label{meas E}
\mu(  X \backslash E_n) <   \frac{\epsilon_n}{r_n}  = \frac{1}{n^2}  \quad  \text{ with }  E_n := \bigcup_{ x \in C_n} B ( x , r_n ) \; .  
\end{equation*} 
By Borell-Cantelli lemma, it holds: 
\[ \mu ( \liminf E_n ) = \mu \left( \bigcup_{m \ge 1 } \bigcap_{ n \ge m} E_n \right) = \mu (X) > 0 \; .  \] 
Hence, for sufficiently large $m$, we have $ \mu (F) > 0 $, where $ F := \bigcap_{n \ge m } E_n$. Moreover if $ \mu $ is finite by taking $ m $ even larger we can have $ \mu ( X \backslash F) $ arbitrarily small;  and if $ \mu$ is infinite, we can take $ \mu (F) $ arbitrarily large. 
We now fix such a value of $ m $. 
Observe that $ F \subset E_n $ for every $ n \ge m $. As $ E_n$ is a union of balls of radius $ r_n$, we have trivially: 
\[ \forall n \ge m : \mathcal{N}_{r_n} ( F) \le \Card C_n = \mathcal{Q}_\mu ( \epsilon_n)  \; . \]
Finally by \cref{contdiscr}, and the fact that $ \log r_n \sim \log \epsilon_n \sim \log r_{n+1} $, we obtain: 
\begin{align*}
\underline{\Scl}_B F &= \sup \lbrace \alpha > 0 : \mathcal{N}_{r_n} ( F) \cdot \scl_\alpha ( r_n) \xrightarrow[n \to + \infty]{} + \infty \rbrace \\
&\le  \sup \lbrace \alpha > 0 : \mathcal{Q}_{\mu} ( \epsilon_n) \cdot \scl_\alpha ( \epsilon_n) \xrightarrow[n \to + \infty]{} + \infty \rbrace \\
&= \underline{\Scl}_Q  \mu \; . 
\end{align*} 

Similarly, we can prove $\overline{\Scl}_B F \le \overline{\Scl}_Q \mu$. Then the last two inequalities stated in \cref{lem quant boite rep au pb} follow directly from the definitions of $ \underline{\Scl}_B \mu $ and $ \overline{\Scl}_B \mu $. 
\end{proof}
\subsection{Comparison between local and global scales of measures \label{sec comp q loc} and proof of Theorem C}
To prove \cref{resultsC} we will need: 
\begin{theorem}
\label{rep au pb}
Let $(X,d)$ be a separable metric space and $ \mu$ a finite  Borel measure on $X$. Let $ \Scl$ be a  scaling. It holds: 
\[ \supess \underline{\Scl}_{\loc} \mu  \le \underline \Scl _ Q \mu\quad  \text{and} \quad  \supess \overline{\Scl}_{\loc} \mu  \le \overline \Scl _ Q \mu \; . \]
\end{theorem}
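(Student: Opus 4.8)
The plan is to prove the first inequality $\supess \underline{\Psi}_{\loc}\mu \le \underline{\Psi}_Q\mu$ in detail; the second inequality $\supess \overline{\Psi}_{\loc}\mu \le \overline{\Psi}_Q\mu$ follows along identical lines after swapping $\sup$ for $\inf$ and $+\infty$ for $0$ in the defining conditions. Fix a real number $\alpha$ strictly below $\supess \underline{\Psi}_{\loc}\mu$; it suffices to show $\alpha \le \underline{\Psi}_Q\mu$, i.e.\ that $\mathcal{Q}_\mu(\epsilon)\cdot\psi_\alpha(\epsilon)\to+\infty$ as $\epsilon\to0$. By definition of the essential supremum, the set $A:=\{x\in X : \underline{\Psi}_{\loc}\mu(x) > \alpha\}$ has positive $\mu$-measure, and on $A$ we have $\mu(B(x,\epsilon))/\psi_{\alpha}(\epsilon)\to0$ for every $x$. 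It will be convenient to first pick $\alpha' \in (\alpha, \supess\underline\Psi_{\loc}\mu)$ and work with the set $A' := \{x : \underline\Psi_{\loc}\mu(x) > \alpha'\}$, so that on $A'$ one even has $\mu(B(x,\epsilon))/\psi_{\alpha'}(\epsilon)\to 0$, giving room to lose a factor via \cref{ineg cons}.

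The key step is a covering/counting argument at a fixed scale $\epsilon$. Suppose $\{c_1,\dots,c_N\}\subset X$ realizes $\int_X d(x,\{c_i\})\,d\mu(x)\le\epsilon$ with $N=\mathcal{Q}_\mu(\epsilon)$. Applying \cref{lemer} with $r = \sqrt{\epsilon}$ (or more generally $r=\epsilon^{\theta}$ for a suitable $\theta\in(0,1)$ chosen so that $r^{\lambda}\le\epsilon$ holds for $\lambda$ close to $1$ — this is where the scaling hypothesis $(*)$ enters), the set $E_r=\bigcup_{i=1}^N B(c_i,r)$ satisfies $\mu(X\setminus E_r)\le\epsilon/r=\sqrt{\epsilon}$. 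Hence for $\epsilon$ small, $\mu(E_r\cap A') \ge \mu(A') - \sqrt{\epsilon} \ge \tfrac12\mu(A') > 0$, so at least one ball $B(c_i,r)$ carries mass $\ge \tfrac{\mu(A')}{2N}$ and meets $A'$. Pick a point $x_i\in B(c_i,r)\cap A'$; then $B(c_i,r)\subset B(x_i,2r)$ and consequently $\mu(B(x_i,2r)) \ge \tfrac{\mu(A')}{2N}$. Since $x_i\in A'$, the local condition gives $\mu(B(x_i,2r))\le\psi_{\alpha'}(2r)$ once $r$ is small enough. Combining, $N \ge \tfrac{\mu(A')}{2\,\psi_{\alpha'}(2r)}$, that is $\mathcal{Q}_\mu(\epsilon)\cdot\psi_{\alpha'}(2r) \ge \tfrac{\mu(A')}{2}$.

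It then remains to convert this lower bound at scale $r=\epsilon^\theta$ into the desired divergence at scale $\epsilon$ against $\psi_\alpha$. Since $\alpha<\alpha'$, Definition \ref{scaling} gives, for $\lambda>1$ close to $1$, that $\psi_{\alpha'}(2r) = o(\psi_\alpha((2r)^\lambda))$; choosing $\theta$ so that $(2r)^\lambda = (2\epsilon^\theta)^\lambda \le \epsilon$ for small $\epsilon$ (possible since we may take $\theta$ close to $1$ and $\lambda$ close to $1$ with $\theta\lambda>1$... wait — we need $\theta\lambda \ge 1$, so pick $\theta\in(1/\lambda,1)$), monotonicity of $\psi_\alpha$ yields $\psi_\alpha((2r)^\lambda)\le\psi_\alpha(\epsilon)$. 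Therefore
\[
\mathcal{Q}_\mu(\epsilon)\cdot\psi_\alpha(\epsilon) \ge \mathcal{Q}_\mu(\epsilon)\cdot\psi_\alpha((2r)^\lambda) \ge \frac{\mathcal{Q}_\mu(\epsilon)\cdot\psi_{\alpha'}(2r)}{o(1)} \ge \frac{\mu(A')/2}{o(1)} \xrightarrow[\epsilon\to0]{} +\infty .
\]
Hence $\underline\Psi_Q\mu\ge\alpha$, and letting $\alpha\uparrow\supess\underline\Psi_{\loc}\mu$ finishes the lower case. For the upper case, run the same argument but start from $\alpha > \supess\overline\Psi_{\loc}\mu$: now $\overline\Psi_{\loc}\mu(x)<\alpha$ for $\mu$-a.e.\ $x$, one picks $\alpha'\in(\supess\overline\Psi_{\loc}\mu,\alpha)$, uses $\mu(B(x,\epsilon))/\psi_{\alpha'}(\epsilon)\to+\infty$ to get an \emph{upper} bound $N \le \text{(something)}/\psi_{\alpha'}(\cdot)$ via a symmetric covering estimate, and concludes $\mathcal Q_\mu(\epsilon)\psi_\alpha(\epsilon)\to 0$.

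The main obstacle I anticipate is the bookkeeping in the scale-change step: one must choose the auxiliary exponents ($\alpha'$ between $\alpha$ and $\supess$, the radius exponent $\theta$, and the scaling parameter $\lambda$) in a mutually compatible way so that simultaneously (i) \cref{lemer} gives a summable/vanishing escaping mass $\epsilon/r$, (ii) the inflated radius $2r$ stays in the regime where the local estimate on $A'$ is valid, and (iii) $(2r)^\lambda\le\epsilon$ so that $\psi_\alpha$ at the inflated-then-powered radius is dominated by $\psi_\alpha(\epsilon)$. A secondary subtlety, requiring separability of $X$, is ensuring that $A'=\{x:\underline\Psi_{\loc}\mu(x)>\alpha'\}$ is measurable (so that ``$\mu(A')>0$'' makes sense) — this is standard since $x\mapsto\mu(B(x,\epsilon))$ is measurable and the local scale is built from a countable limiting procedure, but it should be noted. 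Everything else is a routine application of \cref{lemer}, \cref{ineg cons}, and the sequential characterization \cref{contdiscr} if one prefers to discretize $\epsilon=e^{-n}$.
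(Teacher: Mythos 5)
Your argument for the first inequality, $\supess \underline{\Psi}_{\loc}\mu \le \underline{\Psi}_Q\mu$, is essentially sound and is a genuinely more direct route than the paper's (which deduces it from \cref{lem quant boite rep au pb}, \cref{lemma dim loc} and \cref{infess} rather than counting at a fixed scale). It has one repairable defect: the step ``since $x_i\in A'$, the local condition gives $\mu(B(x_i,2r))\le\psi_{\alpha'}(2r)$ once $r$ is small enough'' is not legitimate as written, because the threshold below which this inequality holds depends on the point, and $x_i$ is chosen only after $\epsilon$ (hence $r$) is fixed. You must first uniformize: write $A'=\bigcup_{m\ge 1}A'_m$ with $A'_m=\lbrace x\in A' : \mu(B(x,\rho))\le\psi_{\alpha'}(\rho)\ \text{for all}\ \rho\le 1/m\rbrace$, pick $m$ with $\mu(A'_m)>0$, and run the counting argument with $A'_m$ in place of $A'$. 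This is exactly the device used at the start of the proof of \cref{lemma dim loc}. With that insertion, and your choice $\theta\in(1/\lambda,1)$, the chain giving $\mathcal{Q}_\mu(\epsilon)\cdot\psi_\alpha(\epsilon)\to+\infty$ is correct.

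The second inequality is where the proposal genuinely fails, on two counts. First, the direction is reversed: starting from $\alpha>\supess\overline{\Psi}_{\loc}\mu$ and concluding $\mathcal{Q}_\mu(\epsilon)\psi_\alpha(\epsilon)\to 0$ would prove $\overline{\Psi}_Q\mu\le\supess\overline{\Psi}_{\loc}\mu$, i.e.\ the \emph{converse} of inequality $(g)$, which is false in general (cf.\ \cref{fig:my_label}). What must be shown is that for $\alpha<\supess\overline{\Psi}_{\loc}\mu$ the product $\mathcal{Q}_\mu(\epsilon)\psi_\alpha(\epsilon)$ does \emph{not} tend to $0$. Second, even when reoriented, the counting argument does not transfer ``along identical lines'': the hypothesis $\overline{\Psi}_{\loc}\mu(x)>\alpha'$ only yields $\liminf_{\rho\to0}\mu(B(x,\rho))/\psi_{\alpha'}(\rho)<\infty$, that is, an upper bound $\mu(B(x,\rho))\le C\,\psi_{\alpha'}(\rho)$ along a sequence of radii \emph{depending on $x$}. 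In your scheme the radius $2r=2\epsilon^{\theta}$ is dictated by the quantization scale before the point $x_i$ is located, so there is no way to arrange that $2r$ is one of those good radii; the quantifiers are circular. This is precisely why the paper routes the upper inequality through the packing scale: Vitali's covering \cref{lemvitali} lets one assign to each point its own good radius and still extract a disjoint subfamily (right-hand side of the proof of \cref{lemma dim loc}), and the passage from the packing and box scales of a positive-measure set to $\overline{\Psi}_Q\mu$ is then supplied by the Borel--Cantelli construction of \cref{lem quant boite rep au pb}. Some substitute for that machinery is required; a symmetric one-scale covering estimate will not produce it.
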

\begin{proof} 
If the lower (respectively upper) local scale of $ \mu$ is zero at almost every point then obviously it is not greater than the lower (respectively upper) quantization scale of $\mu$. Otherwise, consider $ \alpha, \beta > 0 $ such that: 
\begin{equation} \label{ineqsig0}
\alpha  < \supess \underline{\Scl}_{\loc} \mu  \qand \beta < \supess \overline{\Scl}_{\loc} \mu  \; . 
\end{equation} 
Note that $ E: = \left\{ x\in X : \underline{\Scl}_{\loc} \mu (x) > \alpha \qand  \overline{\Scl}_{\loc} \mu (x) > \beta \right\}$ has positive mass. 
Applying \cref{lem quant boite rep au pb} to the restriction of $ \mu $ to $E$ provides a Borel subset $F \subset E$ with $\mu( F)  > 0$ such that:
\[ \underline \Scl_B F \le   \underline \Scl_Q \mu \qand  \overline \Scl_B F \le   \overline \Scl_Q \mu  \; . \]
Yet by \cref{Falconer hausdorff boite} and \cref{Falconer paquet boite}, it holds respectively: 
\begin{equation} \label{ineqsig3}
\Scl_H F \le \underline \Scl_B F \qand \Scl_P F \le \overline \Scl_B F  \; . 
\end{equation}
Now, by setting $\sigma$ as the restriction of  $\mu$ to $F$, we obtain by \cref{infess} and \cref{ineqsig0}:
\begin{equation} \label{ineqsig1}
 \alpha \le \infess \underline \Scl_{\loc} \sigma   \qand   \beta \le \infess \overline \Scl_{\loc} \sigma  \; . 
\end{equation}
By  \cref{lemma dim loc}, it holds:
\begin{equation} \label{ineqsig2}
\infess \underline \Scl_{\loc}\sigma  \le \Scl_H F \qand   \infess \overline \Scl_{\loc}\sigma  \le \Scl_P F \; . 
\end{equation}
Finally, combining \cref{ineqsig3,ineqsig1,ineqsig2} gives: 
\[ \alpha \le \infess \underline \Scl_{ \loc} \sigma \le \Scl_H F \le  \underline \Scl_B F \le \underline \Scl_Q \mu  \; \]
and
\[\beta \le \infess \overline \Scl_{ \loc} \sigma \le \Scl_P F \le  \overline \Scl_B F \le \overline \Scl_Q \mu  \; . \]
Since this holds true for  any $ \alpha$ and $ \beta $ arbitrarily close to  $ \supess \underline{\Scl}_{\loc} \mu $  and  $ \supess \overline{\Scl}_{\loc} \mu $ we obtain the desired result. 
\end{proof}
We are now able to show: 
\begin{proof}[Proof of \cref{resultsC}] \label{lieu resultC}
By \cref{lem quant boite rep au pb}  and \cref{quantisation boite}, it holds:
\[ \underline{\Scl}_{B}  \mu   \le  \underline{\Scl}_Q  \mu   \le \underline{\Scl}^\star_{B}  \mu  \quad \text{and} \quad \overline{\Scl}_{B}  \mu   \le   \overline{\Scl}_{Q} \mu \le  \overline{\Scl}^\star_{B} \mu \; . \] 
By \cref{rep au pb} it holds: 
\[ \supess \underline{\Scl}_{\loc} \mu  \le \underline \Scl _ Q \mu\quad  \text{and} \quad  \supess \overline{\Scl}_{\loc} \mu  \le \overline \Scl _ Q \mu \; . \]
Thus it remains only to show:\begin{equation} \label{eq remaining}
\infess \underline{ \Scl}_{\loc} \mu \le \underline \Scl_B \mu \qand  \infess  \overline{ \Scl}_{\loc} \mu \le  \overline \Scl_B \mu \; .  
\end{equation}  
To prove this, consider a subset $ E \subset X$ with positive mass. Set $ \sigma $ as the restriction of $ \mu $ to $E$. By \cref{infess}, it holds:
\begin{equation} \label{qb1}
\infess  \underline{ \Scl}_{\loc} \mu \le  \infess  \underline{ \Scl}_{\loc} \sigma \qand  \infess  \overline{ \Scl}_{\loc} \mu \le  \infess \overline{ \Scl}_{\loc} \sigma  \; .   
\end{equation} 
By \cref{rep au pb} it holds:
\begin{equation} \label{qb2}
\supess \underline{\Scl}_{\loc} \sigma  \le \underline \Scl _ Q \sigma \quad  \text{and} \quad  \supess \overline{\Scl}_{\loc} \mu  \le \overline \Scl _ Q \sigma \; . 
\end{equation}
Moreover, by \cref{quantisation boite} we have:
\begin{equation} \label{qb3}
 \underline \Scl _ Q \sigma  \le \underline \Scl_B E \qand \overline \Scl _ Q \sigma \le \overline \Scl_B E  \; .
\end{equation}
Combining \cref{qb1,qb2,qb3} provides: 
\[ \infess \underline{ \Scl}_{\loc} \mu  \le \underline \Scl_B E \qand  \infess  \overline{ \Scl}_{\loc} \mu \le  \overline \Scl_B E    \; .   \] 
Taking the infima over such subsets $E\subset X$ with positive mass gives \cref{eq remaining}.  
\end{proof}

\subsection{Proof of \cref{resultsB}}
This subsection contains the proof of \cref{resultsB}, we recall its statement below. 
We will use Vitali covering lemma  to compare local scales with Hausdorff and packing scales. This lemma was first used for the dimensional  case by Tamashiro \cite{tamashiro1995dimensions}. 
\begin{lemma}[Vitali covering lemma]
\label{lemvitali}
Let $(X, d)$ be a separable metric space. Given $\delta >0 $,  $\mathcal{B}$ a family of open balls in $X$ with radii at most $\delta$  and $F $ the union of these balls.   There exists a countable set $ J$ and a $\delta$-pack $ (B(x_j, r_j) )_{j \in J} \subset\mathcal{B}$  of  $F$ such that: 
\[ F \subset \bigcup_j B(x_j, 5r_j) \; . \] 
\end{lemma}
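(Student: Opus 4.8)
The plan is to run the classical greedy selection behind the $5r$-covering lemma, but organized into dyadic generations of radii so that whenever a ball is discarded it is blocked by an already selected ball of at least half its radius; a single global maximal disjoint subfamily would not give this control, which is exactly what makes the factor $5$ appear.

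Concretely, I would first partition $\mathcal B$ into generations
\[ \mathcal B_n := \left\{ B \in \mathcal B : \delta\,2^{-n} < \vert B\vert \le \delta\,2^{-(n-1)} \right\}, \qquad n \ge 1, \]
which exhaust $\mathcal B$ since every ball of $\mathcal B$ has radius in $(0,\delta]$. Then I build subfamilies $\mathcal G_n$ by induction on $n$: assuming $\mathcal G_1,\dots,\mathcal G_{n-1}$ have been chosen, let $\mathcal G_n$ be a subfamily of pairwise disjoint balls of $\mathcal B_n$, each disjoint from every ball of $\mathcal G_1\cup\dots\cup\mathcal G_{n-1}$, maximal for inclusion among such subfamilies. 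Such a maximal family exists by Zorn's lemma, the union of a chain of pairwise disjoint subfamilies being again pairwise disjoint. Put $\mathcal G := \bigcup_{n\ge 1}\mathcal G_n$. By construction $\mathcal G$ is a family of pairwise disjoint open balls of radius at most $\delta$; since $X$ is separable, any pairwise disjoint family of nonempty open sets is countable (each such set meets a fixed countable dense set), so $\mathcal G$ may be indexed as $(B(x_j,r_j))_{j\in J}$ with $J$ countable, and this family is a $\delta$-pack contained in $\mathcal B$.

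It then remains to check the covering property. Fix $B=B(x,r)\in\mathcal B$; it lies in some $\mathcal B_n$, so $r>\delta\,2^{-n}$ and $r\le 2\delta\,2^{-n}$. If $B\in\mathcal G_n$ there is nothing to prove. Otherwise maximality of $\mathcal G_n$ forces $B$ to intersect some ball $B'=B(x',r')$ lying in $\mathcal G_1\cup\dots\cup\mathcal G_n$, say $B'\in\mathcal G_m$ with $m\le n$; then $r'>\delta\,2^{-m}\ge\delta\,2^{-n}$, hence $r<2r'$. Picking $z\in B\cap B'$, for every $y\in B$ the triangle inequality gives $d(y,x')\le d(y,x)+d(x,z)+d(z,x')<r+r+r'<2r'+2r'+r'=5r'$, so $B\subset B(x',5r')\subset\bigcup_{j\in J}B(x_j,5r_j)$. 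Taking the union over all $B\in\mathcal B$ yields $F\subset\bigcup_{j\in J}B(x_j,5r_j)$, as required.

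The argument is essentially routine, and there is no genuine obstacle; the only points needing a little care are the organization into dyadic generations (so that a discarded ball is always blocked by a ball of comparable, not possibly much smaller, radius), the appeal to Zorn's lemma to obtain a maximal pairwise disjoint subfamily at each stage, and the use of separability to ensure that $\mathcal G$ is countable.
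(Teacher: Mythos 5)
The paper does not prove this lemma -- it is stated as a classical result with a citation to Vitali -- so there is no in-paper argument to compare against; your proof is the standard dyadic-generation proof of the $5r$-covering lemma and it is correct, including the two points that actually need care here (organizing the greedy selection by generations so that a rejected ball meets a selected ball of radius larger than half its own, and invoking separability to make the disjoint family countable, which is what the hypothesis is for). Nothing to add.
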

For a proof of this version of the Vitali covering lemma, see  \cite{evans2018measure}[1.5.1, p. 35]. Despite they stated it for the Euclidean case, their proof adapts straightforwardly for separable metric spaces.   
We are now ready to prove: 
\begin{proof}[Proof of \cref{lemma dim loc}]\label{lieu proof 335}
\textsc{Proof of the first inequality:}
If $\supess \underline{\Scl}_{\loc}\mu =0$ or $ \Scl_H X = + \infty$,  it obviously holds $\supess \underline{\Scl}_{\loc}\mu \le \Scl_H X$. 
Otherwise, $ X $ is separable and consider a positive $ \alpha < \supess \underline{\Scl}_{\loc}\mu $. There exists $r_0 >0$ such that the set:
\[ A := \left\{ x \in X : \mu(B(x, r)) \le \scl_\alpha (r),  \ \forall r \in ( 0, r_0 )  \right\}  \; \]
has positive measure. 
Consider $\delta \le r_0$ and a  $\delta$-cover $( B_j )_{j \in J} $ of $A$. It holds:
\[ 0 < \mu(A) \le \sum_{j \in J } \mu( B_j) \le \sum_{j\in J } \scl_\alpha ( \vert   B_j \vert )  \;. \]
Taking the infimum over $ \delta$-covers provides: 
\[ 0 < \mu (A) \le  \mathcal{H}_\delta^{ \scl_\alpha} (A)  \; .  \]
Now as  $\delta$ goes to $0$ we obtain:
\[  0 < \mu (A) \le \mathcal{H}^{ \scl_\alpha} ( A)  \; . \]
It follows: 
\[ \Scl_H X  \ge \Scl_H A   \ge \alpha \; , \]
which allows us to conclude the proof of that first inequality. 

\textsc{Proof of the second inequality:}
If $\supess \overline{\Scl}_{\loc}\mu =0$ or $ \Scl_P X = + \infty$,  it obviously holds $ { \supess \overline{\Scl}_{\loc}\mu \le \Scl_P X}$. Otherwise, $ X $ is separable and consider a positive $ \alpha < \supess \overline{\Scl}_{\loc}\mu $. Put: 
 \[ F = \left\{ x \in X : \overline{\Scl}_{\loc}\mu(x) > \alpha \right\}. \]
 Let  $( F_N)_{ N \ge 1 } $ be a covering of $X$ by Borel subsets.  
Given $ 0 < \beta < \alpha$, by \cref{ineg cons},  there exists  $\delta_0 > 0 $ such that for any $r \le \delta_0$, it holds: 
\begin{equation} \label{const5}
\scl_\alpha ( 5 r) \le \scl_\beta (   r  )  \; . 
\end{equation}
Fix   $\delta \in ( 0,  \delta_0)$ and an integer $N \ge 1$. 
For each  $x$ in $F_N$, by  \cref{contdiscr} there exists a minimal integer $n(x)$ such that: 
\begin{equation} \label{minint}
\mu \left(  B( x, 5 r(x) ) \right)  \le \scl_\alpha( 5 r(x))  \quad \text{ where }  r(x) : = \exp ( - n(x)) \le \delta  \; . 
\end{equation}
We now set: 
 \[ \mathcal{F}  = \left\{ B( x,  r(x)) : x \in F_N \right\} \; .  \]
 Thus by Vitali covering \cref{lemvitali} there exists a countable subset  $J \subset F_N $
  such that   $\left(  B( x ,  r(x) ) \right)_{  x \in J }  $ is a  $\delta$-pack of $F_N$   and $\left(  B( x , 5   r(x) ) \right)_{  x \in J } $  is a covering of $ F_N$.
From there, by \cref{const5,minint} it holds:
\[  \mu(  F_N ) \le \sum_{x \in J} \mu ( B(x, 5 r(x) )) \le \sum_{x \in J} \scl_\alpha( 5  r(x) ) \le\sum_{x \in J} \scl_\beta( r(x)  ) \;. \]
Since this holds true for any $\delta$-pack, we have: 
\[ \mathcal{P}_{\delta}^{\scl_\beta}(F_N) \ge \mu(F_N) \; ,   \]
and then taking $\delta$ arbitrarily  close to  $0$ leads to:
\[\mathcal{P}_{0}^{\scl_\beta}(F_N) \ge \mu(F_N)   \; .  \]
Summing over $N \ge 1$ provides: 
\[    \sum_{N \ge 1} \mathcal{P}_{0}^{\scl_\beta}(F_N) \ge \sum_{N \ge 1}  \mu(F_N) \ge \mu(F) > 0 \; . \]
Recall that $(F_N)_{ N \ge 1}$ is an arbitrary covering by Borel sets of $F$, thus by definition of the packing measure and the latter equation, it holds: 
\[ \mathcal{P}^{\scl_\beta} ( F) \ge \mu(F) > 0 \;. \]
It holds then  $  \Scl_P F \ge \beta$ for any $ \beta < \alpha <  \supess \overline \Scl_\loc \mu$, which allows us to conclude the proof.
\end{proof}
We deduce then: 
\begin{proposition} \label{B first}
Let $(X, d) $ be a metric space and  $\mu$ a Borel measure on $X$,  then: 
\[    \infess \underline{\Scl}_{\loc}\mu  \le   \Scl_H \mu \qand   \infess \overline{\Scl}_{\loc} \mu  \le \Scl_P \mu   \; ,\]
and 
\[ \supess \underline{\Scl}_{\loc}\mu \le  \Scl^*_H \mu    \qand     \supess \overline{\Scl}_{\loc} \mu \le    \Scl^*_P \mu   \; . \]
\end{proposition}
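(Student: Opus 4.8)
The plan is to derive both lines from \cref{lemma dim loc} and \cref{infess} by restricting $\mu$ to a Borel set and reading the local scales off that set viewed as a metric subspace; we may assume $\mu\neq 0$, the null measure being handled by the conventions fixed above.

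For the first line, fix a Borel set $E$ with $\mu(E)>0$ and let $\sigma$ be the restriction of $\mu$ to $E$. Since $\sigma$ is concentrated on $E$, one has $\sigma(B(x,\epsilon))=\sigma(B(x,\epsilon)\cap E)$ for every $x$, so the lower and upper local scales of $\sigma$ computed in the ambient space coincide with those computed in the metric subspace $(E,d)$. Applying \cref{lemma dim loc} to $(E,d)$ and $\sigma$ then gives $\supess\underline{\Psi}_{\loc}\sigma\le\Psi_H E$ and $\supess\overline{\Psi}_{\loc}\sigma\le\Psi_P E$. Combining with \cref{infess}, which provides $\infess\underline{\Psi}_{\loc}\mu\le\infess\underline{\Psi}_{\loc}\sigma\le\supess\underline{\Psi}_{\loc}\sigma$ (and likewise for the upper scale), we obtain $\infess\underline{\Psi}_{\loc}\mu\le\Psi_H E$ and $\infess\overline{\Psi}_{\loc}\mu\le\Psi_P E$; taking the infimum over all such $E$ yields the first line by the very definitions of $\Psi_H\mu$ and $\Psi_P\mu$.

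For the second line --- which is exactly the content of \cref{rem 35} --- I would run the same argument with $E$ ranging over Borel sets of full mass, $\mu(X\setminus E)=0$. Then $\sigma=\mu|_E$ equals $\mu$ as a measure, so $\supess\underline{\Psi}_{\loc}\mu=\supess\underline{\Psi}_{\loc}\sigma$ and $\supess\overline{\Psi}_{\loc}\mu=\supess\overline{\Psi}_{\loc}\sigma$; \cref{lemma dim loc} applied to $(E,d)$ and $\sigma$ gives $\supess\underline{\Psi}_{\loc}\mu\le\Psi_H E$ and $\supess\overline{\Psi}_{\loc}\mu\le\Psi_P E$, and the infimum over such $E$ produces $\supess\underline{\Psi}_{\loc}\mu\le\Psi^*_H\mu$ and $\supess\overline{\Psi}_{\loc}\mu\le\Psi^*_P\mu$.

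The only delicate point is bookkeeping, not substance: one must keep straight the essential extrema taken with respect to $\mu$ on $X$, to $\sigma$ on $X$, and to $\sigma$ on $E$. Since $\sigma\le\mu$ and $\sigma$ is concentrated on $E$, a $\sigma$-negligible subset of $E$ is the trace of a $\mu$-negligible set, and the local scales of $\sigma$ do not distinguish $X$ from $E$; \cref{infess} is precisely the lemma that packages the needed comparison $\infess\underline{\Psi}_{\loc}\mu\le\infess\underline{\Psi}_{\loc}\sigma$ (resp. for the upper scale), so once it is invoked the proof reduces to a direct application of the already-established \cref{lemma dim loc}.
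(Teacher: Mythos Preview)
Your proof is correct and follows essentially the same route as the paper: for the first line you restrict $\mu$ to a Borel set $E$ of positive mass, apply \cref{lemma dim loc} to the restriction on $(E,d)$, chain through \cref{infess}, and take the infimum over $E$; for the second line you invoke \cref{rem 35}, which is exactly what the paper does. Your extra paragraph on the bookkeeping (why the local scales of $\sigma$ do not distinguish $X$ from $E$, and how the various essential extrema compare) makes explicit a point the paper leaves implicit, but the structure and the key lemmas invoked are identical.
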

\begin{proof}
The second line of inequalities is given by \cref{rem 35}. It remains to show the first line of inequalities. 
Let $ E $ be a Borel subset of $X$ with $ \mu$-positive mass. With $ \sigma $ the restriction of $ \mu$ to $E$, it holds by \cref{lemma dim loc}:
\[ \supess \underline \Scl_{\loc}  \sigma \le \Scl_H  E   \qand \supess \overline \Scl_{\loc}  \sigma \le \Scl_P E \;  . \]
Then, by \cref{infess},it follows: 
\[ \infess \underline \Scl_{\loc}  \mu \le \Scl_H  E   \qand \infess \overline \Scl_{\loc}  \mu \le \Scl_P E \;  .  \]
Taking the infima over $E $ with positive mass ends the proof. 
\end{proof}
 Explicit links between packing scales, Hausdorff scales and local scales of measures can now be established by proving \cref{resultsB}. Let us first recall its statement:
Let $(X, d) $ be a metric space and  $\mu$ a Borel measure on $X$,  then it holds: 
\[  \Scl_H \mu = \infess \underline{\Scl}_{\loc}\mu   \le     \Scl_P \mu = \infess \overline{\Scl}_{\loc} \mu \]
and 
\[  \Scl^*_H \mu = \supess \underline{\Scl}_{\loc}\mu  \le    \Scl^*_P \mu = \supess \overline{\Scl}_{\loc} \mu  \; . \]
\begin{proof}[Proof of \cref{resultsB}]
By \cref{B first} it remains only to show four inequalities: 
\[ \Scl_H \mu  \underbrace{\le}_{(i)}   \infess \underline{\Scl}_{\loc} \mu \quad ; \quad   \Scl_P \mu  \underbrace{\le}_{(ii)}   \infess \overline{\Scl}_{\loc} \mu  \; \]
and  
\[ \Scl^*_H \mu  \underbrace{\le}_{(iii)}   \supess \underline{\Scl}_{\loc} \mu   \quad ; \quad\Scl^*_P \mu  \underbrace{\le}_{(iv)}   \supess \overline{\Scl}_{\loc} \mu     \; . \]
Note that for each of the above inequalities, if the right-hand side quantity is infinite, there is nothing to prove. In each of the following proofs, we will assume that this quantity is finite. \newline
\textsc{Proof of $(i)$:} 
Fix  $ \alpha > \infess \underline{\Scl}_{\loc} $ and $\beta > \alpha$. By definition of scaling,  there exists $\delta > 0$ such that for any  $r \in (0,  \delta) $ it holds 
\begin{equation} \label{i_ineq5}
\scl_\beta( 5r) \le \scl_\alpha( r) \; . 
\end{equation}
Denote: 
 \[  F := \{ x \in X :  \underline\Scl_{\loc} \mu (x) < \alpha \}  \; . \]
 Then it holds  $\mu( F) >0$  and by \cref{contdiscr} for any $x$ in $F$ there exists a minimal integer $n(x)$  such that:  
 \begin{equation} \label{i_mesball}
  \mu \left(  B( x, r(x))\right)  \ge \scl_\alpha( r(x))  \quad \text{ where }   r(x) : = \exp ( - n(x)) \le  \delta  \; .
 \end{equation}
Now set: 
 \[ \mathcal{F}  := \left\{ B( x,  r(x)) : x \in F \right \}. \]
By Vitali covering \cref{lemvitali}, there exists a countable subset  $J \subset F $ such that  $\left(  B( x ,  r(x) ) \right)_{  x \in J } $ is a $ \delta$-pack of $F$ and $F \subset \bigcup_{x \in J} B( x, 5 r(x))$. 
Then, by \cref{i_ineq5,i_mesball} it holds: 
 \[ \sum_{x \in J} \scl_\beta (  5r(x)) \le   \sum_{x \in J} \scl_\alpha ( r(x) )  \le   \sum_{x \in J} \mu  ( B(x, r(x) ) )  \le \mu (F ) \; .  \]
It follows that $\mathcal{H}_\delta^{\scl_\beta} ( F)  \le \mu (F)$ and thus $  \mathcal{H}^{\scl_\beta} ( F)  \le \mu (F)$ as $ \delta$ goes to $ 0$. It follows that $ \Scl_H \mu \le \Scl_H F \le \beta$. Taking $ \beta > \alpha$ close to $\infess \underline{\Scl}_{\loc} \mu $,  we  obtain inequality $(i)$.  \newline
\textsc{Proof of $(ii)$:}
Consider $ \alpha > \supess \overline{\Scl}_{\loc} \mu $. Set $ E = \left\{ x \in X : \overline \Scl_{\loc} \mu (x) < \alpha\right\}$. Observe that $\mu(X \backslash E) = 0$ and:  
\[ E = \bigcup_{i \ge1 } E_i  \quad \text{where} \ E_i := \left\{ x \in E : \forall r \le 2^{-i},  \ \mu( B(x,  r) )   \ge \scl_\alpha(r) \right\}  \; .\]
By $\sigma$-stability of packing scales provided by \cref{countstab}, it holds $ \Scl_P E = \sup_{i \ge 1}  \Scl_P E_i$. It is then enough to show that for any $i \ge 1$,  we have $  \Scl_P E_i \le \alpha$.  Indeed, then  taking $\alpha$ arbitrarily close to  $\supess \overline{\Scl}_{\loc} \mu$ allows us to conclude. In that way, let us fix  $i \ge 1$. 
Fix  also $\delta \in ( 0,  2^{ -i} ) $ and  consider a countable set $ J$ and $ ( B_j )_{j \in J}$ a $\delta$-pack of $E_i$. 
It follows: 
\[ \sum_{ j \in J}  \scl_\alpha( \vert B_j \vert )  \le   \sum_{ j \in J}  \mu(B_j  )  \le 1  \; . \]
Since this holds true for any $\delta$-pack, we have: 
\[ \mathcal{P}_\delta^{\scl_\alpha}( E_i) \le 1 \; . \]
As  $\delta$ goes to $0$ we obtain:
 \[ \mathcal{P}^{\scl_\alpha}( E_i)   \le 1 \; .\]
It follows that  we indeed have $  \Scl_P E_i \le \alpha$. \newline 
\textsc{Proof of $ (iii)$:}
Fix $ \alpha > \supess \underline{\Scl}_{\loc} $. For $\beta > \alpha$, consider  $\delta > 0$ such that for any $r \in (0,  \delta) $ it holds:
\begin{equation} \label{iii_ineq5} 
\scl_\beta( 5r) \le \scl_\alpha( r) \; . 
\end{equation}
Denote: 
 \[  F := \left\{ x \in X :  \underline\Scl_{loc} \mu (x) < \alpha \right\}  \; .  \]    
Note that $ F $  has total mass and by \cref{contdiscr} for every $x \in F$ there exists a minimal integer $n(x)$ such that:
\begin{equation} \label{iii_mesball}
   \mu \left(  B( x, r(x) )\right)  \ge \scl_\alpha( r(x)) \quad \text{ where } r(x) : = \exp ( - n(x)) \le \delta  \; .
\end{equation}

Now put: 
 \[ \mathcal{F}  := \left\{ B( x,  r(x)) : x \in F \right\} \; .\]
By Vitali's \cref{lemvitali}, there exists a countable subset $J \subset F $ such that $\left(  B( x ,  r(x) ) \right)_{  x \in J }  $ is a $\delta$-pack and $F = \bigcup_{ x\in J} B(x, 5 r(x) )    $. 
Thus, by \cref{iii_ineq5,iii_mesball} it holds:
 \[ \sum_{x \in J} \scl_\beta (  5 r(x)  ) \le   \sum_{x \in J} \scl_\alpha ( r(x) )  \le   \sum_{x \in J} \mu  ( B(x, r(x)) ) \le \mu (F )  \; . \]
It follows that $\mathcal{H}_\delta^{\scl_\beta} ( F)  \le \mu (F)$ and thus $  \mathcal{H}^{\scl_\beta} ( F)  \le \mu (F)$ as $ \delta$ goes to $ 0$. It follows that $ \Scl^*_H \mu \le \Scl_H F \le \beta$. Taking $ \beta > \alpha$ close to $\supess \underline{\Scl}_{\loc} \mu $,  we  obtain inequality $(iii)$. \newline
\textsc{Proof of $(iv)$:}
Fix $ \alpha > \infess   \overline \Scl_{\loc} \mu$. Consider the set $ {E := \left\{ x \in X : \overline \Scl_{\loc} \mu < \alpha \right\} }$.  Then observe that $\mu(X \backslash E) = 0$  and that we can write:
\[ E = \bigcup_{i \ge1 } E_i  \hspace{1.5cm} \text{where} \ E_i = \left\{ x \in E : \forall r \le 2^{-i},  \ \mu( B(x,  r) )   \ge \scl_\alpha(r) \right\} \; . \] 
By \cref{countstab}, we have $ \Scl_P E = \sup_{i \ge 1}  \Scl_P E_i$,  it is then enough to show that for any $i \ge 1$, we have $  \Scl_P E_i \le \alpha$. We indeed can take  $\alpha$ arbitrarily close to $\infess \overline{\Scl}_{\loc} \mu$. We then fix $i \ge 1$. Fix $\delta \in ( 0,  2^{ -i} ) $.  We consider $J$ a countable set and $ (B_j )_{j \in J}$ a $\delta$-pack of $E_i$. 
Then it holds:
\[ \sum_{ j \in J}  \scl_\alpha( \vert B_j \vert )  \le   \sum_{ j \in J}  \mu(B_j  )  \le 1 \; . \]
Since this holds true for any $\delta$-pack, it follows: 
\[ \mathcal{P}_\delta^{\scl_\alpha}( E_i) \le 1 \; .\]
When $\delta$ tends to $0$, the latter inequality leads to:
 \[ \mathcal{P}^{\scl_\alpha}( E_i)  \le \mathcal{P}_0^{\scl_\alpha}( E_i) \le 1 \; .\]
From there, we deduce $  \Scl_P E_i \le \alpha$,  which concludes the proof of $ \Scl_P^* \mu  \le  \supess \overline{\Scl}_{\loc} \mu $ and thus the one of \cref{resultsB}. 
\end{proof}
\section{Examples and applications of theorems of comparison of scales}
\subsection{Scales of infinite products of finite sets \label{scales of group} }
Natural toy models in the study of scales are given by  products $ Z = \prod_{ n \ge 1 } Z_k $ of finite sets. 
To define the metric $ \delta$ on this set, we fix a sequence $ ( \epsilon_n)_{n  \ge 1}$  decreasing to $ 0$ and verifying $  \log \epsilon_{n+1} \sim \log  \epsilon_n $ as $ n \to + \infty$. We then define the distance between  $ \underline{x}  = ( x_n)_{n \ge 1}  \in Z $  and $\underline{x}'  = ( x'_n)_{n \ge 1}  \in Z $ by: 
\[ \delta ( \underline{x}  , \underline{x}' ) := \epsilon_m  \; ,  \]
where $ m = \nu(\underline{x}  , \underline{x}' ) := \inf \lbrace n \ge 1 : x_n \neq x'_n \rbrace  $ is the minimal index such that the sequences $ \underline{x} $ and $ \underline{x}'$ differ. 
We endow each $Z_n$ with the discrete topology, thus $ \delta$ provides the product topology on $Z$. 

A natural measure on $Z$ is the following product measure: 
\[ \mu := \otimes_{n \ge 1}  \mu_n \; ,  \]
where $ \mu_n$ is the equidistributed measure on $ Z _n$ for $ n \ge 1$. 
Note that the metric is highly dependent on the choice of the sequence $ ( \epsilon_n)_{n \ge 1} $. We will tune both values of the sequences $( \epsilon_n)_{ n \ge 1} $ and $ (\Card Z_n)_{ n \ge 1} $ to reach different values of scales for $ Z $ and $\mu$. 
First, box scales of $ Z$ coincide with local scales of $ \mu$ according to the following: 
\begin{proposition}
\label{toy example}
For any scaling $ \Scl$, it holds for every $\underline{x}  \in Z$: 
\begin{equation} \label{eq ptoy1}
\underline{\Scl}_{\loc} \mu (\underline{x} )  = \underline \Scl_B Z = \sup \left\{ \alpha > 0 : \scl_\alpha ( \epsilon_{n}) \cdot \prod_{k =1}^n  \Card Z_k    \xrightarrow[n \rightarrow+ \infty]{} + \infty \right\} \; 
\end{equation} 
and 
\begin{equation} \label{eq ptoy2} \overline{\Scl}_{\loc} \mu (\underline{x} )  = \overline \Scl_B Z = \inf \left\{ \alpha > 0 : \scl_\alpha ( \epsilon_{n}) \cdot \prod_{k =1}^n \Card Z_k  \xrightarrow[n \rightarrow  + \infty]{} 0 \right\} \; .
\end{equation} 
\end{proposition}
\begin{proof} 
To prove this, first note that for $ \underline{x} \in X $ and $ n \ge 1 $, it holds: 
\[  B  ( \underline{x}  , \epsilon_n) = \lbrace \underline{x}' \in Z : x'_1 = x_1 , \dots , x'_n = x_ n \rbrace  \; .  \]
Such a ball is usually called a $ n$-cylinder.
By definition of $ \mu$ it holds:
\[ \mu  \left( B( \underline{x} , \epsilon_n )  \right) =  \prod_{ k = 1 }^n \mu_k ( x_k)   = \prod_{ k = 1 }^n  \frac{1 }{\Card Z_k} \; .  \]  
Moreover as two balls with the same radius are either equal or disjoint, there are exactly  $ \prod_{ k = 1} ^n \Card Z_k $ different balls of radius $ \epsilon_n$ and in particular  $ \mathcal{N}_{ \epsilon_n}( Z )  = \prod_{ k = 1} ^n \Card Z_k$.

We just proved: 
\begin{equation} \label{eq toy}
\mathcal{N}_{\epsilon_n} (Z)  =   \prod_{k = 1 }^n \Card Z_k   =  \mu ( B ( \underline z , \epsilon_n))^{-1}  \; . 
\end{equation}  
Since $ \log \epsilon_{n+1} \sim \log \epsilon_n $ as $ n \to  + \infty$,  by \cref{contdiscr} and  \cref{eq toy}, we obtain: 
\begin{equation} \label{eqtoy3} 
\underline{\Scl}_B Z = \sup \left\{ \alpha > 0 :  \prod_{ k = 1 } ^n \Card Z_k \cdot \scl_\alpha ( \epsilon_n) \xrightarrow[n \to + \infty ]{} 0 \right\} = \underline{\Scl}_{\loc}  \mu ( \underline{x} ) \; , 
\end{equation} 
which is actually \cref{eq ptoy1}. We obtain similarly \cref{eq ptoy2}. 

\end{proof} 

 \cref{toy example} together with \cref{resultsA} and \ref{resultsC} directly implies: 
\begin{corollary}
\label{toy example cor}
We have moreover: 
\[  \Scl_H Z = \underline{\Scl}_Q  \mu = \underline \Scl_B Z   \qand  \Scl_P Z= \overline{\Scl}_Q  \mu   = \overline \Scl_B Z  \; . \]
\end{corollary}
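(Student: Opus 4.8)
The plan is that this corollary is a purely formal consequence of \cref{toy example} together with the comparison theorems \cref{resultsA}, \cref{lemma dim loc} and \cref{resultsC}; no new estimate is needed. Throughout, one reads the equalities in $[0,+\infty]$, and one notes that $(Z,\delta)$ is compact metrizable (hence separable) and $\mu$ is a probability measure, so the hypotheses of those theorems (in particular those of \cref{rep au pb}) are met.

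The first step is to translate \cref{toy example} into a statement about essential suprema and infima. Since $\underline{\Psi}_{\loc}\mu(\underline x) = \underline{\Psi}_B Z$ and $\overline{\Psi}_{\loc}\mu(\underline x) = \overline{\Psi}_B Z$ for \emph{every} $\underline x\in Z$, the local scales are constant functions, hence
\[ \infess\underline{\Psi}_{\loc}\mu = \supess\underline{\Psi}_{\loc}\mu = \underline{\Psi}_B Z \qand \infess\overline{\Psi}_{\loc}\mu = \supess\overline{\Psi}_{\loc}\mu = \overline{\Psi}_B Z \; . \]

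The second step handles the Hausdorff and packing scales of the space. By \cref{resultsA}, $\Psi_H Z\le\underline{\Psi}_B Z$ and $\Psi_P Z\le\overline{\Psi}_B Z$, whereas \cref{lemma dim loc} applied with $X=Z$ gives $\supess\underline{\Psi}_{\loc}\mu\le\Psi_H Z$ and $\supess\overline{\Psi}_{\loc}\mu\le\Psi_P Z$. Feeding in the equalities of the first step squeezes both chains, yielding $\Psi_H Z=\underline{\Psi}_B Z$ and $\Psi_P Z=\overline{\Psi}_B Z$.

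The third step handles the quantization scales. Here I would use inequalities $(e)$--$(h)$ of \cref{resultsC} together with the trivial bounds $\underline{\Psi}^*_B\mu\le\underline{\Psi}_B Z$ and $\overline{\Psi}^*_B\mu\le\overline{\Psi}_B Z$, obtained by taking the full-measure Borel set $E=Z$ in \cref{bos scales measure}. Then
\[ \underline{\Psi}_B Z = \supess\underline{\Psi}_{\loc}\mu \le \underline{\Psi}_Q\mu \le \underline{\Psi}^*_B\mu \le \underline{\Psi}_B Z \]
and
\[ \overline{\Psi}_B Z = \supess\overline{\Psi}_{\loc}\mu \le \overline{\Psi}_Q\mu \le \overline{\Psi}^*_B\mu \le \overline{\Psi}_B Z \; , \]
so both chains collapse to equalities and $\underline{\Psi}_Q\mu=\underline{\Psi}_B Z$, $\overline{\Psi}_Q\mu=\overline{\Psi}_B Z$. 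Combining the second and third steps gives the two displayed identities of the corollary. I do not anticipate a real obstacle: the only care required is bookkeeping—checking that the invoked theorems apply (which they do, $Z$ being separable and $\mu$ finite) and keeping the argument valid in the case where the box scales of $Z$ are infinite.
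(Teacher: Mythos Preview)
Your proof is correct and follows essentially the same approach the paper suggests: the paper only says the corollary ``can be deduced directly from \cref{resultsA} and \cref{resultsC}'' without giving details, and you have fleshed this out properly via the sandwich arguments. The one cosmetic difference is that you invoke \cref{lemma dim loc} explicitly for the Hausdorff/packing equalities, whereas the paper's hint names only \cref{resultsA} and \cref{resultsC}; since \cref{lemma dim loc} is an ingredient in the proof of \cref{resultsC} (and equivalently yields \cref{resultsB}), this is a matter of citation style rather than a genuine departure.
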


Also, for $ \Scl = \ord$ and if $  \frac{-\log \epsilon_{n}}{n}  $ is moreover converging to some positive finite constant, we have: 
\begin{corollary}
\label{toy example order}
Suppose further that $ \frac{-\log \epsilon_{n}}{n} $ converges to $C> 0 $  when $ n \rightarrow  + \infty$ ( a typical choice is  $\epsilon_{n} = \exp ( C \cdot n) $). 
Then for any scaling $ \Scl$, it holds: 
\[  \ord_H Z = \underline \ord_B Z = \liminf_{n \rightarrow + \infty}\frac{1 }{C  \cdot  n } \log  \left(  \sum_{k = 1 }^n \log   \left(  \Card Z_k  \right)  \right)   \;  \]
and 
\[ \ord_P Z = \overline \ord_B Z = \limsup_{n \rightarrow + \infty}\frac{1 }{C  \cdot  n } \log \left(  \sum_{k = 1 }^n \log \left(  \Card Z_k  \right)  \right)  \; .  \]
\end{corollary}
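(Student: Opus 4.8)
The plan is to reduce the whole statement to the two box orders of $Z$ and then to evaluate the defining supremum and infimum explicitly, using the special form $\psi_\alpha(\epsilon)=\exp(-\epsilon^{-\alpha})$ of the scaling $\ord$. First I would invoke \cref{toy example cor}, which already gives $\ord_H Z=\underline\ord_B Z$ and $\ord_P Z=\overline\ord_B Z$; so it suffices to identify $\underline\ord_B Z$ and $\overline\ord_B Z$ with the claimed $\liminf$ and $\limsup$. I would then record that, for the metric $\delta$, a closed ball of radius $\epsilon_n$ is exactly a cylinder fixing the first $n-1$ coordinates, so $\mathcal N_{\epsilon_n}(Z)=\prod_{k=1}^{n-1}\Card(Z_k)$. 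Writing $N_n:=\prod_{k=1}^{n}\Card(Z_k)$, the formula of \cref{other expr} for $\ord$ together with the sequential characterization \cref{contdiscr} (applicable since $\log\epsilon_{n+1}\sim\log\epsilon_n$) — or, equivalently, \cref{toy example} itself — yields
\[ \underline\ord_B Z=\sup\{\alpha>0:\ N_{n-1}\exp(-\epsilon_n^{-\alpha})\to+\infty\}\qand \overline\ord_B Z=\inf\{\alpha>0:\ N_{n-1}\exp(-\epsilon_n^{-\alpha})\to0\}, \]
and taking logarithms the conditions become $\log N_{n-1}-\epsilon_n^{-\alpha}\to\pm\infty$, where $\epsilon_n^{-\alpha}=\exp(\alpha(-\log\epsilon_n))$.

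The only point requiring a genuine argument is the following elementary asymptotic lemma, which I would isolate and prove: if $(a_n)$ and $(b_n)$ are positive sequences with $a_n\to+\infty$ and $b_n\to+\infty$, then $\sup\{\alpha>0:\ b_n-e^{\alpha a_n}\to+\infty\}=\liminf_n\frac{\log b_n}{a_n}$ and $\inf\{\alpha>0:\ b_n-e^{\alpha a_n}\to-\infty\}=\limsup_n\frac{\log b_n}{a_n}$. For the first identity I would squeeze: if $\alpha<\alpha'<\liminf_n\frac{\log b_n}{a_n}$ then $\log b_n\ge\alpha' a_n$ for all large $n$, hence $b_n-e^{\alpha a_n}\ge e^{\alpha a_n}(e^{(\alpha'-\alpha)a_n}-1)\to+\infty$ because $a_n\to+\infty$; conversely if $\alpha>\alpha''>\liminf_n\frac{\log b_n}{a_n}$ then $\log b_n<\alpha'' a_n$ infinitely often, so $b_n-e^{\alpha a_n}<e^{\alpha'' a_n}-e^{\alpha a_n}\to-\infty$ along a subsequence, which prevents convergence to $+\infty$. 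The second identity is symmetric, exchanging $\pm\infty$ and $\liminf\leftrightarrow\limsup$. Applying the lemma with $b_n=\log N_{n-1}$ and $a_n=-\log\epsilon_n$ turns the two displays above into $\underline\ord_B Z=\liminf_n\frac{\log\log N_{n-1}}{-\log\epsilon_n}$ and $\overline\ord_B Z=\limsup_n\frac{\log\log N_{n-1}}{-\log\epsilon_n}$; inserting the asymptotics $-\log\epsilon_n=n\log C\,(1+o(1))$ gives $\frac{\log\log N_{n-1}}{-\log\epsilon_n}=\frac{1}{n\log C}\log\!\big(\sum_{k=1}^{n-1}\log(\Card Z_k)\big)(1+o(1))$, and the harmless replacement of $n-1$ by $n$ does not affect the $\liminf$ or $\limsup$. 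Together with \cref{toy example cor} this is exactly the assertion.

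Finally I would dispose at the outset of the degenerate case: if only finitely many $\Card(Z_k)$ exceed $1$, then $Z$ is bi-Lipschitz to a finite set, all its scales vanish, and the right-hand sides are $0$ as well, so the identity holds; otherwise $N_n\to+\infty$ and the argument above applies. I expect the only real obstacle to be the asymptotic lemma, and within it the step upgrading a pointwise inequality $b_n>e^{\alpha a_n}$ to the divergence $b_n-e^{\alpha a_n}\to+\infty$: this is precisely where $\epsilon_n\to0$ (i.e. $a_n\to+\infty$) enters, and it is what forces the reverse bound to be phrased with an "infinitely often" rather than an "eventually" statement. Everything else — the reduction via \cref{toy example cor}, the computation of $\mathcal N_{\epsilon_n}(Z)$, and the substitution $-\log\epsilon_n\sim n\log C$ — is routine bookkeeping.
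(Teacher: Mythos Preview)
Your proof is correct and follows essentially the same route as the paper's: reduce to the box orders via \cref{toy example cor}, compute $\mathcal N_{\epsilon_n}(Z)$, and substitute into the sequential $\liminf/\limsup$ expression for $\ord_B$. The only difference is that the paper quotes this expression directly from \cref{other expr} and \cref{contdiscr} and substitutes, whereas you re-derive the special case you need via your explicit asymptotic lemma (and you also dispose of the degenerate finite case, which the paper does not mention).
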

\begin{proof}
Recall that the scaling defining order is given for $ \alpha > 0   $ and $ \epsilon \in (0,1)$ by $ \scl_{\alpha}^{2,1} = \exp ( - \epsilon^{-\alpha} ) $. The first equality in \cref{eqtoy3} reads as: 
 \[ \underline \ord _B ( X ) =  \sup \left\{ \alpha > 0 :  - \epsilon_n^{-\alpha } +  \sum_{ k = 1 } ^n \log  \Card Z_k   \xrightarrow[n \to + \infty ]{} + \infty \right\}   \;  .   \]
 As  $  \epsilon_n^{-\alpha}  = e^{- \alpha \log \epsilon_n} $ and $ - \log \epsilon_n \sim Cn $, we obtain the first announced formula. The second formula is deduced similarly.  
\end{proof}

Such examples of products of finite sets allow us to exhibit compact metric spaces with arbitrarily high orders : 
\begin{example} \label{ex inho}
For any  $ \alpha \ge \beta > 0 $, there exists a compact metric probability space $(Z,\delta, \mu )$ such that for any $ z \in Z $:  
\[ \beta =  \underline{\ord} _{\loc}  \mu (z)  = \ord_H Z =  \underline{\ord} _{Q}  \mu  = \underline{\ord} _B   Z \]
and
\[ \alpha  =  \overline{\ord} _{\loc}  \mu (z)  = \ord_P Z =  \overline{\ord} _{Q}  \mu   = \overline{\ord} _B  Z  \; . \]
\end{example}
In particular, with $ \alpha > \beta$  we obtain examples of metric spaces with finite order such that the Hausdorff and packing orders do not coincide. Moreover, for a countable dense subset $ F $ of $X$, it holds:
\[  \ord_H F = \ord_P F = 0  \qand  \underline \ord_B F = \beta < \alpha = \overline \ord_B F  \; . \]
It follows that none of the inequalities of \cref{resultsA} for the case of order is an equality in the general case.
\begin{proof}[Construction of \cref{ex inho}] 
Let  $(u_k)_{k \ge 0 } $  be the sequence defined by: 
\begin{equation*}
u_k =
\left\lbrace
\begin{array}{ccc}
\lceil \exp( \exp (\beta \cdot k))  \rceil & \mbox{if} & c^{2j}  \le k <  c^{2j +1 }   \\
\lfloor \exp( \exp (\alpha \cdot k) )  \rfloor  & \mbox{if} &   c^{2j+1}  \le k <  c^{2j +2  }
\end{array}\right.
\end{equation*}
where $c = \lfloor \frac{\alpha}{ \beta} \rfloor + 1$.
We denote $ Z := \prod_{n \ge 1}  \Z / u_k \Z  $  endowed with the metric $ \delta $ defined by: 
\[ \delta  ( \underline z , \underline w ) :=  \exp ( - \inf\left\{ n \ge 1 : z_n \neq w_n\right\} )  \]
for $ \underline z  = (z_n ) _{n \ge 1 }$ and $ \underline w  = (w_n)_{n \ge 1}$ in $Z$. 
Let us denote $ \lambda_n = \frac{1}{n} \log  \sum_{k=1}^n \log u_k $. 
Thus by \cref{toy example order}, it follows: 
  \[ \ord_ H Z = \underline \ord_B Z = \liminf_{n \to  + \infty} \lambda_n  \qand \ord_ P Z = \overline \ord_B Z = \limsup_{n \to +  \infty} \lambda_n \; . \]
  It remains to show that $  \lambda^- := \liminf_{ n \to  + \infty }  \lambda_n = \beta $ and $  \lambda^+ := \limsup_{ n \to  + \infty }  \lambda_n = \alpha $ in order to show that $ (Z,\delta)$ satisfies the desired properties. First notice that $ \exp (\exp ( \beta  \cdot n ) )\le u_n \le \exp ( \exp ( \alpha \cdot n)) $ for every integer $ n $. It follows that $ \lambda^- \ge \beta $ and $ \lambda^+ \le \alpha$. 
Denote $ n_j = c^{2j +1 }$ and observe that:
\[ \lambda_{n_j}   \ge \frac{1}{n}  \log \log (u_{n_j}  )  \xrightarrow[ j \to + \infty]{} \alpha \; .  \]
Thus we have $ \lambda^+ \ge \alpha$. 
Moreover, denote  $ m_j = c^{2j+1} - 1 $. We have the following: 
\begin{lemma}
For any $ j \ge 1$ and for any $  1 \le  k \le m_j$, it holds: 
\[  u_k  \le  u_{m_j}  \; . \]
\end{lemma}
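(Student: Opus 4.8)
The plan is to unwind the block structure of $(u_k)_{k\ge 1}$. For $i\ge 0$ call $[c^{2i},c^{2i+1})$ an \emph{even block} (on it $u_k=\lfloor \exp(\exp(\beta k))\rfloor$) and $[c^{2i+1},c^{2i+2})$ an \emph{odd block} (on it $u_k=\lfloor \exp(\exp(\alpha k))\rfloor$); since $c^0=1$ these blocks partition $\{k\ge 1\}$. As $\alpha\ge\beta$ we have $c=\lfloor\alpha/\beta\rfloor+1\ge 2$, so $c^{2j}\le c^{2j+1}-1=m_j<c^{2j+1}$ and $m_j$ lies in the $j$-th even block; hence $u_{m_j}=\lfloor \exp(\exp(\beta m_j))\rfloor$. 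Since $t\mapsto\lfloor \exp(\exp(t))\rfloor$ is non-decreasing, it suffices to bound, for each $k$ with $1\le k\le m_j$, the exponent attached to $u_k$ (namely $\beta k$ if $k$ is in an even block, $\alpha k$ if $k$ is in an odd block) by $\beta m_j$, and to see that the bound is strict when $k<m_j$.

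First I would treat the case where $k$ lies in an even block $[c^{2i},c^{2i+1})$ with $i\le j$: then its exponent is $\beta k\le \beta m_j$, with equality exactly when $k=m_j$. Next, suppose $k$ lies in an odd block $[c^{2i+1},c^{2i+2})$; from $k\le m_j<c^{2j+1}$ we get $i\le j-1$, hence $k\le c^{2i+2}-1\le c^{2j}-1$, so the exponent attached to $u_k$ is at most $\alpha(c^{2j}-1)$. The crucial estimate is then
\[
\alpha(c^{2j}-1)\ <\ \beta(c^{2j+1}-1)\ =\ \beta m_j ,
\]
and this is exactly where the choice $c=\lfloor\alpha/\beta\rfloor+1$ enters: it gives $c>\alpha/\beta$, i.e.\ $\beta c>\alpha$, whence
\[
\beta(c^{2j+1}-1)-\alpha(c^{2j}-1)=(\beta c-\alpha)\,c^{2j}+(\alpha-\beta)>0 .
\]
In both cases the exponent of $u_k$ is $\le\beta m_j$, so $u_k\le u_{m_j}$ by monotonicity of $\lfloor\exp(\exp(\cdot))\rfloor$, and it is strictly $<\beta m_j$ whenever $k<m_j$.

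The one point that requires care — and the only place where I would slow down — is converting the strict inequality of exponents ($\beta k<\beta m_j$, resp.\ $\alpha k\le \alpha(c^{2j}-1)<\beta m_j$) into the strict inequality $u_k<u_{m_j}$ of the floored double exponentials: one must check that the two arguments of $\exp(\exp(\cdot))$ are separated by enough (at least one integer step times $\beta$ in the even-block case, at least $(\beta c-\alpha)c^{2j}$ in the odd-block case) that the values of $\exp(\exp(\cdot))$ differ by more than $1$ and hence have distinct integer parts; the hypotheses $j\ge 1$, $k\ge 1$ keep all quantities in the range where this is routine. Note that if one only needs the non-strict inequality $u_k\le u_{m_j}$ for $1\le k\le m_j$ — which is what the subsequent computation of $\liminf_n\lambda_n$ and $\limsup_n\lambda_n$ actually uses — then this last step is unnecessary and the argument ends with the previous paragraph.
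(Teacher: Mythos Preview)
Your proof is correct and follows essentially the same two-case strategy as the paper (split according to whether $k$ lies in the current even block $[c^{2j},c^{2j+1})$ or in an earlier block), with the key input $\alpha<\beta c$ used in the same way. You are in fact more careful than the paper: you make the algebraic identity $\beta(c^{2j+1}-1)-\alpha(c^{2j}-1)=(\beta c-\alpha)c^{2j}+(\alpha-\beta)>0$ explicit, and you correctly flag the floor issue for the strict inequality---a point the paper glosses over and which, as you note, is irrelevant for the subsequent computation of $\lambda^{-}$.
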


\begin{proof} 
If  $ c^{ 2j } \le k \le  m_j$,  then  $u_k = \lceil \exp( \exp (\beta \cdot k)) \rceil  \le \lceil \exp( \exp (\beta \cdot m_j)) \rceil  =  u_{m_j} $. Otherwise, we have $k < c^{2j}$, and then $u_k <  \lfloor \exp( \exp (\alpha \cdot c^{2j} ))  \rfloor <   \lceil \exp( \exp (\beta \cdot c^{2j+1} ))  \rceil = u_{m_j} $, since $ \alpha <  \beta  \cdot c$. 
\end{proof} 
From the above lemma, we have:  
\[  \lambda_{m_j} \le \frac{1}{m_j} \log m_j \log ( u_{m_j} )  \xrightarrow[j \rightarrow +  \infty]{}   \beta  \; ,   \]
 and so $ \lambda^- \le \beta$ which ends the construction of \cref{ex inho}.   
\end{proof}
\subsection{Functional spaces \label{proof func}}
This last sub-section consists in showing \cref{lem diff} that allows us to show \cref{KT+}. 
\medskip
 Denote by $ \| \cdot \|_{ C^k} $ the $C^k$-uniform norm on $ C^k ( [0,1]^d , \R) $: 
\[ \| f \|_{ C^k} :=   \sup_{  0 \le j \le k }  \| D^j f \|_{\infty} \;  .  \]
\begin{definition}
 For integers $ d \ge 1 $ and $ k \ge 0  $, let us denote: 
\[ \mathcal F^{d,k,0} :=  \left\{ f \in C^k ( [0,1]^d, [-1,1]) : \| f \|_{C^k} \le 1  \right\} \;   \] 
the $ C^k$-unit ball; and for $ \alpha \in (0,1]$ let us define:
\[ \mathcal F^{d,k,\alpha} :=  \left\{ f \in C^k ( [0,1]^d, [-1,1]) : \| f \|_{C^k} \le 1   \text{ and $D^k f$ is $\alpha$-H{\"o}lder with constant $1$  } \right\} \; .  \] 
Recall that for  $ \alpha>0$, the map $D^kf$ is $ \alpha$-H{\"o}lder with constant  $1$ if for any $x,y \in [0,1]^d$ it holds: 
\[ \| D^k f(x) - D^k f(y) \|_{\infty} \le \| x - y \|^\alpha \; . \]
\end{definition} 
Let us recall the  asymptotic given by Kolmogorov-Tikhomirov {\cite{tikhomirov1993varepsilon}[Thm XV]} on the covering number of $ { ( \mathcal{F}^{d,k,\alpha}, \| \cdot \|_\infty ) }$  stated in \cref{KT}:   
\[ C_1  \cdot \epsilon^{-\frac{d}{ k + \alpha } }  \ge   \log \mathcal{N}_\epsilon ( \mathcal{F}^{d, k, \alpha} )  \ge  C_2  \cdot  \epsilon^{-\frac{d}{ k + \alpha } } \; ,    \]
where $ C_1 > C_2  > 0$ are two constants depending on $  d,k$ and $ \alpha$.
In order to prove \cref{KT+} which states that box, packing and Hausdorff scales of $ \mathcal{F}^{d,k,\alpha}$ are all equal to $ \frac{d}{k+ \alpha}$, by \cref{resultsB}, it remains to prove \cref{lem diff}. 
The latter states: 
\[ \ord_H \mathcal{F}^{d,k, \alpha} \ge \frac{d}{k + \alpha} \; .\]
\begin{proof}[Proof of \cref{lem diff}] \label{proof of lem diff}
We first assume $ \alpha > 0$. The case $ \alpha = 0$ will be deduced at the end of the proof. 
We consider the following product of finite sets: 
\[   \Lambda  = \prod_{n \ge 1}  \left\{ 0 , 1  \right\}^{ \mathcal{R}_n }  \; .  \]
where: 
\begin{equation}  
\mathcal{R}_n :=  \left\{ \left(  \frac{i_1 }{R^n}, \dots,  \frac{i_d }{ R^n } \right)  : i_1, \dots, i_d \in \left\{ 0, \dots, R^{n}-1 \right\}  \right\} \text{ with } R := \lfloor 3^{\frac{1}{k + \alpha } } \rfloor + 1   \; .  
\end{equation}
Observe that $ \mathcal{R}_n$ is a meshgrid of step $R^{-n}$ of $[ 0, 1 ]^d$ with cardinal $ R^{nd}$. 
   We endow $\Lambda$ with the ultrametric distance  $ \delta $ defined by:
\[ \delta ( \underline \lambda, \underline \lambda'   ) = \epsilon_m  \; ,  \]
with $ m$ the minimal index such that the sequences $ \underline \lambda$ and  $ \underline \lambda'$  differ and  $ ( \epsilon _n)_{n \ge 1} $  a decreasing sequence of positive real numbers. The following will enable us to conclude the proof of \cref{lem diff}: 
\begin{lemma} \label{bilip} 
We can choose the sequence $( \epsilon_n)_{n \ge 1}$  such that:
\begin{equation} \label{eq eps} 
 - \log \epsilon_n \sim   n \cdot\log R^{ k+\alpha}  \text{ as } n \to + \infty \; . 
\end{equation}
and such that there exists an expanding embedding $ I : ( \Lambda , \delta) \to  (\mathcal{F}^{d,k,\alpha}, \| \cdot \|_\infty ) $. More precisely, for every $ \lambda  , \lambda' \in \Lambda $ it holds: 
\begin{equation} \label{eq exp}
\| I ( \lambda ) - I (\lambda' ) \|_\infty  \ge   \frac{1}{2} \delta ( \lambda, \lambda' )   \; .
\end{equation}
\end{lemma}
The choice of $  (\epsilon_n) _{n \ge1} $  will be given in \cref{eps choice}. 
\cref{bilip} is proven below. Let us see how it allows us to finish the proof of \cref{lem diff}. Since $ \log \epsilon_{n+1} \sim \log \epsilon_n $, by \cref{toy example order} it holds: 
\[ \ord_H \Lambda  = \liminf_{n \rightarrow  + \infty}\frac{1}{   n \log R^{k + \alpha} } \log \left(  \sum_{j=1}^n  \log \Card \left( \left\{ 0 , 1  \right\}^{ \mathcal{R}_j } \right) \right)   = \liminf_{n \rightarrow  + \infty}\frac{1}{   n \log R^{k + \alpha} } \log \left(  \sum_{j=1}^n   R^{dj} \cdot \log 2 \right)  = \frac{d }{ k +  \alpha} \; . \]
Now  \cref{bilip} together with \cref{cor dil} implies: 
\[ \ord_H \mathcal{F}^{d,k,\alpha}   \ge   \ord_H  \Lambda =  \frac{d}{k + \alpha}  \; ,  \]
which is the desired inequality from \cref{lem diff}. \newline
We now show  \cref{bilip} using two intermediary results \cref{estime f nq,bound}. We will actually deduce \cref{bound} from \cref{estime f nq}. 
\begin{proof}[Proof of \cref{bilip}]
Let us denote $ q := k+ \alpha $ so that $ R = \lfloor 3^{ \frac{1}{q} } \rfloor + 1 $. 
For $ f \in \mathcal{F}^{d,k,\alpha} $, let $ \| f\|_{q} $  be the $ \alpha$-H{\"o}lder constant of $ D^kf$, i.e the minimal constant $C>0$ such that for any $x,y \in [0,1]^d $ it holds: 
\[ \|D^k f(x)-D^k f(y)\|_\infty \le C \cdot \|x-y\|^\alpha \; .\]
Note that $ \| \cdot \|_q$ is a semi-norm on  $\mathcal{F}^{d,k, \alpha}$ and moreover we have: 
\[ \mathcal{F}^{d,k,\alpha}= \left\{ f \in \mathcal{F}^{d,k, 0 } : \| f \|_{q } \le 1   \right\} \; . \] 
We consider the following function defined on $ \R$: 
\[ \phi(t) := 4^q \cdot    t^q \cdot ( 1-t)^q \cdot 1\!\! 1 _{0<t<1} \; .   \]
It is clear that $\supp \phi = [0,1]$, $ \phi (0) = 0 = \phi (1) $,  $   \| \phi \|_\infty = \phi(1/2) = 1$. Observe also that $ D^k \phi ( 0 ) = 0 = D^k \phi (1) $. \newline 
Let us denote:
\[ \Phi : x \in \R^d \mapsto \phi ( 2 \| x \| )  \; .\]
Note that $ \Phi  $ is of class $ C^k$ with support $  ( \| x \| \le 1/2) $ and $  \| \Phi \|_\infty =  1$. Also  $ D^k \Phi$ equals $ 0$ outside $ ( \| x \| < 1/2 )$ and  is $ \alpha$-H\"older with constant: 
  \begin{equation} \label{Phiq}
0 <  \|\Phi \|_q < + \infty \; . 
\end{equation} 
We now proceed to the construction of the embedding. We first fix $ n$ and denote $ \mathcal{R} :=  \mathcal{R}_n$. To each $ \lambda = ( \lambda_r )_{ r \in \mathcal{R} }  \in  \left\{ 0, 1 \right\}^{\mathcal{R}}$ we associate the following map:
 \[ f_{\lambda} : x  \in [0,1]^d  \mapsto \epsilon_n \cdot\sum_{r \in \mathcal{R}} \lambda_r \cdot   \Phi (   R^n (x  - r )  )  \;,   \]
with: 
\begin{equation} \label{eps choice} 
\epsilon_n := \frac{3}{\pi^2\cdot n^2\cdot  R^{qn} \cdot  \|\Phi\|_{q}} \;.
\end{equation}
First note that $ (\epsilon_n)_{n \ge 1} $ obviously verifies \cref{eq eps}. 
We have the following result: 
\begin{lemma} \label{estime f nq}
For every $ \lambda \neq \lambda'$  in $ \lbrace 0 ,1  \rbrace ^ \mathcal{R}$, it holds: 
 \[  \|f_\lambda - f_{\lambda'} \|_\infty  = \epsilon_n     \qand \|f_\lambda  -  f_{\lambda'}   \|_q  \le    \frac{6
}{\pi^2 \cdot n^2 }  \; .\] 
\end{lemma}
\begin{proof}
For any $x \in [0,1]^d$, there exists at most one point $ r \in \mathcal{R} $ such that $ \| x - r \| <   R^{-n} /2  $. Consequently, the maps $ x \mapsto \Phi (R^n( x - r ) ) $ for $ r \in \mathcal{R}$ have  supports with disjoint interiors. It comes then:   
 \[\|  f_\lambda - f_{\lambda'}\|_\infty  = \epsilon_n \cdot   \left\| \sum_{r \in \mathcal{R} } ( \lambda_r - \lambda'_r)\cdot \Phi ( R^n ( \cdot - r  )  ) \right\|_\infty   = \epsilon_n \cdot \sup_{r \in \mathcal{R}}  \vert \lambda_r -\lambda'_r \vert \cdot \| \Phi \|_\infty = \epsilon_n  \; . \]
This provides the equality of \cref{estime f nq}. 
 
Now note similarly  that the maps $ x \mapsto D^k \Phi (R^n \cdot ( x - r ) ) $ for $ r \in \mathcal{R}$  also have supports with disjoint interiors and moreover they  are  equal to $ 0$  on the boundary of their support. Thus for every $x \in [ 0,1 ]^d $ there exists at most one value of  $ r \in \mathcal{R}$ such that $ D^k \Phi ( R^n ( x - r ))  $ is not null. 
It follows:  
\begin{align*}
\| f_\lambda - f_{\lambda'}  \|_{q} &= \epsilon_n  \cdot \left\| \sum_{r \in \mathcal{R}}   ( \lambda_r -\lambda'_r) \cdot   \Phi ( R^n ( \cdot - r) ) \right\|_q  \\ 
 &\le \epsilon_n \cdot  \sup_{  r , s \in \mathcal{R}}   \  \sup_{x \neq y \in \R^d}  \frac{\left\| ( \lambda_r -  \lambda_r') \cdot  R^{kn}  D^k \Phi ( R^n ( x - r) ) - ( \lambda_s  - \lambda'_s)  \cdot  R^{kn} D^k  \Phi ( R^n  ( y - s) )  \right\|_\infty}{ \| x-y \|^{\alpha} } \\ 
 &= \epsilon_n \cdot R^{nq} \cdot  \sup_{  r , s \in \mathcal{R}}   \  \sup_{X \neq Y \in \R^d}  \frac{\left\| ( \lambda_r -  \lambda_r') \cdot   D^k \Phi (X- R^n  r)  - ( \lambda_s  - \lambda'_s)  \cdot  D^k  \Phi ( Y - R^n s )  \right\|_\infty}{ \| X-Y \|^{\alpha} }
 \; 
\end{align*}
where the last equality is obtained using the change of variable $ X = R^n x $ and $ Y = R^n y$.
Using the triangular inequality of $ \| \cdot \|_q$, it follows: 
\[ \| f_\lambda - f_{\lambda'}  \|_{q}  \le 2 \epsilon_n \cdot  R^{qn} \cdot  \sup_{ r \in \mathcal{R} } \| \Phi  (  \cdot - R^n r ) ) \|_q  = 2 \epsilon_n \cdot  R^{qn} \| \Phi \|_q   \; .   \]
This allows us to conclude as the latter term is equal to $ \frac{6}{\pi^2 n^2}  $ by \cref{eps choice}. 

\end{proof}
Note in particular, in \cref{estime f nq},  that if $\lambda' = 0 $ then $f_{\lambda'} = 0 $, thus for $ \lambda \neq 0$, it holds:
\begin{equation} \label{norm f}
   \| f_\lambda \|_\infty  = \epsilon_n \qand \|f_\lambda  \|_q  \le    \frac{6
}{\pi^2 \cdot n^2 }  \;  .
\end{equation} 
To embed $ \Lambda$ into $ \mathcal{F}^{d,k,\alpha}$, we use the following that we will prove using the above \cref{estime f nq}: 
\begin{lemma} \label{bound}
For every  $  \underline \lambda = (\lambda_n )_{ n \ge 1 }  \in \Lambda $, the function series $ \sum_{n \ge 1 }  f_{\lambda_n}  $ converges in $ C^0 ( [ 0,1]^d, [-1,1] ) $ and moreover its limit lies in $ \mathcal{F}^{d,k,\alpha} $.
\end{lemma}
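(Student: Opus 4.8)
\textbf{Proof proposal for \cref{bound}.} The plan is to obtain the limit by a Weierstrass $M$-test applied simultaneously to the series $\sum_{n\ge 1}f_{\lambda_n}$ and to each of its formal derivative series $\sum_{n\ge 1}D^j f_{\lambda_n}$ for $0\le j\le k$. First I would bound the derivatives of a single block. Writing each bump of $f_{\lambda_n}$ as $\epsilon_n\lambda_i\,\phi(R^n\|\cdot-x_i\|)=\epsilon_n\lambda_i\,g(R^n(\cdot-x_i))$ with $g(y):=\phi(\|y\|)$, the chain rule gives $\|D^j\big(g(R^n\cdot)\big)\|_\infty=R^{nj}\|D^j g\|_\infty$, and $\|D^j g\|_\infty<+\infty$ because $\phi$ vanishes to order $q=k+\alpha>k$ at both endpoints of its support, so that $g\in C^k$ (this is the reason for the exponents $q$ in the definition of $\phi$). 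Since at each point only boundedly many of the bumps of $\mathcal S_n$ are nonzero, one gets $\|D^j f_{\lambda_n}\|_\infty\le C_{d,k,\phi}\,\epsilon_n R^{nj}$ for $0\le j\le k$. As $j\le k$ while $\epsilon_n=\tfrac{6}{\pi^2 n^2 R^{qn}\|\phi\|_q}$ carries the factor $R^{-qn}$ with $q>k$, this is dominated by $C'_{d,k,\phi}\,n^{-2}R^{-(q-j)n}$, which is summable. Hence $\sum_n f_{\lambda_n}$ and each $\sum_n D^j f_{\lambda_n}$ ($0\le j\le k$) converge uniformly on $[0,1]^d$, so, applying the classical term-by-term differentiation theorem inductively on $j$, the limit $F:=\sum_n f_{\lambda_n}$ lies in $C^k([0,1]^d,\R)$ with $D^j F=\sum_n D^j f_{\lambda_n}$.

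Next I would check that $F$ in fact lands in $\mathcal F^{d,k,\alpha}$, i.e.\ that $F$ is $[-1,1]$-valued, $\|F\|_{C^k}\le 1$, and $D^k F$ is $\alpha$-H\"older with constant $1$. The first two come from summing the bounds above: the normalizing constants $\tfrac{6}{\pi^2 n^2}$ (which satisfy $\sum_{n\ge1}\tfrac{6}{\pi^2 n^2}=1$), the factor $\|\phi\|_q^{-1}$ inside $\epsilon_n$, and the choice $R=\lfloor 5^{1/q}\rfloor+1$ (so $R^q\ge 5$) are arranged precisely so that $\|F\|_\infty\le\sum_n\|f_{\lambda_n}\|_\infty\le 1$ and, for each $0\le j\le k$, $\|D^j F\|_\infty\le\sum_n\|D^j f_{\lambda_n}\|_\infty\le 1$. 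The H\"older bound is the clean point: the optimal $\alpha$-H\"older constant $\|\cdot\|_q$ of the $k$-th derivative is a seminorm on $\mathcal F^{d,k,0}$, so $\|F\|_q\le\sum_n\|f_{\lambda_n}\|_q$, and by \cref{estime f nq} applied with $\lambda'=0$ one has $\|f_{\lambda_n}\|_q=\tfrac{6}{\pi^2 n^2}\|\lambda_n\|_\infty\le\tfrac{6}{\pi^2 n^2}$, whence $\|F\|_q\le\sum_n\tfrac{6}{\pi^2 n^2}=1$. Combining the three bounds gives $F\in\mathcal F^{d,k,\alpha}$.

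The main obstacle is not the convergence itself, which is a routine $M$-test once one notices that $q>k$ forces $\epsilon_n R^{nj}$ to decay geometrically; it is the bookkeeping of constants needed to land \emph{exactly} inside the unit-size class $\mathcal F^{d,k,\alpha}$ rather than a dilate of it. This is where the explicit value of $R$ and the precise shape of $\phi$ have to be used, in particular a quantitative lower bound on $\|\phi\|_q$ coming from the behaviour $\phi(t)=2^{2q}t^q(1-t)^q$ near $t=0$ (which yields $\limsup_{t\to0^+}|D^k\phi(t)|/t^\alpha>0$), so that the geometric sums $\sum_n\epsilon_n R^{nj}$ are genuinely $\le1$. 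I would isolate these inequalities as a short numerical lemma so that the rest of the argument stays transparent.
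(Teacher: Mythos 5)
Your argument is correct in substance, but its second half follows a genuinely different route from the paper's. For the membership of the limit in $\mathcal F^{d,k,\alpha}$ you differentiate the series term by term: you bound each $\|D^j f_{\lambda_n}\|_\infty$ by a chain-rule estimate of the form $\epsilon_n R^{nj}\|D^j g\|_\infty$, run the $M$-test on all $k+1$ derivative series, and then verify the three defining inequalities of $\mathcal F^{d,k,\alpha}$ for the limit directly. The paper never differentiates the limit: it establishes only normal convergence in $C^0$, checks that the \emph{partial sums} lie in $\mathcal F^{d,k,\alpha}$ (deriving all the bounds $\|D^l f_{\lambda_n}\|_\infty\le\|D^k f_{\lambda_n}\|_\infty\le\|f_{\lambda_n}\|_q$ from the single summable quantity $\|f_{\lambda_n}\|_q=\tfrac{6}{\pi^2 n^2}$ of \cref{estime f nq}, together with $D^l f_{\lambda_n}(0)=0$ and Taylor's formula), and then invokes the fact that $\mathcal F^{d,k,\alpha}$ is closed for the $C^0$-norm. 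That closedness (an Arzel\`a--Ascoli-type fact) does exactly the work your term-by-term differentiation does and spares the chain-rule bookkeeping; your version is more elementary and self-contained, at the price of the extra derivative estimates.

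One point to flag: the ``short numerical lemma'' you defer is not quite in the form you state it. What must be at most $1$ is $\sum_n\|D^j f_{\lambda_n}\|_\infty=\|D^j g\|_\infty\cdot\sum_n\epsilon_n R^{nj}$, so besides showing that the geometric sums are small you need, for every $j\le k$, an inequality of the type $\|D^j g\|_\infty\le\|\phi\|_q$; this does not follow from a lower bound on $\|\phi\|_q$ alone, and it is where the actual content of your deferred lemma lies. The paper's route through $D^l f_{\lambda_n}(0)=0$ and the H\"older/Taylor chain trades this for constants coming from the diameter of $[0,1]^d$ and the Taylor remainders, which it also does not track, so neither write-up fully nails the unit normalization. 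Since a fixed multiplicative adjustment of $\epsilon_n$ would change nothing in the order computation of \cref{lem diff}, this is harmless in both cases, but if you keep your route you should state and prove the comparison between $\|D^j g\|_\infty$ and $\|\phi\|_q$ explicitly rather than leave it as an unexamined claim about how the constants ``are arranged.''
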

\begin{proof} 
By the equality in \cref{norm f} the normal convergence of the series $ \sum f_{\lambda_n}  $ holds for the $ C^0$-norm as $ \sum \epsilon_n < + \infty$. Thus its limit $g$ is continuous. Let us show that $g$ indeed lies in $ \mathcal{F}^{d,k,\alpha}$.
First note that for any $ n \ge 1$ and for any $ 1 \le l \le k$ it holds $ D^l f_{\lambda_n} ( 0 ) = 0 $, thus by Taylor integral formula, it holds: 
\begin{equation} \label{maj dk}
 \| D^l f_{\lambda_n} \|_ \infty \le \|  D^k f_{\lambda_n} \|_ \infty \; .
\end{equation}  
Still by \cref{norm f}, it holds: 
\[  \sum_{n \ge 1  }  \| f_{\lambda_n} \|_{q}  \le \sum_{n \ge 1 } \frac{6}{\pi^2 n^2 } =  1  \; .  \]
Note moreover that as $ D^k f_{\lambda_n} ( 0) = 0$, it holds $ \| D^k f_{\lambda_n} \|_\infty  \le \| f_{\lambda_n} \|_q $ thus by \cref{estime f nq} it follows: 
\[  \sum_{n \ge 1  }  \| f_{\lambda_n} \|_{C^k}  \le \sum_{n \ge 1 }  \| f_{\lambda_n} \|_q  \le  1  \; .  \]
Consequently,  the partial sums of the series lie in $ \mathcal{F}^{d,k,\alpha} $ and so does $g$  as  $ \mathcal{F}^{d,k,\alpha} $ is closed for the $C^0$-norm.
\end{proof}
We will now finish the proof of the case $ \alpha > 0$ of \cref{bilip} using \cref{estime f nq,bound}.
First, by the above \cref{bound}, the following map is well defined:
\[I:  \underline \lambda=(\lambda_n)_{n\ge 1} \in (\Lambda , \delta)   \mapsto \lim_{n\rightarrow  + \infty}  \sum_{n \ge 1  }   f_{\lambda_n} \in  ( \mathcal{F}^{d,k,\alpha}, \| \cdot \|_\infty  )  \; .\]
To conclude the proof, it remains to show \cref{eq exp} of \cref{bilip}.
Consider $ \underline \lambda = ( \lambda_n)_{n \ge 1} $ and $ \underline{\lambda}'= ( \lambda'_n)_{n \ge 1}  \in \Lambda$. Denote $ k :=\nu(\underline \lambda, \underline{\lambda}') $ the first index such that the sequences $ \lambda $ and $ \lambda'$ differ. Then it holds:
\begin{equation} \label{eqi1} 
 \displaystyle \| I(\underline \lambda) - I ( \underline{\lambda}') \|_\infty  = \left\| \sum_{n  \ge k }  f_{\lambda_n } - f_{\lambda'_n } \right\|_\infty \ge  \| f_{\lambda_k} - f_{\lambda'_k} \|_\infty  - \sum_{n >  k} \| f_{\lambda_n} - f_{\lambda'_n} \|_\infty    \; .
\end{equation} 
Now \cref{estime f nq} provides: 
\begin{equation} \label{eqi2} 
 \| f_{\lambda_k} - f_{\lambda'_k} \|_\infty  =  \epsilon_{k} \qand  \sum_{n >  k} \| f_{\lambda_n} - f_{\lambda' _n} \|_\infty  \le  \sum_{n > k}  \epsilon_n   \; . 
\end{equation}
Note that \cref{eps choice} implies: 
\begin{equation}\label{eqi3} 
 \sum_{n > k}  \epsilon_n  \le \sum_{n > k}  \epsilon_k \cdot R^{-q ( n - k )} = \epsilon_k \cdot \frac{1}{R^q - 1} \; . 
\end{equation}
As $ R^q \ge 3$, it holds then $\frac{1}{R^q - 1}  \le \frac{1}{2} $ and consequently combining \cref{eqi1,eqi2,eqi3} leads to: 
   \[ \| I(\underline \lambda)-I(\underline \lambda') \|_\infty 
    \ge \tfrac12 \epsilon_{\nu(\underline \lambda,\underline \lambda')}  \; .\]
Since $ \epsilon_{\nu( \underline \lambda, \underline{\lambda}') } = \delta ( \underline \lambda, \underline{\lambda}' ) $ by definition of  $\delta$, the desired result comes. 
\end{proof} 
It remains to deduce the case $ \alpha = 0$ from that previous one.
For any $ \beta > 0$, it holds $ \mathcal{F}^{d,k,\beta} \subset \mathcal{F}^{d,k,0} $. From there, since Hausdorff scales are non decreasing for inclusion, it holds then $ \ord_H \mathcal{F}^{d,k,0} \ge \ord_H \mathcal{F}^{d,k, \beta } \ge \frac{d}{k + \beta}$. Since we can take $ \beta > 0$ arbitrarily small, it indeed holds $ \ord_H \mathcal{F}^{d,k,0} \ge \frac{d}{k }$.
\end{proof}
\bibliographystyle{alpha}
\bibliography{scales.bib} 
\end{document}